\documentclass[12pt,english]{amsart}
\usepackage[T1]{fontenc}
\usepackage[latin9]{inputenc}
\usepackage{xcolor}
\usepackage{babel}
\usepackage{units}
\usepackage{mathtools}
\usepackage{amstext}
\usepackage{amsthm}
\usepackage{amssymb}
\usepackage{graphicx}
\usepackage{geometry}
\PassOptionsToPackage{normalem}{ulem}
\usepackage{ulem}
\usepackage[pdfusetitle,
 bookmarks=true,bookmarksnumbered=false,bookmarksopen=false,
 breaklinks=false,pdfborder={0 0 1},backref=false,colorlinks=false]
 {hyperref}

\makeatletter

\providecolor{lyxadded}{rgb}{0,0,1}
\providecolor{lyxdeleted}{rgb}{1,0,0}
\DeclareRobustCommand{\mklyxadded}[1]{\textcolor{lyxadded}\bgroup#1\egroup}
\DeclareRobustCommand{\mklyxdeleted}[1]{\textcolor{lyxdeleted}\bgroup\mklyxsout{#1}\egroup}
\DeclareRobustCommand{\mklyxsout}[1]{\ifx\\#1\else\sout{#1}\fi}

\numberwithin{equation}{section}
\numberwithin{figure}{section}

\usepackage{babel}
\usepackage{color}
\providecommand{\definitionname}{Definition}
\providecommand{\examplename}{Example}
\providecommand{\lemmaname}{Lemma}
\providecommand{\propositionname}{Proposition}
\providecommand{\remarkname}{Remark}
\providecommand{\theoremname}{Theorem}
\definecolor{green}{RGB}{0, 180, 0}
\definecolor{cyan}{RGB}{0, 180, 180}
\definecolor{yellow}{RGB}{211,211,0}
\usepackage{upgreek}
\newcommand{\comment}[1]{}

\providecommand{\corollaryname}{Corollary}

\makeatother

\providecommand\theoremname{Theorem}
\theoremstyle{plain}
\newtheorem{thm}{\protect\theoremname}[section]
\providecommand\definitionname{Definition}
\theoremstyle{definition}
\newtheorem{defn}[thm]{\protect\definitionname}
\providecommand\examplename{Example}
\newtheorem{example}[thm]{\protect\examplename}
\providecommand\assumptionname{Assumption}
\theoremstyle{plain}
\newtheorem{assumption}[thm]{\protect\assumptionname}
\providecommand\remarkname{}
\theoremstyle{remark}
\newtheorem*{rem*}{\protect\remarkname}
\newtheorem{rem}[thm]{\protect\remarkname}
\providecommand\corollaryname{Corollary}
\theoremstyle{plain}
\newtheorem{cor}[thm]{\protect\corollaryname}
\providecommand\propositionname{Proposition}
\newtheorem{prop}[thm]{\protect\propositionname}
\providecommand\lemmaname{Lemma}
\newtheorem{lem}[thm]{\protect\lemmaname}
\begin{document}

\global\long\def\rmi{\mathbf{\textrm{i}}}%
\global\long\def\rme{\mathbf{\textrm{e}}}%
\global\long\def\rmd{\mathbf{\textrm{d}}}%
\global\long\def\id{\mathbf{1}}%
\global\long\def\d{\partial}%
\global\long\def\Z{\mathbb{Z}}%
\global\long\def\R{\mathbb{R}}%
\global\long\def\T{\mathbb{T}}%
\global\long\def\C{\mathbb{C}}%
\global\long\def\N{\mathbb{N}}%
\global\long\def\Q{\mathbb{Q}}%
\global\long\def\B{\mathcal{B}}%
\global\long\def\F{\mathcal{F}}%
\global\long\def\D{\mathcal{D}}%
\global\long\def\set#1#2{\left\{  #1~:~#2\right\}  }%
\global\long\def\undercom#1#2{\underset{_{#2}}{\underbrace{#1}}}%
\global\long\def\spec#1{\mathrm{Spec}\left(#1\right)}%
\global\long\def\br#1{\left(#1\right)}%
\global\long\def\bra#1{\left\langle #1\right\rangle }%


\global\long\def\E{\mathcal{E}}%
\global\long\def\Ev{\mathcal{E}_{v}}%
\global\long\def\V{\mathcal{V}}%

\global\long\def\P{P}%

\global\long\def\lmin{\mathbf{\textrm{\ensuremath{\ell_{\text{min}}}}}}%
\global\long\def\vp{v_{+}}%
\global\long\def\vm{v_{-}}%
\global\long\def\vpm{v_{\pm}}%
\global\long\def\del{\delta_{\alpha}}%
\global\long\def\A{\mathcal{A}}%

\global\long\def\a{\bar{a}}%

\global\long\def\sft{\mathrm{S}}%

\global\long\def\Ta{T_{a}}%

\global\long\def\T#1{\mathrm{T}_{#1}}%

\global\long\def\TG{\Gamma^{(n)}_{\omega}}%

\global\long\def\Gma{\Gamma_{a}}%
\global\long\def\va{v_{a}}%
\global\long\def\fe{f_{E}}%

\global\long\def\mnw{\mathcal{M}_{n}\left(\omega,E\right)}%
\global\long\def\mw{\mathcal{M}\left(\omega,E\right)}%
\global\long\def\No#1{N_{\Omega}\left(#1\right)}%
\global\long\def\ct#1{\#_{\omega}a\left(#1\right)}%

\global\long\def\mtw#1{\mathcal{M}\left(#1\omega,E\right)}%
\global\long\def\UH{\mathcal{UH}}%
\global\long\def\NUH{\mathcal{NUH}}%

\title{Zero measure Cantor spectrum\\
 for aperiodic metric tiling graphs}
\author{Ram Band \and Gilad Sofer}
\address{Department of Mathematics\\
Technion - Israel Institute of Technology\\
Haifa, Israel}
\begin{abstract}
We study a family of aperiodic metric graphs generated by one-dimensional
tilings. We prove that, generically, the spectrum of the associated
Kirchhoff Laplacian has Lebesgue measure zero and, apart from a discrete
set of isolated eigenvalues, is a generalized Cantor set. For a large
subclass of these graphs, we further show that isolated eigenvalues
are generically absent. This extends several well-known results from
the setting of discrete ergodic Schrödinger operators on $\Z$ to
a large family of metric graphs. Earlier results of this kind for
metric graphs typically assume equilateral edges and exploit symmetries
that reduce the analysis to a Sturm-Liouville operator on the real
line. By contrast, the graph families considered here need not have
equilateral edges or the symmetries needed for such a reduction and
therefore require a different analytical approach.
\end{abstract}

\maketitle

\section{Introduction and main results \label{sec: Intro and main results}}

This paper studies zero measure Cantor spectrum for aperiodic metric
graphs whose geometry is generated by one-dimensional symbolic dynamics.

The spectral theory of aperiodic Schrödinger operators has been a
central topic in mathematical physics since the 1970s. A widely studied
class of operators is discrete ergodic Schrödinger operators on $\Z$,
where even simple aperiodic models display nontrivial spectral properties.
One interesting feature shared by many aperiodic models is having
Cantor spectrum of zero Lebesgue measure and purely singular continuous
spectral measures. This occurs, for instance, for Sturmian Hamiltonians
\cite{Bellissard1989,Suetoe_cmp87} and other classes of low-complexity
discrete Schrödinger operators \cite{MR2229664,lima2003uniform,damanik2003uniform,bovier1993spectral,lenz2002singular}.
Analogous results are also known for ergodic continuum Schrödinger
operators on $\R$ \cite{DamFilGor_ahp15,LenSeiSto_jde14}.

The goal of this work is to study analogous results on networks, where
the aperiodicity manifests through the geometric structure of the
underlying space itself, rather than via a potential. The objects
we study here are continuum Schrödinger operators on metric (quantum)
graphs inspired by one-dimensional tilings, which we call \textit{tiling
graphs,} that provide a natural generalization of the usual models
on $\R$. An important work in this direction is \cite{DamFanSuk_ahp20},
where zero measure spectrum was proven for equilateral metric antitrees.\textit{
}Other related works include results on generic singular continuous
spectrum for radially symmetric metric trees \cite{Exner2014,breuer2009singular,Rohleder2017},
as well as Anderson localization for random radial tree models \cite{MR4081117}
and for quantum graphs generated by subshifts of finite type \cite{Safronov_amp2026}.
A common feature to these works is the presence of symmetries, which
effectively reduce the graph operator to a Sturm-Liouville operator
on $\R$. Our goal is to go beyond that, by studying aperiodic graphs
that do not necessarily possess an immediate symmetry which allows
for such a reduction. This could lay the foundation for further spectral
analysis of graphs with a more complicated geometric structure.

\begin{figure}
\includegraphics[scale=0.75]{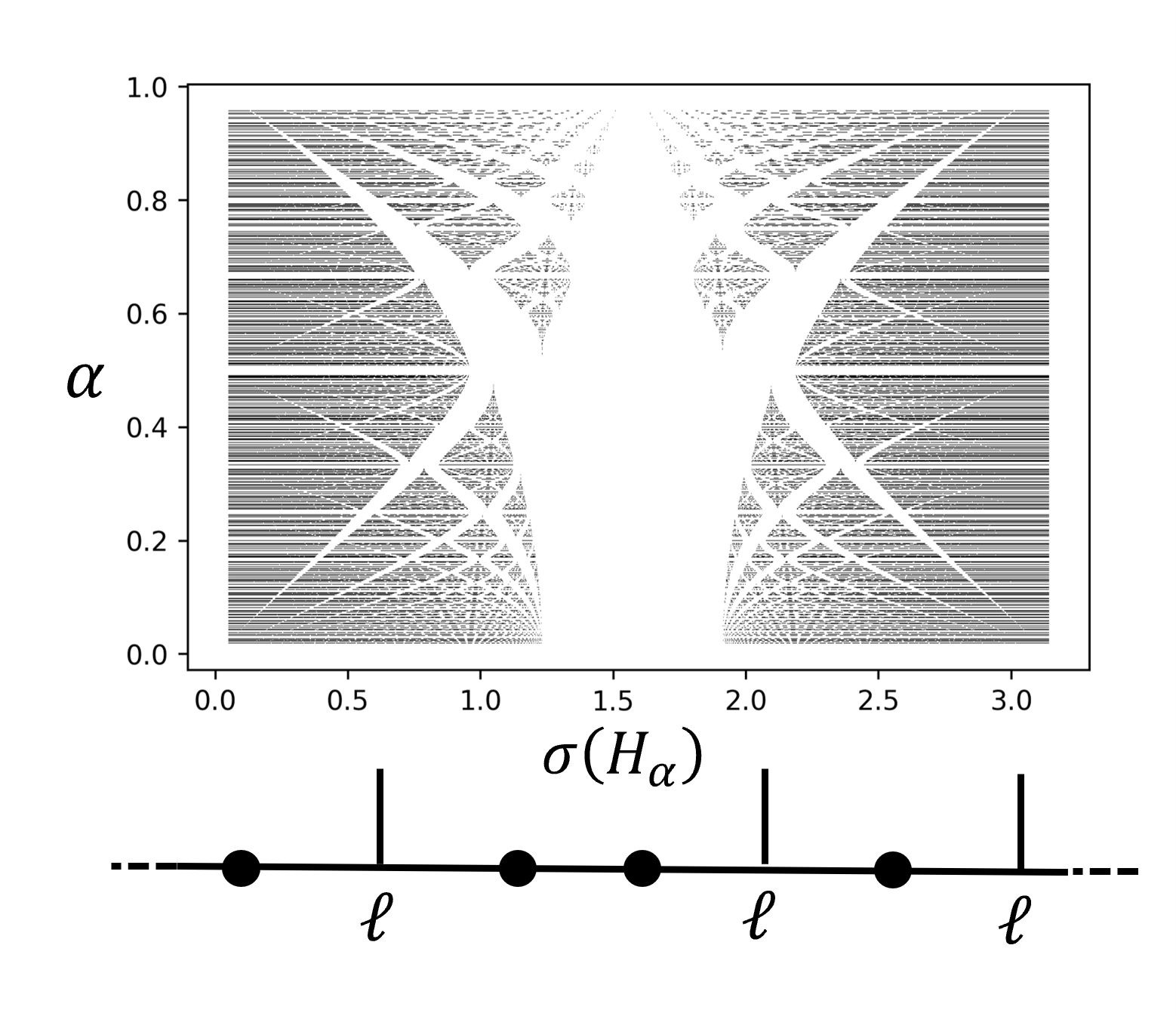}

\caption{The \textquotedblleft Kohmoto butterfly\textquotedblright{} for Sturmian
comb graphs -- parts of $\sigma\left(H_{\alpha}\right)$ as a function
of $\alpha$. The horizontal axis represents $k\in\protect\R$ such
that $k^{2}\in\sigma\left(H_{\alpha}\right)$. \label{fig: butterfly-1}}
\end{figure}

The graphs we study in this work are constructed as one-dimensional
tilings which are generated by minimal, aperiodic dynamical systems,
satisfying Boshernitzan's condition. Under an explicit and Baire-generic
assumption on the tiles forming the graph, we prove that the spectrum
of these graphs is of zero Lebesgue measure (Theorem \ref{thm:TMP}),
and in particular has no absolutely continuous part. After removing
a possible discrete set of isolated eigenvalues, the remaining spectrum
is shown to form a generalized Cantor set.

For a large class of tiling graphs, called decorated $\Z$-graphs,
the result can be strengthened. For a Baire-generic choice of the
edge lengths, the exceptional isolated eigenvalues are absent, and
the spectrum itself is a zero measure generalized Cantor set (Theorem
\ref{thm:Cantor}). In this setting, the exceptional energies are
associated with compactly supported eigenfunctions, which can be excluded
generically by perturbing the edge lengths. Figure \ref{fig: butterfly-1}
illustrates these results for Sturmian comb graphs (see Example~\ref{exa: Sturm-comb}
and Section~\ref{sec:Discussion-and-Example}). These graphs depend
on the Sturmian frequency, $\alpha$. Whenever this frequency is irrational,
the spectrum of the graph is a zero measure Cantor set, giving rise
to an intricate fractal pattern which resembles the Kohmoto butterfly
\cite{KohKadTan_prl83,OstKim_phys85}.

A key ingredient in the proof is a Borg-Marchenko type theorem for
tiling graphs (Theorem \ref{thm:BM}), showing that the underlying
aperiodic tiling can be reconstructed from the Weyl-Titchmarsh $m$-function
of the graph. Our proof is less standard than usual one-dimensional
Borg-Marchenko results, and relies on ideas from quantum chaos on
graphs. Namely, we use wave propagation along the graph to derive
a periodic-orbit expansion of the $m$-function, from which the successive
tiles can be recovered. This inverse-spectral result relates the graph
geometry to the more standard tools for ergodic one-dimensional operators,
namely Kotani theory.

The paper is structured as follows: after presenting the necessary
background and definitions in the next subsections, we state our main
results. Section \ref{sec:BM theorem} proves the Borg-Marchenko theorem
and develops the periodic-orbit expansion of the $m$-function. Section
\ref{sec:Kotani} introduces the transfer matrix and Lyapunov exponent
and uses those to give a dynamical characterization of the spectrum.
Section \ref{sec:Cantor spectrum proof} combines this with Kotani
theory and an oracle theorem to prove Theorems \ref{thm:TMP} and
\ref{thm:Cantor}. Lastly, Section \ref{sec:Discussion-and-Example}
discusses the particular family of Sturmian comb graphs.

\subsection{Quantum graphs\label{subsec:Quantum-graph-preliminaries}}

A \emph{metric graph} is a pair $\Gamma=(G,\vec{\ell})$, where $G=(\mathcal{V},\mathcal{E})$
is a combinatorial graph with vertex set $\mathcal{V}$ and edge set
$\mathcal{E}$, and $\vec{\ell}\in\mathbb{R}^{\left|\E\right|}_{+}$
is a vector assigning a positive length to each edge in $\mathcal{E}$.
Each edge $e\in\E$ is identified with the interval $\left[0,\ell_{e}\right]$,
which equips $\Gamma$ with the natural structure of a metric space.

For a vertex $v$, we write $e\sim v$ to indicate that the edge $e$
is incident to $v$. The degree of a vertex $v$, denoted by $\deg(v)$,
is defined as the number of edges incident to $v$, with loop edges
(edges that connect $v$ to itself) counted twice.

A \emph{quantum graph} is a metric graph $\Gamma$ equipped with a
self-adjoint differential operator $H$ acting on the Sobolev space
$H^{2}\left(\Gamma\right):=\oplus_{e\in\mathcal{E}}H^{2}\left(0,\ell_{e}\right)$.
In this work, the operator acts as the Laplacian on each edge $H=-\frac{d^{2}}{dx^{2}}$,
together with the \textit{Neumann-Kirchhoff} boundary conditions imposed
at each vertex:
\begin{align}
 & f|_{e}\left(v\right)=f|_{e'}\left(v\right),\ensuremath{\quad}\ensuremath{\forall e,e'\sim v},\label{eq:-15-2}\\
 & \sum_{e\sim v}f'|_{e}\left(v\right)=0,\label{eq:-16-2}
\end{align}
where the derivatives are taken in the outwards pointing direction
from the vertex. These vertex conditions are also known as Kirchhoff,
continuity-Kirchhoff, or standard vertex conditions. For an extensive
introduction to quantum graphs, see \cite{GnuSmi_ap06,BanGnu_qg-exerices18,BerKuc_graphs,Berkolaiko_qg-intro17,Kurasov_Book,KosNic_book2023}.

In this work, $\Gamma$ will be an infinite, connected metric graph,
where the vertex degrees $\deg\left(v\right)$ are uniformly bounded
from above, and the edge lengths $\ell_{e}$ are uniformly bounded
from above and also uniformly bounded from below by some $\ell_{0}>0$.
Under these conditions, the associated Kirchhoff Laplacian is self-adjoint
and non-negative, see \cite[thm 1.4.19]{BerKuc_graphs}.

\subsection{Dynamics\label{subsec:Dynamics}}

The aperiodic metric graphs we consider in this work are defined through
one-dimensional dynamical systems that govern their geometric structure.
We now introduce the relevant definitions.

Let $\mathcal{A}$ be a finite set, which we call an alphabet. We
consider the space of bi-infinite sequences $\mathcal{A}^{\mathbb{Z}}$,
equipped with the product topology, as induced by the following metric:
\begin{equation}
d\left(\omega,\omega'\right)=\sum_{n\in\Z}\frac{1-\delta_{\omega\left(n\right),\omega'\left(n\right)}}{2^{\left|n\right|}},\label{eq:disc-metric}
\end{equation}
where $\delta_{i,j}$ is the Kronecker delta. The space $\mathcal{A}^{\mathbb{Z}}$
is naturally equipped with the \textit{shift} map:
\begin{align}
 & \sft:\mathcal{A}^{\mathbb{Z}}\rightarrow\mathcal{A}^{\mathbb{Z}},\label{eq:shift}\\
 & \sft\omega\left(n\right)=\omega\left(n+1\right).\label{eq:shift2}
\end{align}

\begin{defn}
\label{def:Subshift}A \textit{subshift} is a closed, shift-invariant
subset $\Omega\subset\mathcal{A}^{\mathbb{Z}}$. A subshift is called
\textit{minimal} if it contains no proper nontrivial subshifts, and
is called \textit{aperiodic }if it contains no periodic elements,
i.e., an element $\omega$ such that $\sft^{p}\omega=\omega$ for
some $p\neq0$.
\end{defn}

Given a finite subword $W=W\left(1\right)...W\left(k\right)$ of an
element in $\Omega$, we define the corresponding \textit{cylinder
set} by
\begin{equation}
V_{W}=\left\{ \omega\in\Omega:\omega|_{\left[0,k-1\right]}=W\right\} .\label{eq:bosh1}
\end{equation}
If $\mu$ is a $\sft$-invariant probability measure on $\Omega$,
we define the quantity
\begin{equation}
\eta_{\mu}\left(n\right)=\min\left\{ \mu\left(V_{W}\right):\left|W\right|=n\right\} .\label{eq:bosh2}
\end{equation}
A minimal subshift $\Omega$ is said to satisfy \textit{Boshernitzan's
condition} if there exists an ergodic $S$-invariant probability measure
$\mu$ on $\Omega$ such that
\begin{equation}
\limsup_{n\rightarrow\infty}n\cdot\eta_{\mu}\left(n\right)>0.\label{eq:Boshernitzan}
\end{equation}
This condition, introduced by Boshernitzan in \cite{MR808101,MR801288},
is a quantitative recurrence condition for an aperiodic subshift.
It holds for a wide variety of subshifts, as described in \cite{MR2229664}.
For minimal subshifts, Boshernitzan\textquoteright s condition implies
uniformity for locally constant cocycles, as discussed in Section
\ref{sec:Kotani}. This is the main role of this condition in our
analysis.
\begin{example}
\label{exa:Sturmian}Let $\alpha\in\left(0,1\right)\backslash\mathbb{Q}$.
For $\theta\in[0,1)$, we define the Sturmian sequence $\left(\omega_{\alpha,\theta}\left(n\right)\right)_{n\in\Z}$
on the alphabet $\A=\{0,1\}$ by
\begin{equation}
\omega_{\alpha,\theta}\left(n\right)=\chi_{(1-\alpha,1]}\left(n\alpha+\theta\text{ mod \ensuremath{1}}\right).\label{eq:sturm}
\end{equation}
 The Sturmian subshift is then defined as
\begin{equation}
\Omega_{\alpha}=\overline{\left\{ \omega_{\alpha,\theta}:\theta\in[0,1)\right\} }.\label{eq:Sturm-subshift}
\end{equation}
$\Omega_{\alpha}$ is a minimal, aperiodic subshift over the alphabet
$\left\{ 0,1\right\} $, and it satisfies Boshernitzan's condition.
The most famous example is the Fibonacci subshift, obtained for $\alpha=\frac{\sqrt{5}-1}{2}$
(see \cite{DamGorYes_inv16}).
\end{example}

\subsection{Tiling graphs\label{subsec:Metric-tiling-graphs}}

We now introduce the class of \textit{aperiodic tiling graphs}, which
are the center of this paper. To define those, we fix a minimal, aperiodic
subshift $\Omega$ over a finite alphabet $\mathcal{A}$, satisfying
Boshernitzan's condition.

Let $\left(\Ta\right)_{a\in\mathcal{A}}$ be a family of compact metric
graphs, which we call the \textit{tiles}. For each $\Ta$, we select
two distinct vertices $v^{a}_{1},v^{a}_{2}\in\V\left(\Ta\right)$.
Given $\omega\in\Omega$, construct an infinite metric graph $\Gamma_{\omega}$
in the following way. Form a chain of graphs $\left(\TG\right)_{n\in\Z}$,
such that
\begin{equation}
\TG=\T{\omega(n)},\label{eq:tiling-n}
\end{equation}
and the graphs $\TG,\Gamma^{\left(n+1\right)}_{\omega}$ are connected
to each other by identifying the vertices $v^{\omega\left(n\right)}_{2}\in\V\left(\TG\right)$
and $v^{\omega\left(n+1\right)}_{1}\in\V\left(\Gamma^{\left(n+1\right)}_{\omega}\right)$
(see Figure \ref{fig: TilingGraphs}). This results in an infinite
graph which we denote by $\Gamma_{\omega}$. Associate to each such
graph an origin $o\left(\Gamma_{\omega}\right)\in\Gamma_{\omega}$,
which is taken to be the vertex $v^{\omega\left(0\right)}_{1}\in\V\left(\Gamma^{\left(0\right)}_{\omega}\right)$.

\begin{figure}
\includegraphics[scale=0.5]{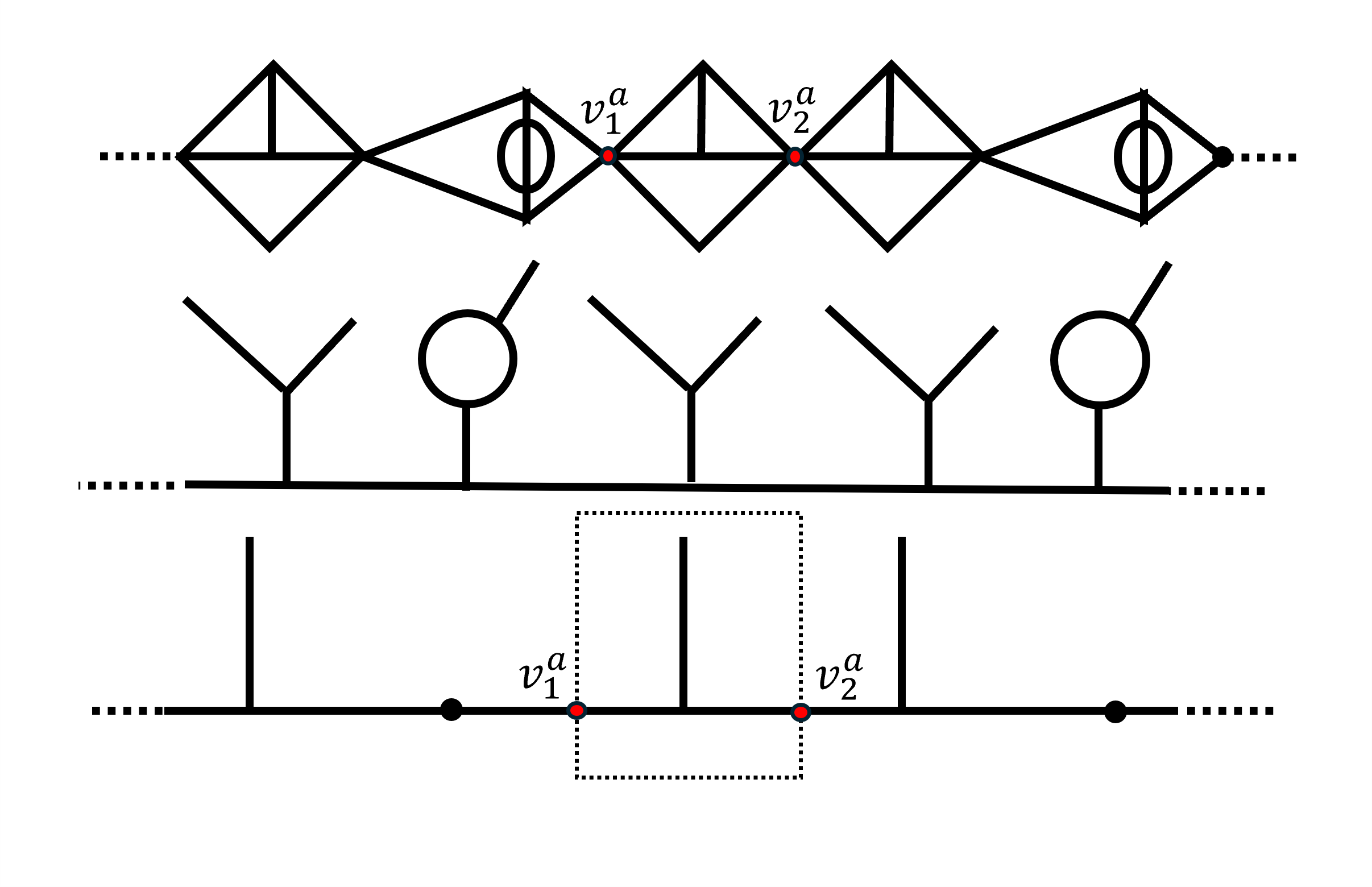}

\caption{Three examples of tiling graphs, where the bottom two graphs are decorated
$\protect\Z$-graphs. For the top and bottom graph the attachment
vertices $v^{a}_{1}$ and $v^{a}_{2}$ of the central tile are marked
in red.\label{fig: TilingGraphs}}
\end{figure}

One important family of tiling graphs on which we also focus is decorated
$\Z$-graphs:
\begin{defn}[Decorated $\Z$-graph]
\label{def: decorated lines} Let $\left\{ \left(\D_{a},v_{a}\right)\right\} _{a\in\A}$
be a set of compact graphs (called \textit{decorations}) with marked
vertices $v_{a}\in\D_{a}$ (called the \textit{base vertices}). Given
a subshift $\Omega$ over the alphabet $\A$, we can naturally construct
a tiling graph $\Gamma_{\omega}$ by taking the line graph over $\mathbb{Z}$,
and for each $a\in\A$, attaching a copy of $\D_{a}$ through the
base vertex $v_{a}$ at all integers $n$ such that $\omega\left(n\right)=a$.
We call such a $\Gamma_{\omega}$ a decorated $\Z$-graph. It is indeed
a particular case of a tiling graph, where the tiles $\left(\Ta\right)_{a\in\A}$
are the graphs $\left(\D_{a}\right)_{a\in\A}$, with two dangling
edges of size $\nicefrac{1}{2}$ attached at the base $v_{a}\in\D_{a}$.
Decorated $\Z$-graphs are also studied in \cite{BanSof_GLT2026,Bandb},
where we develop a suitable gap labelling theory for them and solve
the corresponding dry ten Martini problem.
\end{defn}

\begin{example}[Sturmian comb]
\label{exa: Sturm-comb} A particular family of decorated $\Z$-graphs
is obtained by taking the Sturmian subshift $\Omega_{\alpha}$ over
the alphabet $\left\{ 0,1\right\} $ from Example \ref{exa:Sturmian},
and choosing the first decoration to be a dangling edge of length
$\ell$, and the second decoration to be a single vertex (see bottom
of Figure \ref{fig: TilingGraphs}). We call such graphs Sturmian
combs and demonstrate in Section \ref{sec:Discussion-and-Example}
how our results apply for this family of graphs.
\end{example}

The family of graphs $\Gamma_{\Omega}:=\left(\Gamma_{\omega}\right)_{\omega\in\Omega}$
is equipped with a naturally induced shift, 
\begin{align}
 & \sft:\Gamma_{\Omega}\rightarrow\Gamma_{\Omega},\label{eq:randomgraph1}\\
 & \sft\Gamma_{\omega}=\Gamma_{\sft\omega},\label{eq:randomgraph2}
\end{align}
where we allowed ourselves to abuse the shift notation $\sft$. Equipping
the tiling graphs $\Gamma_{\omega}\in\Gamma_{\Omega}$ with the Kirchhoff
Laplacian $H_{\omega}$, one can consider the family $H_{\Omega}:=\left(H_{\omega}\right)_{\omega\in\Omega}$
as a dynamical system of operators. Denoting by $\sft^{\sharp}:C(\Gamma_{S\omega})\rightarrow C(\Gamma_{\omega})$
the pullback of $\sft$, it is easy to check that the family $H_{\Omega}$
is covariant, i.e.,
\begin{equation}
H_{\sft\omega}=\left(\sft^{\sharp}\right)^{-1}H_{\omega}\sft^{\sharp},\,\forall\omega\in\Omega,\label{eq:covariant}
\end{equation}
which means that the operators $H_{\omega}$ and $H_{\sft\omega}$
are unitarily equivalent. As we shall see in Theorem \ref{thm:TMP},
minimality of $\Omega$ implies that $\sigma\left(H_{\omega}\right)$
is in fact independent of $\omega\in\Omega$, and so we can simply
denote it by $\sigma\left(H_{\Omega}\right)$.

Throughout the paper, we make the following technical assumption about
the tiles:
\begin{assumption}
\label{assu:no-deg-2}There is at most one tile in $\left(\Ta\right)_{a\in\mathcal{A}}$
in which $v^{1}_{a}$ and $v^{2}_{a}$ are connected by a single edge.
In particular, at most one tile is an interval. Furthermore, no tile
has degree-$2$ vertices, except for possibly the boundary vertices
$v^{a}_{1}$ and $v^{a}_{2}$.

The following assumptions are about the tile edge lengths:
\begin{enumerate}
\item Assume that for each $a\in\A$, the quantity
\begin{equation}
\ell^{a}_{\min}:=\min\set{\ell_{e}}{e\sim v^{1}_{a}}\label{eq:min-l}
\end{equation}
is realized by a unique edge, and that $\ell^{a}_{\min}\neq\ell^{a'}_{\min}$
for all $a\neq a'$.
\item Denote 
\[
\ell^{a}_{\textrm{left}}:=\begin{cases}
\ell^{a}_{\min} & \deg(v^{1}_{a})=1\\
0 & \textrm{else}
\end{cases}.
\]
If one of the tiles is an interval, denote it by $T_{\overline{a}}$
and its length by $\ell^{\overline{a}}$. Assume that for all $a',a''\in\A\backslash\{\overline{a}\}$,
the equality $\ell^{a'}_{\textrm{left}}\equiv\ell^{a''}_{\textrm{left}}\mod{\ell^{\overline{a}}}$
implies that $\ell^{a'}_{\textrm{left}}=\ell^{a''}_{\textrm{left}}$.
\\
If no tile is an interval, but there is a tile in which the boundary
vertices are connected by a single edge, denote it by $T_{\overline{a}}$
and the edge length connecting $v^{1}_{\a}$ and $v^{2}_{\a}$ by
$\ell^{\overline{a}}$. Assume that for all $a',a''\in\A\backslash\{\overline{a}\}$,
$\ell^{a'}_{\textrm{left}}-\ell^{a''}_{\textrm{left}}\neq\ell^{\overline{a}}$.
\end{enumerate}
\end{assumption}

\begin{rem*}
The assumption of excluding vertices of degree $2$ is a common one
when considering the Kirchhoff Laplacian. Indeed, a degree-$2$ vertex
with Kirchhoff conditions may be removed by replacing its two incident
edges with a single edge whose length is the sum of their lengths.
This does not affect the Laplacian, up to unitary equivalence. All
the assumptions on the edge lengths included in Assumption~\ref{assu:no-deg-2}
may be replaced by the simple (yet stronger) assumption that there
are no nontrivial integer linear relations among the tile edge lengths.
We prefer not to use this stronger assumption, as it excludes some
graph families, such as the decorated graphs, from the forthcoming
results.
\end{rem*}
\begin{thm}
\label{thm:TMP}Let $\Omega$ be a minimal aperiodic subshift satisfying
Boshernitzan's condition. Let $\left(\Gamma_{\omega}\right)_{\omega\in\Omega}$
be an associated family of metric tiling graphs, with tiles satisfying
Assumption \ref{assu:no-deg-2}. Then:

1. There exists a closed set of zero Lebesgue measure $\sigma\left(H_{\Omega}\right)\subset\R$
such that $\sigma\left(H_{\omega}\right)=\sigma\left(H_{\Omega}\right)$
for all $\omega\in\Omega$. In particular, the absolutely continuous
spectrum $\sigma_{ac}\left(H_{\omega}\right)$ is empty for all $\omega\in\Omega$.

2. There exists a (possibly empty) set of isolated eigenvalues $F\subset\R$
such that $\sigma\left(H_{\Omega}\right)\backslash F$ is a generalized
Cantor set.
\end{thm}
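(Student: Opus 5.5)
The plan follows the standard route for discrete Schrödinger operators generated by Boshernitzan subshifts (Damanik--Lenz): transfer matrices, uniformity of locally constant cocycles, Kotani theory, and a Borg--Marchenko-type determinism statement. The graph-specific content goes into the transfer matrix and into the inverse result.

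\textbf{Step 1: transfer matrices and the cocycle.} For each tile $\T a$ and energy $E=k^2$, I would define a $2\times 2$ transfer matrix $A_a(E)$. It maps the pair (value, outgoing derivative sum) of a solution at $v^a_1$ to the same pair at $v^a_2$; equivalently it encodes the Dirichlet-to-Neumann map of the tile relative to its two boundary vertices. This matrix is well defined and $SL(2,\R)$-valued away from a discrete exceptional set $D_a$: the energies that are Dirichlet eigenvalues of $\T a$ with $v^a_1,v^a_2$ clamped, plus those where the relevant boundary coupling vanishes. At such energies there are compactly supported eigenfunctions living inside copies of a tile. I would collect these into a discrete set $F_0=\bigcup_a D_a$.

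For $E\notin F_0$, the sequence $\omega\mapsto A_{\omega(0)}(E)$ is a locally constant $SL(2,\R)$ cocycle over $(\Omega,\sft)$, and generalized eigenfunctions of $H_\omega$ correspond to cocycle orbits. Minimality and strong convergence of shifted operators give that $\sigma(H_\omega)$ is independent of $\omega$. Boshernitzan's condition gives uniformity of the Lyapunov exponent $L(E)$ for every $E$, via Damanik--Lenz, which is available for locally constant cocycles. From uniformity I get the Johnson-type characterization
\[
\sigma(H_\Omega)\setminus F_0=\{E\notin F_0: L(E)=0\}=\{E\notin F_0:\text{the cocycle is not uniformly hyperbolic}\},
\]
so the spectrum off $F_0$ equals the zero set $\mathcal Z$ of $L$.

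\textbf{Step 2: zero measure via Kotani theory.} Kotani theory, adapted to the graph setting through the Weyl--Titchmarsh $m$-functions at the origin $o(\Gamma_\omega)$, says the following. If $\mathrm{Leb}(\mathcal Z)>0$, then for $\mu$-a.e. $\omega$ the half-line $m$-functions $m_\pm(\omega,\cdot)$ are reflectionless on $\mathcal Z$. Their boundary values on $\mathcal Z$ then determine them, so $m_+(\omega)$ is determined by the restriction of $\omega$ to the left half-line. Theorem \ref{thm:BM} says $m_+$ determines the right half of $\omega$ tile by tile; this is where Assumption \ref{assu:no-deg-2} enters, since it makes the first return lengths $\ell^a_{\min}$ in the periodic-orbit expansion distinguish the tiles. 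Combining the two, the right half of $\omega$ is a measurable function of the left half, i.e. $\mu$ is deterministic. Kotani's oracle argument, or directly the Damanik--Lenz/Kotani result that a minimal aperiodic subshift with finitely valued cocycles cannot have positive-measure $\mathcal Z$, then forces $\omega$ to be eventually periodic, contradicting aperiodicity. Hence $\mathrm{Leb}(\mathcal Z)=0$, and since $F_0$ is countable, $\mathrm{Leb}(\sigma(H_\Omega))=0$ and $\sigma_{ac}=\emptyset$.

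\textbf{Step 3: Cantor structure.} $\sigma(H_\Omega)$ is closed and, by Step 2, has empty interior. I would let $F$ be the set of points of $F_0\cap\sigma(H_\Omega)$ that are isolated in the spectrum. Off $F_0$ there are no isolated points: an isolated point $E$ of $\mathcal Z$ would be a uniformly hyperbolic energy with an $\ell^2$ eigenfunction for every $\omega$, which contradicts minimality and aperiodicity by the standard argument that the spectrum of a minimal aperiodic family carries no eigenvalue of infinite multiplicity isolated from the rest. Removing the remaining isolated points in $F_0$ leaves a closed, nowhere dense set without isolated points, i.e. a generalized Cantor set.

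\textbf{Main obstacle.} I expect the hardest step to be the adaptation of Kotani's determinism to graphs. One must show that reflectionlessness of the graph $m$-functions on a positive-measure set forces determinism of the tiling. This requires that Theorem \ref{thm:BM} be applied to $m$-functions reconstructed from boundary values, with the exceptional energies in $F_0$ handled carefully, since the cocycle is only defined off $F_0$. I would first try to work on a real interval free of $F_0$ and use analyticity of the $m$-functions in $E$ to transport the identification to the whole upper half-plane.
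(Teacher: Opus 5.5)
Your outline follows the paper's architecture: a cocycle off a discrete set, Boshernitzan $\Rightarrow$ uniformity $\Rightarrow$ $\sigma(H_\Omega)\setminus F_0=\mathcal Z$, then Kotani reflectionlessness plus Theorem \ref{thm:BM}. However, Step 2 has a genuine gap at the passage from determinism to periodicity.

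Knowing that the right half of $\omega$ is a \emph{measurable} function of the left half can never produce a contradiction here. Boshernitzan's condition gives $c>0$ with at most $n/c$ words of length $n$ for infinitely many $n$. So $\Omega$ has zero entropy and is already deterministic in exactly this sense. The black box you cite also does not hold in that form. For a general finitely valued cocycle over a minimal aperiodic subshift, $|\mathcal Z|>0$ is possible: if two letters carried the same transfer matrix, the cocycle would be constant and $\mathcal Z$ would contain bands. Kotani's theorem concerns finitely valued Schr\"odinger potentials, and its proof runs through a continuity/compactness argument that you never set up for graphs. Naming ``Kotani's oracle argument'' is not enough.

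The paper supplies the missing ingredients:
\begin{enumerate}
\item Continuity of the correspondence $\omega\leftrightarrow m^\pm_\omega$ with respect to locally uniform convergence (Corollary \ref{cor:loc-uni}). This comes from the TPO expansion and goes beyond the injectivity in Theorem \ref{thm:BM}.
\item Closedness of $\D(\mathcal Z)$, which follows from (1), so $\D(\mathcal Z)$ is compact. Hence the oracle map $\mathcal R_-(\omega)\mapsto\mathcal R_+(\omega)$ is \emph{uniformly} continuous (Theorem \ref{thm:oracle}).
\item Shift-invariance of $\D(\mathcal Z)$ (Lemma \ref{lem:shift-invariant}).
\end{enumerate}
With a finite alphabet, uniform continuity gives a fixed $k$ such that $\omega(-k),\dots,\omega(-1)$ determine $\omega(0)$. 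Shifting, this window determines all of $\omega$, so $\#\D(\mathcal Z)\le|\A|^k$ and every element of $\D(\mathcal Z)$ is periodic. The point you flag as the main obstacle is not one. Herglotz functions are determined by their boundary values on any set of positive measure, and $F_0$ is countable.

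Step 3 is also argued incorrectly. A point of $\mathcal Z$ is by definition not uniformly hyperbolic. The principle you invoke, that a minimal aperiodic family has no isolated eigenvalue of infinite multiplicity, is false in this setting. Section \ref{sec:Discussion-and-Example} notes that the Sturmian comb has exactly such eigenvalues at points of $\B_{\ell}$ when Assumption \ref{assu:bad-energies} fails.

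The correct argument runs the other way. An ergodic family has no discrete spectrum, so an isolated point of $\sigma(H_\Omega)$ must be an eigenvalue of infinite multiplicity. But for $E\notin F_0$ every solution is determined by its vector $\Phi_\omega\in\mathbb{C}^2$, so the multiplicity is at most $2$. Alternatively, a Combes--Thomas bound yields an eigenfunction of an isolated eigenvalue that decays exponentially in both directions. Off $F_0$ this forces $\|\mathcal{M}_n(\omega,E)\|$ to grow exponentially, contradicting $L_\Omega(E)=0$ and uniformity. Either way, isolated points can lie only in $F_0\cap\sigma(H_\Omega)$, which is how the paper identifies $F$.
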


\begin{rem}
The fact that $\sigma\left(H_{\omega}\right)$ is independent of $\omega$
is standard for minimal dynamical systems, and does not require Assumption~\ref{assu:no-deg-2}
or Boshernitzan's condition.
\end{rem}

By \textit{a generalized} Cantor set, we mean a closed, nowhere-dense
subset of $\R$ with no isolated points (relaxing the usual condition
of compactness). A precise classification of the set $F$ will be
given in Section \ref{sec:Cantor spectrum proof} (see Remark \ref{rem:thm1.7.4.5}).

Consider the set of all edge lengths $\mathbb{R}^{n}_{+}$ and recall
that a subset $A\subset\mathbb{R}^{n}_{+}$ is called Baire-generic
if it contains a countable intersection of dense open subsets. Noting
that the additional condition on the tile edge lengths in Assumption
\ref{assu:no-deg-2} is satisfied for a Baire-generic and Lebesgue
almost-sure choice of the lengths, we immediately deduce the following:
\begin{cor}
\label{cor: generic_tiling_graphs}For a Baire-generic and Lebesgue
almost-sure choice of the tile edge lengths, the resulting tiling
graph satisfies that $\sigma\left(H_{\Omega}\right)$ has Lebesgue
measure zero and, apart from a discrete set of isolated eigenvalues,
forms a generalized Cantor set.
\end{cor}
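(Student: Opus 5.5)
The statement to prove is Corollary~\ref{cor: generic_tiling_graphs}. It follows from Theorem~\ref{thm:TMP} once we show that the edge-length hypotheses in Assumption~\ref{assu:no-deg-2} hold for a Baire-generic and Lebesgue-full set of length vectors. The combinatorial parts of the assumption (at most one tile with $v^{a}_{1},v^{a}_{2}$ joined by a single edge, no degree-$2$ interior vertices) do not involve $\vec{\ell}$. I therefore fix the combinatorial tiles $(T_a)_{a\in\A}$ satisfying them and regard $\vec{\ell}\in\R^{n}_{+}$ as the only free parameter, where $n$ is the total number of tile edges.

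The plan is to express the bad set as a finite union of pieces of hyperplanes. The conditions in items (1) and (2) fail only if some equation from the following finite list holds:
\begin{itemize}
\item[(i)] $\ell_e=\ell_{e'}$ for distinct edges $e,e'\sim v^{a}_{1}$ (this breaks uniqueness of the minimizer);
\item[(ii)] $\ell_e=\ell_{e'}$ for $e\sim v^{a}_{1}$ and $e'\sim v^{a'}_{1}$ with $a\neq a'$ (this breaks $\ell^{a}_{\min}\neq\ell^{a'}_{\min}$);
\item[(iii)] in the interval case, $\ell^{a'}_{\mathrm{left}}-\ell^{a''}_{\mathrm{left}}=k\,\ell^{\overline{a}}$ for some $k\in\Z\setminus\{0\}$, with each of $\ell^{a'}_{\mathrm{left}},\ell^{a''}_{\mathrm{left}}$ equal either to $0$ or to the length of an edge at $v^{a'}_{1}$, resp.\ $v^{a''}_{1}$;
\item[(iv)] in the non-interval case, $\ell^{a'}_{\mathrm{left}}-\ell^{a''}_{\mathrm{left}}=\ell^{\overline{a}}$.
\end{itemize}
Each such equation is a nontrivial linear relation $\sum_e c_e\ell_e=0$ with integer coefficients. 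It is nontrivial because the edges appearing are distinct, and the interval edge of $T_{\overline{a}}$ (or the connecting edge) is different from the left edges of the other tiles $a',a''\neq\overline{a}$.

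There is one subtlety. In (iii) the case $\ell^{a'}_{\mathrm{left}}=\ell^{a''}_{\mathrm{left}}$ is allowed by the assumption, so only $k\neq0$ needs to be excluded. When both are $0$ the equation forces $k=0$, and when exactly one is $0$ the relation still has a nonzero coefficient, so the hyperplane is proper.

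There are countably many such relations, one for each choice of edges, tiles and $k$. Each defines a closed hyperplane $\{c\cdot\vec{\ell}=0\}$ that is nowhere dense and Lebesgue-null in $\R^{n}_{+}$. The good set is the complement of this countable union: it is a countable intersection of open dense sets, hence Baire-generic, and its complement is null. On the good set Assumption~\ref{assu:no-deg-2} holds, and Theorem~\ref{thm:TMP} gives zero measure spectrum which, apart from the discrete set $F$ of isolated eigenvalues, is a generalized Cantor set.

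The only step requiring care is checking that every listed relation is genuinely nontrivial. This is bookkeeping over which edges can coincide. As a fallback, the Remark after Assumption~\ref{assu:no-deg-2} notes that rational independence of all edge lengths implies the length conditions, and rational independence visibly holds off a countable union of proper hyperplanes.
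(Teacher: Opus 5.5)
Your proposal is correct and follows the paper's route. The paper simply notes that the edge-length conditions of Assumption~\ref{assu:no-deg-2} hold outside a countable union of proper hyperplanes, hence Baire-generically and Lebesgue-almost surely, and then applies Theorem~\ref{thm:TMP}; your case analysis of items (1)--(2) spells out that one-line observation.
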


We further show that when restricting to the important class of decorated
$\Z$-graphs (see Definition \ref{def: decorated lines}), the last
statement can be strengthened.
\begin{thm}
\label{thm:Cantor}Let $\left(\Omega,\sft\right)$ be a minimal aperiodic
subshift satisfying Boshernitzan's condition, and let $\left(\Gamma_{\omega}\right)_{\omega\in\Omega}$
be an associated family of decorated $\Z$-graphs. Assume that the
decorations do not contain loop edges (i.e., an edge from a vertex
to itself). Then, for a Baire-generic choice of the decoration edge
lengths, $\sigma\left(H_{\Omega}\right)\subset\R$ is a generalized
Cantor set of zero Lebesgue measure.
\end{thm}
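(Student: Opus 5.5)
The plan is to derive Theorem~\ref{thm:Cantor} from Theorem~\ref{thm:TMP}. Two things are needed: (i) for a Baire-generic choice of decoration edge lengths, Assumption~\ref{assu:no-deg-2} holds for the tiles of the decorated $\Z$-graph; (ii) generically the exceptional set $F$ of isolated eigenvalues is empty.

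For (i), the tiles are $\D_a$ with two dangling half-edges attached at $v_a$. The boundary vertices $v^{a}_{1},v^{a}_{2}$ are the endpoints of these half-edges and have degree one. So no tile joins its boundary vertices by a single edge, except possibly the tile with trivial decoration, which is an interval of length $1$. Hence the tile $T_{\overline{a}}$ in Assumption~\ref{assu:no-deg-2} is at most this interval. Further, $\ell^{a}_{\min}=\nicefrac12$ is realized by a unique edge in every tile. This does not immediately give $\ell^{a}_{\min}\neq\ell^{a'}_{\min}$. I expect the intended reading is up to the degree-2 merging in the remark after Assumption~\ref{assu:no-deg-2}: when the decoration edge adjacent to $v_a$ meets no branching, the half-edge merges with it. In that case, and after deleting degree-$2$ vertices inside the decorations, the resulting conditions are finitely many inequalities and non-congruences between integer linear combinations of decoration lengths. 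Each one fails only on a closed nowhere-dense set (a countable union of hyperplanes), so the admissible set is Baire-generic. I would check this carefully against how the paper treats decorated graphs in Section~\ref{sec:BM theorem}, and if necessary invoke the Borg--Marchenko theorem (Theorem~\ref{thm:BM}) directly for decorated graphs.

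For (ii) I would use Remark~\ref{rem:thm1.7.4.5}, which classifies $F$. My expectation is that an isolated point of $\sigma(H_\Omega)$ must be an eigenvalue of infinite multiplicity that arises from compactly supported eigenfunctions. Kotani theory plus Boshernitzan uniformity makes the Lyapunov exponent positive away from such energies, so the only way isolated points appear is through locally supported states. In a decorated $\Z$-graph, a compactly supported eigenfunction has two possible sources:
\begin{itemize}
\item An eigenfunction of some $\D_a$ with Dirichlet condition at $v_a$ and vanishing derivative sum at $v_a$. Such a function is supported in one decoration.
\item A function supported on a finite stretch of the line together with decorations. On the line segments, $\sin$ solutions must vanish at both ends of a finite window.
\end{itemize}
The second case forces $k\in\pi\Z$ on unit edges, since every $\Z$-edge has length $1$, together with matching conditions at the decorations.

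The main obstacle is the second case, because the $\Z$-edges have fixed length $1$ and cannot be perturbed. At $k=n\pi$, the values at the integer vertices of a function supported on the line satisfy a three-term recursion whose coefficients involve the decoration $m$-functions $m_a(k)$. I would show that for generic decoration lengths, $m_a(n\pi)$ is finite and nonzero for all $a$ and all $n$. Then the recursion forces the solution to vanish identically once it vanishes at two consecutive vertices, so there is no compact support. For the first case, I would show that generic lengths leave $\D_a$ without eigenfunctions vanishing at $v_a$ with zero total flux there. This uses the known genericity result that generic-length graphs have simple spectrum with eigenfunctions nonvanishing at vertices, applied to the Dirichlet-at-$v_a$ problem on $\D_a$. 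Here the no-loop hypothesis matters: a loop always carries a $\sin$ eigenfunction vanishing at its vertex, for every length. Each of these conditions fails only on a countable union of nowhere-dense analytic subvarieties of length space, so intersecting them with (i) gives a Baire-generic set on which $F=\emptyset$. Theorem~\ref{thm:TMP} then yields that $\sigma(H_\Omega)$ is a zero-measure generalized Cantor set.
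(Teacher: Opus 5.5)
Your proposal has two genuine gaps: step (i) cannot be carried out as described, and the case analysis in step (ii) misses a family of compactly supported eigenfunctions.

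\textbf{Step (i).} Theorem~\ref{thm:TMP} cannot be used as a black box. As you observe, in every tile $\ell^{a}_{\min}=\nicefrac{1}{2}$ is realized by the dangling half-edge, so condition (1) of Assumption~\ref{assu:no-deg-2} fails for every choice of lengths. Merging degree-$2$ vertices does not repair this. The two half-edges meet $\D_a$ at $v_a$, which has degree $\deg_{\D_a}(v_a)+2\ge 3$ for a nontrivial decoration. The only degree-$2$ points are therefore the glued boundary vertices at the midpoints of the unit edges. For the same reason Theorem~\ref{thm:BM} cannot be invoked directly for decorated graphs. What is needed is a Borg--Marchenko statement in which the tiles are distinguished by $\min\{\ell_e : e\sim v_a\}$, taken over the decoration edges at the base vertex. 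This is Proposition~\ref{prop:BM-decorated}, generic by Corollary~\ref{cor:generic-BM-decorated}. The paper then reruns the Kotani/oracle proof of Theorem~\ref{thm:Weaker-generalize-Cantor} with this input. That rerun gives zero measure and places all isolated points in $\B$. It is this, not a positivity statement for the Lyapunov exponent, that reduces (ii) to energies where the cocycle is undefined.

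\textbf{Step (ii).} The claim that a compactly supported eigenfunction which is nonzero on the line forces $k\in\pi\Z$ is false, and your genericity conditions do not exclude such eigenfunctions. The vertex recursion propagates only through sites whose decoration does not have $k^2$ as a Dirichlet eigenvalue (Dirichlet at the base). If $k^2$ is a Dirichlet eigenvalue of some $\D_b$, an eigenfunction can vanish at two $b$-sites and be nonzero on the line in between, with $k\notin\pi\Z$.

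Here is a concrete example. Let $\D_a,\D_b$ be dangling edges of lengths $\ell_a,\ell_b$. Let the word $bab$ sit at sites $n,n+1,n+2$; it occurs in the Fibonacci subshift, with $b$ the rarer letter. Let $k=\pi/(2\ell_b)$, and let $y$ denote distance from the base of a decoration edge. Define
\begin{itemize}
\item $f=\sin(k(x-n))$ on $[n,n+1]$ and $f=\sin(k(n+2-x))$ on $[n+1,n+2]$;
\item $f=-\sin(ky)$ on both $b$-edges;
\item $f=\sin k\,\cos(k(\ell_a-y))/\cos(k\ell_a)$ on the $a$-edge;
\item $f=0$ elsewhere.
\end{itemize}
All vertex conditions hold automatically except Kirchhoff at $n+1$, which reduces to $\tan k\,\tan(k\ell_a)=2$. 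This curve contains points with $\ell_a\neq\ell_b$, both irrational and avoiding any countable family of integer linear relations. At such points $m_a(n\pi)$ and $m_b(n\pi)$ are finite and nonzero, and no decoration has a zero-flux Dirichlet eigenfunction. Yet a compactly supported eigenfunction exists.

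The missing idea is to rule out such resonances on every finite window between two decorations. The paper does this in one step. By Lemma~\ref{lem:compact}, any compactly supported eigenfunction restricts to a Kirchhoff eigenfunction on $\left[\D_1,\D_2\right]$ that vanishes at two base vertices. The paper then intersects, over the countably many such subgraphs, the Berkolaiko--Liu generic sets on which no eigenfunction vanishes at a vertex. Your concern that the unit edges cannot be perturbed is legitimate, and the paper's appeal does not spell it out either. However, it must be resolved at all Dirichlet eigenvalues of the decorations, not only at $k\in\pi\Z$.

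Finally, your argument at $k=n\pi$ uses the wrong variables. There $\sin k=0$, so vertex values do not determine the edge functions; indeed $|f(j)|$ is constant along the line, and vanishing at two consecutive vertices forces nothing. The argument must be run on the amplitudes $B_j$ in $f=B_j\sin(k(x-j))$. It then needs only that $n^2\pi^2$ is not a Dirichlet eigenvalue of any decoration; the condition $m_a(n\pi)\neq0$ plays no role.
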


As we shall see in Subsection~\ref{subsec:Generic-Cantor-spectrum},
$\sigma\left(H_{\omega}\right)$ may not be a Cantor set if $H_{\omega}$
admits infinitely degenerate eigenvalues with compactly supported
eigenfunctions, a case that may happen for particular (non-generic)
choices of the edge lengths, or in the presence of loop edges.

We mention that the questions we consider here may also be considered
for more general Schrödinger operators than the Laplacian, namely
of the form $H_{\omega}=-\frac{d^{2}}{dx^{2}}+V_{\omega}\left(x\right)$
where $V_{\omega}\left(x\right)$ is sufficiently ``compatible''
with the tiling structure of $\Gamma_{\omega}$, similar to the continuum
Schrödinger operators studied in \cite{DamFilGor_ahp15}. We do not
pursue this extension here.

\subsection*{Acknowledgments}

We would like to thank Lior Alon, Siegfried Beckus, Gregory Berkolaiko,
David Damanik, Jake Fillman, Pavel Kurasov, and Selim Sukhtaiev for
extremely helpful discussions. We gratefully acknowledge the hospitality
of the Institute of Mathematics at the University of Potsdam, where
some of this research took place. The research for this paper was
partially conducted at the Israel Institute for Advanced Studies,
as part of the Research Group Analysis, Geometry, and Spectral Theory
of Graphs during 2025-2026. This research was supported by the Israel
Science Foundation (ISF Grant No. 2362/25), and by grant no. 2024154
from the United States - Israel Binational Science Foundation (BSF),
Jerusalem, Israel. GS was supported by the STSM travel grant as part
of the COST Action CA18232.

\section{The $m$-function and a Borg-Marchenko result\label{sec:BM theorem}}

In this section, we introduce a main tool for the proof of Theorem
\ref{thm:TMP}, which is the $m$-function associated with a quantum
graph (also known as the Weyl-Titchmarsh function, or Dirichlet-to-Neumann
map). We then prove that the $m$-function of a tiling graph $\Gamma_{\omega}$
can be used to recover the sequence of tiles $\omega\left(n\right)$.
Such an inverse spectral geometric result is commonly known as a Borg-Marchenko
type result, see \cite{AvdKur_ipi08,Kurasov_inverse_2009,Kurasov_Book}
for some relevant results in the context of quantum graphs, or \cite{marchenko1950certain}
for the original paper by Marchenko.

\subsection{The $m$-function and the Borg-Marchenko result\label{subsec:m-function}}

We now introduce the $m$-functions associated with $\Gamma_{\omega}$.
For this, we start by partitioning the infinite graph $\Gamma_{\omega}$
into two half-infinite graphs $\Gamma^{\pm}_{\omega}$, which are
obtained by cutting $\Gamma_{\omega}$ at the origin $o\left(\Gamma_{\omega}\right)$.

Let $z\in\C_{+}:=\left\{ z\in\C:\mathrm{Im}\left(z\right)>0\right\} $.
On each half-graph $\Gamma^{\pm}_{\omega}$, consider the differential
equation
\begin{equation}
-\frac{d^{2}}{dx^{2}}f_{\pm}=zf_{\pm},\label{eq:m-ODE}
\end{equation}
with the Kirchhoff condition imposed at all vertices except for $o\left(\Gamma^{\pm}_{\omega}\right)$,
where no boundary condition is imposed. Since $z\notin\R$, this equation
has a unique solution $f^{\pm}_{\omega}\left(\thinspace\cdot\thinspace;z\right)\in L^{2}\left(\Gamma^{\pm}_{\omega}\right)$
(the proof is similar to that of \cite[lem. 9.7, thm. 9.1]{Teschl2014}).
The $m^{\pm}_{\omega}$ functions are then defined as follows:
\begin{align}
 & m^{\pm}_{\omega}:\C_{+}\rightarrow\C,\label{eq:m2}\\
 & m^{\pm}_{\omega}\left(z\right)=\frac{\sum_{e\sim o}\left.\frac{df^{\pm}_{\omega}}{dx}\right|_{e}\left(o;z\right)}{f^{\pm}_{\omega}\left(o;z\right)},\label{eq:m3}
\end{align}
where we abbreviate notation by using $o$ instead of $o\left(\Gamma^{\pm}_{\omega}\right)$,
the edges $e\sim o$ are the ones incident to $o$, and the derivatives
are taken in the outward pointing direction from $o$ (the $\pm$
sign affects the chosen edges and the derivative directions). By standard
theory (see \cite[thm. 18.2]{Kurasov_Book}), $m^{\pm}_{\omega}\left(z\right)$
are Herglotz functions, i.e., they are analytic and their image is
contained in $\C_{+}$.

Our goal in this section is to prove the following Theorem:
\begin{thm}[Borg-Marchenko]
\label{thm:BM}Assume that the individual tiles $\left(\Ta\right)_{a\in\mathcal{A}}$
are given and satisfy the conditions of Theorem \ref{thm:TMP}. Then
for any $\omega\in\A^{\Z}$, the pair of corresponding $m$-functions
$m^{\pm}_{\omega}\left(z\right)$ uniquely determines $\omega$.
\end{thm}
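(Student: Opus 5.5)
We recover $\omega$ one letter at a time by peeling tiles off the half-graphs. We treat $m^{+}_{\omega}$, which should determine $\omega(0),\omega(1),\dots$; the argument for $m^{-}_{\omega}$ and the negative indices is symmetric, with $v^{a}_{2}$ playing the role of $v^{a}_{1}$.

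The main tool is a periodic-orbit (scattering) expansion of $m^{+}_{\omega}$ along the imaginary direction. Write $z=k^{2}$ with $k=\rmi\kappa$ and $\kappa\to+\infty$. On each edge the $L^{2}$ solution is a combination of $\rme^{\pm\rmi kx}$. The Kirchhoff vertex scattering matrices are $\sigma_{v}=\frac{2}{\deg v}J-I$, where $J$ is the all-ones matrix. Expressing $f^{+}_{\omega}$ through the bond scattering operator should give, for large $\kappa$,
\[
m^{+}_{\omega}(k^{2})=\rmi k\Big(d_{0}+\sum_{\gamma}A_{\gamma}\,\rme^{\rmi k\ell(\gamma)}\Big).
\]
Here $d_{0}=\deg_{\Gamma^{+}}(o)$, and the sum runs over closed walks $\gamma$ that start and end at $o$ and go into $\Gamma^{+}_{\omega}$. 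Each $A_{\gamma}$ is a product of vertex scattering coefficients. Because all edge lengths are at least $\ell_{0}$ and degrees are bounded, the series converges absolutely once $\kappa$ is large, since $|\rme^{\rmi k\ell}|=\rme^{-\kappa\ell}$. By analyticity we may therefore read $m^{+}_{\omega}$ as a generalized Dirichlet series in $\rme^{-\kappa}$. Its exponents $\ell(\gamma)$ and coefficients are then uniquely determined, by uniqueness of generalized Dirichlet expansions: peel off the smallest exponent, multiply by $\rme^{\kappa\ell}$, and let $\kappa\to\infty$.

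\textbf{Step 1: the first tile.} The smallest exponent is $2\ell^{\omega(0)}_{\min}$. It comes from the walk that goes back and forth along the shortest edge at $o=v^{\omega(0)}_{1}$, and its coefficient is nonzero because reflection at a vertex of degree $\ne2$ is nonzero. By Assumption~\ref{assu:no-deg-2}(1) the values $\ell^{a}_{\min}$ are distinct, so this exponent identifies $\omega(0)$. One subtlety: if $\deg(v^{\omega(0)}_{1})=1$, then $o$ carries only that edge. The same reasoning still reads off $\ell_{\min}$ and hence the letter, provided the coefficient does not cancel. Cancellation is impossible here because the minimal-length walk is unique.

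\textbf{Step 2: induction.} Knowing $T_{\omega(0)}$, we strip it off. Solving the Kirchhoff equations on the known compact tile $T_{\omega(0)}$ gives a linear-fractional relation
\[
m^{+}_{\omega}=\Phi_{\omega(0)}\big(z,\,m^{+}_{\sft\omega}\big),
\]
expressing $m^{+}_{\omega}$ through $m^{+}_{\sft\omega}$ evaluated at $v^{\omega(0)}_{2}=v^{\omega(1)}_{1}$. Away from the discrete set of Dirichlet eigenvalues of the tile, this relation can be inverted. The inverted map is determined by the known tile, so $m^{+}_{\sft\omega}$ is determined, and Step 1 applied to it gives $\omega(1)$. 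Iterating recovers every $\omega(n)$ with $n\ge0$.

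\textbf{Main obstacle.} The hard part is Step 1 when the shortest returning orbit is not the bounce on the shortest edge at $o$. This happens when $o$ has degree $1$, so the walk must first traverse the dangling edge $\ell^{a}_{\mathrm{left}}$. It also happens when one tile is an interval (or has a single connecting edge of length $\ell^{\overline a}$), because walks passing through chains of $\overline a$-tiles produce exponents $2(j\ell^{\overline a}+\ell^{a'}_{\mathrm{left}}+\dots)$. These competing exponents could coincide with $2\ell^{a}_{\min}$ or with each other. This is exactly what Assumption~\ref{assu:no-deg-2}(2) excludes, through its congruence condition modulo $\ell^{\overline a}$ and its inequality $\ell^{a'}_{\mathrm{left}}-\ell^{a''}_{\mathrm{left}}\ne\ell^{\overline a}$. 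The plan is therefore:

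1. Classify the shortest closed walks from $o$ in each case: $\deg o=1$ or not, and whether the first tiles are $\overline a$.
2. Show that the leading exponent either directly identifies $\omega(0)$, or identifies the number $j$ of initial $\overline a$-tiles together with the next letter.
3. Use the assumption to rule out coincidences of exponents that could cancel coefficients.

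Once a whole block $\overline a^{\,j}a'$ has been identified, Step 2 peels it off in one go.
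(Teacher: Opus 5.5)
\textbf{Step 2 is a valid, cleaner alternative.} Your induction is genuinely different from the paper's. The paper keeps working with the orbit expansion of $m^{+}_{\omega}$ itself. Knowing $\omega(0),\dots,\omega(n-1)$, it reads $\deg v^{\omega(n)}_{1}$ off the coefficient of a shortest round trip to $v^{\omega(n-1)}_{2}$. It then compares coefficients at lengths $\ell_{p}+2\ell^{a}_{\min}$ with the contributions of orbits inside the known tiles. Your linear-fractional relation is exactly (\ref{eq:mTw}), read in the direction you need. The matrix depends only on $T_{\omega(0)}$, lies in $SL(2,\mathbb{C})$ and exists off a discrete set, so $m^{+}_{\mathrm{S}\omega}$ is recovered on $\mathbb{C}_{+}$ by analytic continuation. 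This removes all of the paper's bookkeeping and turns every step into a base case.

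\textbf{The gap is the base case.} You defer it to a plan, and the plan would not work as stated. Step 1 claims that the leading exponent is $2\ell^{\omega(0)}_{\min}$ and that it identifies $\omega(0)$. This fails once $T_{\bar a}$ enters.
\begin{itemize}
\item Take $\omega=\bar a^{j}a'\cdots$ with $T_{\bar a}$ an interval. Every junction of degree $2$ is transparent (reflection $2/2-1=0$), so the leading exponent is $2(j\ell^{\bar a}+\ell^{a'}_{\mathrm{left}})$. By contrast, degree $1$ at $o$ with $\omega(0)\neq\bar a$ is harmless, as your own ``subtlety'' remark already shows.
\item Your item 3 cannot close this with exponents alone. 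Assumption~\ref{assu:no-deg-2}(2) only constrains the $\ell^{a}_{\mathrm{left}}$, and these vanish when $\deg v^{a}_{1}\ge2$.
\item Concretely, let $T_{\bar a}$ be an interval of length $1$. Let $T_{b}$ have two edges of lengths $2$ and $3$ from $v^{b}_{1}$ to a vertex $w$, plus one edge from $w$ to $v^{b}_{2}$. Assumption~\ref{assu:no-deg-2} holds, yet $\bar a\bar a b\cdots$ and $b\cdots$ both have leading exponent $4$ with nonzero coefficient.
\end{itemize}

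\textbf{The missing idea is the constant term.} Use $d_{0}=\deg_{\Gamma^{+}_{\omega}}(o)$, which you write down but never use. In the interval case:
\begin{itemize}
\item If $d_{0}\ge2$, then $\omega(0)\neq\bar a$, and $2\ell^{\omega(0)}_{\min}$ identifies the tile.
\item If $d_{0}=1$, the candidates are $\bar a$ and the tiles with $\deg v^{a}_{1}=1$. Their leading exponents are $2(j\ell^{\bar a}+\ell^{a'}_{\mathrm{left}})$ with $j\ge1$ (or no exponent at all, for $\bar a\bar a\cdots$) versus $2\ell^{a}_{\mathrm{left}}$. Assumption (2) separates exactly these.
\end{itemize}
Since you peel one tile at a time, you only need to decide whether $\omega(0)=\bar a$. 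You do not need ``$j$ together with the next letter''; one exponent cannot give you the letter anyway when $\ell^{a'}_{\mathrm{left}}=0$. The non-interval $\bar a$ still needs its own check. There, transparency at $v^{\bar a}_{2}$ occurs only if $\deg_{T_{\bar a}}v^{\bar a}_{2}=\deg v^{\omega(1)}_{1}=1$.

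\textbf{Relation to the paper.} The paper's base case scans $\ell^{a}_{\min}$, $a\neq\bar a$, in increasing order and defaults to $\bar a$. It has the same weak spot, and $d_{0}$ repairs it the same way. Its induction step does use the degree, through the restriction to $\mathcal{A}_{0}$.

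\textbf{The left half.} Treating $m^{-}_{\omega}$ ``with $v^{a}_{2}$ in place of $v^{a}_{1}$'' requires Assumption~\ref{assu:no-deg-2} at $v^{a}_{2}$, which is not among the hypotheses. The paper's ``in exactly the same manner'' glosses over this as well.
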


This result essentially states that one can reconstruct the graphs
$\Gamma^{\pm}_{\omega}$ from the functions $m^{\pm}_{\omega}\left(z\right)$,
assuming that the basic building blocks (tiles) are known. The proof
relies on a periodic orbit expansion for $m^{\pm}_{\omega}\left(z\right)$,
which is developed next. Starting from here and in the following subsections,
we conveniently focus on the half-graph $\Gamma^{+}_{\omega}$ (and
the associated $m$-function $m^{+}_{\omega}\left(z\right)$), noting
that $\Gamma^{-}_{\omega}$ and $m^{-}_{\omega}\left(z\right)$ may
be treated similarly. We also keep $\omega$ fixed, and thus from
now on and until the end of the section, we abbreviate and simply
write $o$ instead of $o\left(\Gamma^{+}_{\omega}\right)$.

In order to state the periodic orbit expansion for $m^{+}_{\omega}\left(z\right)$,
we first introduce a definition of a \textit{tailed} periodic orbit
(see also Figure~\ref{fig: periodic orbit}).
\begin{defn}
\label{def:periodic orbits} ~
\begin{enumerate}
\item Let $\Gamma$ be a metric graph. A \textit{tailed periodic orbit}
(TPO) $p$ around a vertex $v\in\Gamma$ is a finite sequence of successively
connected directed edges in $\Gamma$, $\left(v_{1},v_{2}\right)$,$\left(v_{2},v_{3}\right),...,\left(v_{n},v_{n+1}\right)$,
where $v_{1}=v_{n}=v$ (see Figure~\ref{fig: periodic orbit}). We
think of such TPOs as periodic orbits around $v$, along with a chosen
terminal direction $\left(v_{n},v_{n+1}\right)$ (thought of as the
``tail'' of the orbit). In this work, we mostly consider TPOs around
the origin of $\Gamma^{\pm}_{\omega}$ (so $v=o$).
\item For a TPO $p$, we denote its total length by $\ell_{p}:=\sum^{n-1}_{i=1}\left|\left(v_{i},v_{i+1}\right)\right|$,
where $\left|\cdot\right|$ denotes the edge length (Note that we
do not include the tail $\left(v_{n},v_{n+1}\right)$ in the sum).
We denote its topological length by $\left|p\right|:=n-1$, i.e.,
the number of directed edges in the sequence $p$, excluding the tail
$\left(v_{n},v_{n+1}\right)$.
\item Let $\Gamma^{+}_{\omega}$ be a half-infinite tiling graph, with origin
vertex $o$. The set of TPOs around the origin for $\Gamma^{+}_{\omega}$
is denoted by $\mathcal{P}^{+}_{\omega}$.
\end{enumerate}
\end{defn}

\begin{figure}
\includegraphics[scale=0.55]{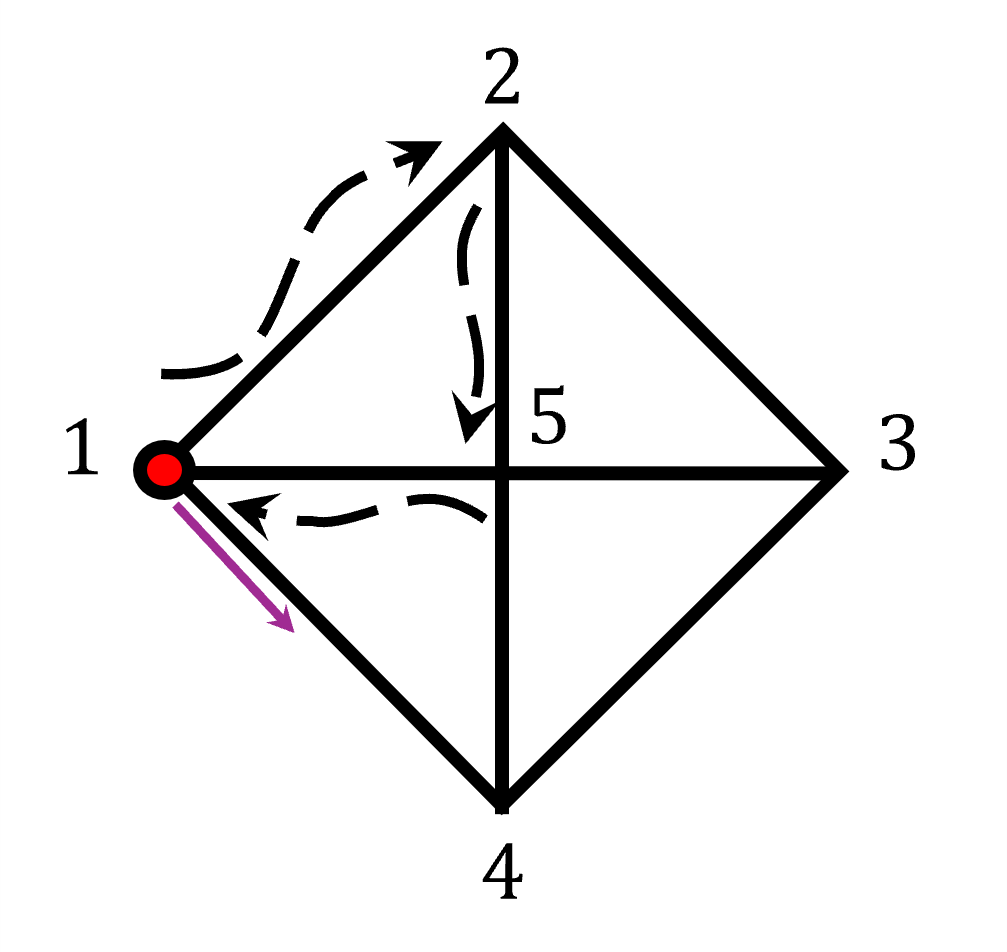}

\caption{A square kite graph with vertex set $\left\{ 1,2,3,4,5\right\} $,
together with a TPO $p=\left(\left(1,2\right),\left(2,5\right),\left(5,1\right),\left(1,4\right)\right)$
around the marked vertex $1$ (the terminal direction is marked in
purple). \label{fig: periodic orbit}}
\end{figure}

\begin{thm}
\label{thm:periodic-orbits} The $m$-function $m^{+}_{\omega}\left(z\right)$
has the following expansion in terms of the tailed periodic orbits
of $\Gamma^{+}_{\omega}$:
\begin{equation}
m^{+}_{\omega}\left(z\right)=-\sqrt{-z}\sum_{p\in\mathcal{P}^{+}_{\omega}}A_{p}e^{-\sqrt{-z}\ell_{p}}.\label{eq:periodic-m}
\end{equation}
\end{thm}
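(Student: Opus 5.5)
The strategy is the standard scattering (bond-scattering) approach from quantum chaos on graphs. Write $k:=\rmi\sqrt{-z}$ with the branch $\mathrm{Re}\sqrt{-z}>0$, so that $e^{\rmi k x}=e^{-\sqrt{-z}x}$ decays along each edge. On every edge $e=(u,w)$ of $\Gamma^{+}_{\omega}$, write the $L^{2}$ solution $f^{+}_{\omega}$ as a superposition of two plane waves, one travelling from $u$ to $w$ and one from $w$ to $u$. Denote by $a_{(u,w)}$ the amplitude of the outgoing wave at $u$ along the directed edge $(u,w)$. Kirchhoff conditions at a vertex $v$ of degree $d$ then say that the outgoing amplitudes equal $\sigma^{(v)}$ applied to the incoming amplitudes, where
\[
\sigma^{(v)}_{e',e}=\frac{2}{d}-\delta_{e',e}.
\]
Incoming amplitudes arrive multiplied by $e^{\rmi k\ell_e}$ after traversing $e$.

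At the origin $o$ no condition is imposed. Normalize by prescribing a unit-strength ``source'': the outgoing amplitudes from $o$ consist of an injected vector $b$ plus the reflected part. The natural choice is to impose the Kirchhoff continuity condition at $o$ with a free derivative sum. This yields the scattering matrix of $o$ in $\Gamma^{+}_{\omega}$ with an extra lead, whose entries are again $2/(d+1)$-type coefficients.

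The amplitude vector then satisfies the linear system
\[
a=(I-U(k))^{-1}\,b_{\mathrm{in}},
\qquad U(k)=S\,e^{\rmi kL},
\]
where $U$ is the bond scattering operator on directed edges. The quantities $f^{+}_{\omega}(o)$ and $\sum_{e\sim o}\partial_{e}f^{+}_{\omega}(o)$ are explicit linear functionals of $a$. The derivative functional carries the prefactor $\rmi k=-\sqrt{-z}$, coming from $\frac{d}{dx}e^{\rmi kx}$. The function value is normalized to a constant, using continuity at $o$ and the structure of the vertex scattering at $o$.

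Next expand $(I-U)^{-1}=\sum_{n\ge0}U^{n}$ as a Neumann series. The $(e',e)$ entry of $U^{n}$ is a sum over directed paths of $n$ steps. Each path carries the product of vertex scattering coefficients along it and the phase $e^{\rmi k\,(\text{length})}=e^{-\sqrt{-z}\ell}$. Paths that start and return at $o$, together with the last directed edge through which they are read off, are exactly the TPOs of Definition~\ref{def:periodic orbits}. This gives \eqref{eq:periodic-m}, with $A_p$ equal to the product of the scattering coefficients along $p$ (times the $o$-normalization constant). The tail edge is not counted in $\ell_p$ because the derivative functional is evaluated at $o$ before traversing it.

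The main obstacle is convergence: showing that $\|U(k)\|<1$ on $\C_+$ in a suitable $\ell^{2}$ operator norm on directed edges of the infinite graph, and that the resulting series actually represents the unique $L^{2}$ solution. Each $\sigma^{(v)}$ is unitary, and $e^{\rmi kL}$ is diagonal with entries of modulus $e^{-\mathrm{Im}(k)\ell_e}\le e^{-\mathrm{Im}(k)\ell_0}<1$, since $\mathrm{Im}\,k=\mathrm{Re}\sqrt{-z}>0$. Hence $\|U\|\le e^{-\ell_0\mathrm{Re}\sqrt{-z}}<1$, using the uniform lower bound $\ell_0$ on edge lengths. So the Neumann series converges absolutely in $\ell^{2}$, and the series over TPOs converges absolutely.

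The amplitudes $a\in\ell^{2}$ then define a function which is in $L^{2}$ on each edge with norms summable, by exponential decay along edges and the bounded edge lengths. This function satisfies the Kirchhoff conditions away from $o$, so by uniqueness it equals $f^{+}_{\omega}$ up to scaling. Finally, the ratio defining $m^{+}_{\omega}$ is scale-invariant, which fixes the formula. Some care is needed at $o$ to verify that the chosen source normalization makes $f(o)$ an absolute constant, so that the expansion takes the stated form rather than a ratio of two series.
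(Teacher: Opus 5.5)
There is a genuine gap, and it sits at the origin $o$, which is exactly the point that produces the stated form. Your stationary bond-scattering route is the Laplace dual of the paper's time-domain argument (wave equation, DDTN kernel, Laplace transform). Your bound $\|U\|\le e^{-\ell_0\mathrm{Re}\sqrt{-z}}$ also controls convergence more cleanly than the paper's distributional bookkeeping. The problem is the normalization you call natural: an extra lead at $o$, with Kirchhoff scattering of degree $\deg(o)+1$. This does not make $f(o)$ constant. If $r$ is the amplitude reflected back into the lead, then $f(o)=1+r$, and Kirchhoff at $o$ gives $\sum_{e\sim o}\partial_e f(o)=\rmi k(1-r)$. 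Hence $m^{+}_{\omega}(z)=-\sqrt{-z}\,(1-r)/(1+r)$, which is precisely the ratio of two series you want to avoid. Your last sentence defers the one step that actually needs an argument.

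The paper fixes the \emph{value} at $o$ instead, through the Dirichlet control $u(o,t)=b(t)$, so that $m^{+}_{\omega}(-s^{2})=\widehat{\Lambda^{+}_{\omega}b}/\hat{b}$ is linear in the response. The stationary counterpart is a Dirichlet source. For every edge $e=\{o,w\}$, set $a_{(o,w)}=1-\alpha_e$, where $\alpha_e:=e^{\rmi k\ell_e}a_{(w,o)}$ is the amplitude arriving at $o$ along $e$. Then $f(o)=1$ on every incident edge, and
$m^{+}_{\omega}(z)=\sum_{e\sim o}\partial_e f(o)=-\sqrt{-z}\,\bigl(\deg(o)-2\sum_{e\sim o}\alpha_e\bigr)$.
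In $U$, the block at $o$ becomes reflection with amplitude $-1$ back into the arrival edge. This block is still unitary, so your norm bound, the Neumann series and the identification with $f^{+}_{\omega}$ all go through unchanged.

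This also changes your description of $A_p$: the coefficients are not products of Kirchhoff coefficients along $p$, whether you use $\deg(o)$ or $\deg(o)+1$ at $o$. Each trivial TPO contributes $1$. A closed walk from $o$ contributes $-2$ times the product of the Kirchhoff coefficients at its visits to vertices other than $o$, times $-1$ for each intermediate return to $o$, where it must leave along the edge it arrived on. The tail is only bookkeeping. A check: for $\Gamma^{+}_{\omega}=[0,L]$ with a Neumann end, $m^{+}(z)=-\sqrt{-z}\tanh(\sqrt{-z}L)=-\sqrt{-z}\,\bigl(1-2e^{-2\sqrt{-z}L}+2e^{-4\sqrt{-z}L}-\cdots\bigr)$, whereas Kirchhoff products including $o$ would give every coefficient $+1$.

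The same check shows the explicit coefficients in the paper's proof need this correction too. Under $u(o,t)=b(t)$, the returning wave is reflected at $o$ with amplitude $-1$ and itself contributes to $\partial_n u$. So the second term of \eqref{eq:6wave} should read $+2[S_v]_{\hat e,e}\,\delta'(t-2L)$. The later use only needs $2/\deg(v_{\min})-1\neq0$, so the base case of the Borg--Marchenko argument is unaffected.

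Finally, $\|U\|<1$ gives convergence of the TPO series grouped by the number of scattering events (each group is finite). It does not give absolute convergence for all $z\in\C_+$. At a vertex of degree $d\ge3$, the Kirchhoff coefficients out of a given incoming direction have total modulus $3-4/d>1$. So $\sum_p|A_p|e^{-\mathrm{Re}\sqrt{-z}\,\ell_p}$ is finite only for $\mathrm{Re}\sqrt{-z}$ large, which is enough for recovering the tiles.
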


The coefficients $A_{p}$ appearing in the theorem are explicitly
given in the proof. Theorem \ref{thm:periodic-orbits} is fully proven
in the following subsection. A reader who is less interested in the
proof details may proceed directly to Subsection~\ref{subsec:bm-proof},
where the periodic orbit expansion (\ref{eq:periodic-m}) is applied
in the proof of the Borg-Marchenko Theorem \ref{thm:BM} to recover
the sequence $\omega$.

We remark that in many cases (see e.g. \cite[thm 9.22]{Teschl2014}
and \cite[thm 3.5]{DamFanSuk_ahp20}), Borg-Marchenko results on the
line rely on first proving asymptotic estimates for $m\left(z\right)$
as the spectral parameter $z$ tends to infinity. Theorem \ref{thm:periodic-orbits}
not only proves the Borg-Marchenko Theorem \ref{thm:BM} directly
(with no need for such asymptotics), but can also be used to provide
such asymptotics for $m^{\pm}\left(z\right)$ as $z\rightarrow\infty$,
as in the alternative proofs.

\subsection{The dynamical Dirichlet-to-Neumann map and the proof of the TPO expansion}

We start by introducing the relevant definitions related to the dynamical
Dirichlet-to-Neumann map (also known as the dynamical response operator),
and refer to \cite[sec. 19]{Kurasov_Book} for a more thorough background.
We then use it to prove the TPO expansion for the $m$-function stated
in Theorem \ref{thm:periodic-orbits}.

Consider the dynamical wave equation on $\Gamma^{+}_{\omega}$:
\begin{align}
 & \frac{\partial^{2}u}{\partial x^{2}}=\frac{\partial^{2}u}{\partial t^{2}},\label{eq:wave equation}\\
 & u\left(x,0\right)=0,\label{eq:wave2}\\
 & u_{t}\left(x,0\right)=0,\label{eq:wave3}
\end{align}
with the Kirchhoff condition imposed at all vertices except for $o$,
which is subject to the boundary control:
\begin{equation}
u\left(o,t\right)=b\left(t\right),\label{eq:wave4}
\end{equation}
with $b\in L^{2}_{\text{loc}}\left(0,\infty\right)$. The \textit{dynamical
Dirichlet-to-Neumann (DDTN) map} $\Lambda^{+}_{\omega}$ is the mapping:
\begin{equation}
b\left(\cdot\right)\xmapsto{\Lambda^{+}_{\omega}}\partial_{n}u\left(o,\cdot\right):=\sum_{e\sim o}\frac{\left.\partial u\right|_{e}\left(o,\cdot\right)}{\partial x},\label{eq:ddtn}
\end{equation}
where the edges $e\sim o$ are the ones incident to $o$ in $\Gamma^{+}_{\omega}$,
and the derivatives are taken in the outward pointing direction from
$o$. By \cite[eq. (19.12)]{Kurasov_Book}, the DDTN map and the $m$-function
are in one-to-one correspondence, given by:
\begin{equation}
m^{+}_{\omega}\left(-s^{2}\right)=\frac{\widehat{\left(\Lambda^{+}_{\omega}b\right)}\left(s\right)}{\widehat{b}\left(s\right)},\label{eq:DDTN-M-correspondence}
\end{equation}
where $\widehat{\left(\cdot\right)}$ is the Laplace transform:
\begin{equation}
\hat{f}\left(s\right)=\int^{\infty}_{0}f\left(t\right)e^{-st}\rmd t.\label{eq:Laplace}
\end{equation}

Our next goal is to obtain an expansion for $\Lambda^{+}_{\omega}$
(and hence for $m^{+}_{\omega}$) in terms of tailed periodic orbits
(TPOs)\@. Before doing so, we present the scattering matrix of a
quantum graph (which was first introduced in \cite{KotSmi_ap99,KotSmi_prl97}).
Let $\Gamma$ be a quantum graph and let $v\in\V\left(\Gamma\right)$.
On each edge $e$ incident to $v$, one can write the solutions to
the eigenvalue equation $-\frac{d^{2}}{dx^{2}}g=k^{2}g$ in the form
\begin{equation}
g|_{e}\left(x\right)=a_{e}\rme^{\rmi kx}+a_{\hat{e}}\rme^{\rmi k\left(\ell_{e}-x\right)},\label{eq:scatterwave}
\end{equation}
with $a_{e},a_{\hat{e}}\in\C$. Here, we think of $e$ as a directed
edge pointing towards a vertex $v$, and denote its reversal by $\hat{e}$.
Note that for $g$ to also satisfy the Kirchhoff vertex condition
at $v$, the vectors $\vec{a}_{\text{in}}:=\left(a_{e_{1}},...,a_{e_{\text{deg\ensuremath{\left(v\right)}}}}\right)$,
$\vec{a}_{\text{out}}:=\left(a_{\hat{e}_{1}},...,a_{\hat{e}_{\text{deg\ensuremath{\left(v\right)}}}}\right)$
must satisfy a particular linear relation. The \emph{scattering matrix}
associated with $v$ is the unique unitary matrix $S_{v}\left(k\right)\in U\left(\deg\left(v\right)\right)$
that expresses this linear relation:
\begin{equation}
\vec{a}_{\text{out}}=S_{v}\left(k\right)\rme^{\rmi k\vec{L}}\vec{a}_{\text{in}},\label{eq:a=00003DUa}
\end{equation}
where $\vec{L}=diag\left(\ell_{e_{1}},...,\ell_{e_{\deg\left(v\right)}}\right)$
is the diagonal matrix of edge lengths.

The scattering matrix generally depends on the imposed vertex conditions.
Given two directed edges $e,e'$, we write that $e\rightarrow e'$
at $v$ if the vertex $v$ is both the end vertex of $e$ and the
starting vertex of $e'$. One can show that for the Kirchhoff condition,
$S_{v}\left(k\right)=:S_{v}$ is independent of $k$ and is given
by (cf. \cite[eq. (3.6)]{GnuSmi_ap06}),
\begin{equation}
\left[S_{v}\right]_{e',e}=\begin{cases}
\frac{2}{\deg\left(v\right)}-1, & e'=\hat{e},\\
\frac{2}{\deg\left(v\right)}, & e\rightarrow e'\text{ at \ensuremath{v} and }e'\neq\hat{e}\\
0, & \text{Otherwise.}
\end{cases},\label{eq:kirch-scar}
\end{equation}

The scattering matrix is a common tool in the theory of quantum graphs,
not only for spectral analysis, but also for describing wave propagation,
which we indeed employ in the following proof.
\begin{proof}[Proof of Theorem \ref{thm:periodic-orbits}]
 We show that the DDTN map can be represented as a convolution with
a generalized kernel:
\begin{equation}
\Lambda^{+}_{\omega}b\left(t\right)=\left(k^{+}_{\omega}*b\right)\left(t\right):=\int^{\infty}_{0}k^{+}_{\omega}\left(t-s\right)b\left(s\right)\rmd s.\label{eq:ddtnker}
\end{equation}
Combining this with (\ref{eq:DDTN-M-correspondence}) and using that
the Laplace transform is multiplicative with respect to convolution
gives the simple relation between the kernel $k^{+}_{\omega}\left(t\right)$
and the $m$-function:
\begin{equation}
m^{+}_{\omega}\left(-s^{2}\right)=\frac{\widehat{\left(k^{+}_{\omega}*b\right)}\left(s\right)}{\widehat{b}\left(s\right)}=\widehat{k^{+}_{\omega}}\left(s\right).\label{eq:kernel-M-correspondence}
\end{equation}
Hence, to prove the theorem we show (\ref{eq:ddtnker}) and develop
a TPO expansion for the generalized kernel, $k^{+}_{\omega}\left(t\right)$.
We do both simultaneously.

The key idea is that the solution $u\left(x,t\right)$ to the dynamical
wave equation (\ref{eq:wave equation}) consists of waves propagating
along the edges of the graph, with different amplitudes corresponding
to different TPOs. We systematically construct the expansion of $k^{+}_{\omega}\left(s\right)$
by analyzing the solution to the dynamical wave equation on $\Gamma^{+}_{\omega}$,
keeping track of how it scatters at each vertex. Due to the finite
propagation speed of the wave equation (which equals to $1$ by (\ref{eq:wave equation})),
the solution at time $t$ is always supported on the geodesic ball
of radius $t$ around the origin, which allows us to focus on a bounded
neighborhood of the origin.

~

\textbf{\textit{Step 1: Short-time behavior of the wave equation.}}

Recall that the d'Alembert solution to the dynamical wave equation
is given as a superposition of an ingoing and outgoing wave on each
edge:
\begin{equation}
u|_{e}\left(x,t\right)=F_{e}\left(t-x\right)+G_{e}\left(t+x\right).\label{eq:Dalambert}
\end{equation}
 Through direct substitution into (\ref{eq:wave equation}),(\ref{eq:wave2}),(\ref{eq:wave3}),(\ref{eq:wave4}),
one can verify that for $0<t\leq\min_{e\sim o}\left\{ \left|e\right|\right\} $,
the solution to the wave equation is given by
\begin{equation}
u|_{e}\left(x,t\right)=\begin{cases}
b\left(t-x\right), & e\sim o,\\
0, & \text{otherwise}.
\end{cases}\label{eq:1wave}
\end{equation}
In other words, at short times, the wave only propagates along edges
that are incident to the origin $o$, and the associated d'Alembert
solution is then given by an outgoing wave with profile equal to that
of the boundary control $b\left(t\right)$. This means that at short
times, the DDTN satisfies: 
\begin{align}
 & \Lambda^{+}_{\omega}b\left(t\right)=\partial_{n}u\left(o,t\right)=-\sum_{e\sim o}b'\left(t\right),\label{eq:2wave}
\end{align}
and so we may choose the representing kernel to be
\begin{equation}
k^{+}_{\omega}\left(t\right)=-\deg\left(o\right)\cdot\delta'\left(t\right)+...,\label{eq:3wave}
\end{equation}
where the additional terms vanish for $t<\min_{e\sim o}\left\{ \left|e\right|\right\} $,
and the equality above is in the sense of distributions. Here, $\delta'$
is a delta prime distribution, defined through
\begin{equation}
\left(\delta'*f\right):=f'\left(0\right).\label{eq:delta-prime}
\end{equation}

~

\textbf{\textit{Step 2: First scattering event.}}

Let $v$ be the vertex which is the nearest to the origin and denote
$L:=\left|(o,v)\right|$. Then, once the wave $u|_{e}\left(x,t\right)$
reaches $v$ at time $t=L$, it scatters to all neighboring edges
$e'\sim v$ (see Figure \ref{fig: scattering}), where the associated
wave amplitudes at each edge are determined by the Kirchhoff condition
at $v$. It is simple to verify using Equations (\ref{eq:scatterwave}),(\ref{eq:a=00003DUa})
that for a plane wave solution to the wave equation of the form
\begin{equation}
w|_{e}\left(x,t\right)=\rme^{\rmi\left(t-kx\right)},\label{eq:plane-wave}
\end{equation}
the wave amplitude at $e$ following this event is determined via
the scattering matrix element $\left[S_{v}\right]_{\hat{e},e}$, i.e.,
\begin{equation}
w_{e}\left(x,t\right)=\rme^{{\rm \rmi}\left(t-kx\right)}+\left[S_{v}\right]_{\hat{e},e}\rme^{\rmi\left(\left(t-L\right)-\left(L-kx\right)\right)}.\label{eq:plane-scatter}
\end{equation}
Then, straightforward application of Fourier theory (writing $u|_{e}\left(x,t\right)$
as an infinite superposition of plane waves) shows that similarly,
our d'Alembert solution at the edge $e$ after the first scattering
event is given by
\begin{equation}
u|_{e}\left(x,t\right)=b\left(t-x\right)+\left[S_{v}\right]_{\hat{e},e}b\left(\left(t-L\right)-\left(L-x\right)\right),\label{eq:4wave}
\end{equation}
see also \cite[sec. 19.5]{Kurasov_Book} for additional details. Mainly,
the scattering process at this vertex introduces a secondary wave
that backscatters into its initial edge $e$ with amplitude $\left[S_{v}\right]_{\hat{e},e}$,
but with a reversed direction (see Figure \ref{fig: scattering}).

\begin{figure}
\includegraphics[scale=0.55]{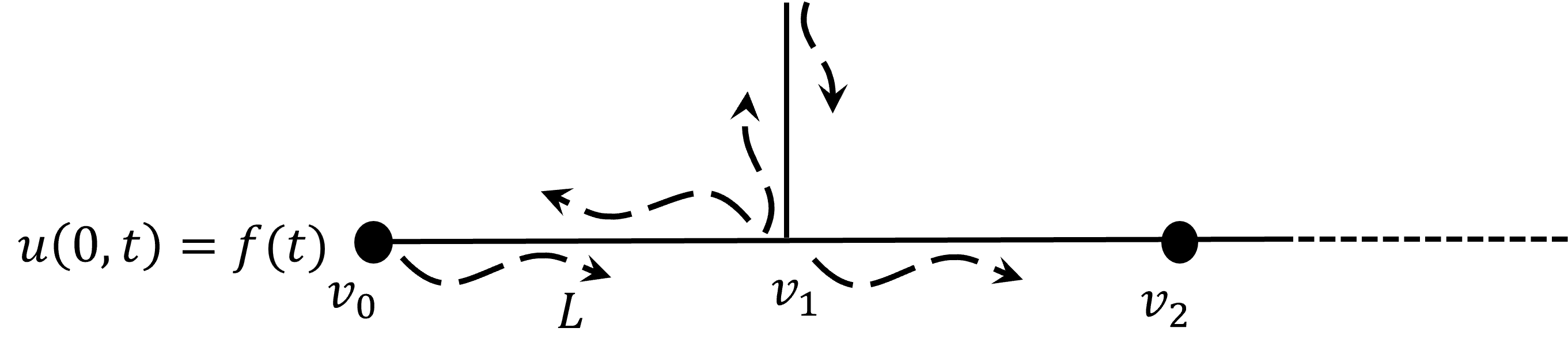}

\caption{Wave scattering on a comb graph at short times. Initially, the solution
propagates along the directed edge $e=\left(v_{0},v_{1}\right)$,
until it reaches the vertex $v_{1}$ at time $t=L$. The solution
then scatters in three directions, and in particular backscatters
into the directed edge $\left(v_{1},v_{0}\right)$ with wave amplitude
$\left[S_{v_{1}}\right]_{\hat{e},e}=\frac{2}{\deg\left(v_{1}\right)}-1=-\frac{1}{3}$.
\label{fig: scattering}}
\end{figure}
At time $t=2L$, the secondary wave reaches the origin $o$, and similarly
to before, scatters in all directions $e'\sim o$ with amplitude $\left[S_{o}\right]_{e',\hat{e}}$
due to the Kirchhoff condition at the origin. At short times after
this event\footnote{i.e. times with respect to which there is no additional scattering
event which makes another wave reach the origin.}, the normal derivative of $u\left(x,t\right)$ at the origin is given
by:
\begin{align}
 & \partial_{n}u\left(o,t\right)=-\sum_{e'\sim o}\left(b'\left(t\right)+\left[S_{o}\right]_{e',\hat{e}}\left[S_{v}\right]_{\hat{e},e}b'\left(t-2L\right)\right).\label{eq:5wave}
\end{align}
Thus, for $t\in\left[0,2L\right]$, we may choose the kernel to be
\begin{equation}
k^{+}_{\omega}\left(t\right)=-\deg\left(o\right)\delta'\left(t\right)-\left(\sum_{e'\sim o}\left[S_{o}\right]_{e',\hat{e}}\left[S_{v}\right]_{\hat{e},e}\right)\delta'\left(t-2L\right).\label{eq:6wave}
\end{equation}
Note that the terms $\delta'\left(t\right)$ and $\delta'\left(t-2L\right)$
correspond exactly to the shortest TPOs around the origin: $\delta'\left(t\right)$
corresponds to the trivial TPO of length $0$ and $\delta'\left(t-2L\right)$
correspond to all TPOs consisting of going back and forth along $e$
and an arbitrary tail, $e'\sim o$.\\
~

\textbf{\textit{Step 3: Scattering in longer times.}}

One can continue expanding the solution $u\left(x,t\right)$ in the
same fashion. As time progresses beyond $t=2L$, the d'Alembert solution
$u\left(x,t\right)$ continues to propagate and undergoes additional
scattering events at vertices further away from the origin. On edges
adjacent to $o$, $u\left(x,t\right)$ will always consist of a superposition
of delayed waves that scattered across some TPO around the origin.
The term corresponding to a TPO $p$ will be of the form $A_{p}b\left(t-\ell_{p}\right)$,
where the delay time $\ell_{p}$ is equal to the metric length of
the TPO $p$, and the amplitude $A_{p}$ is the product of scattering
coefficients along the TPO:
\begin{equation}
A_{p}:=\prod^{\left|p\right|}_{j=1}\left[S_{v_{j+1}}\right]_{\left(v_{j+1},v_{j+2}\right),\left(v_{j},v_{j+1}\right)}.\label{eq:scat-amp-2}
\end{equation}
On the level of the DDTN kernel $k^{+}_{\omega}\left(s\right)$, this
gives an infinite TPO expansion:
\begin{align}
 & \partial_{n}u\left(o,t\right)=-\sum_{p\in\mathcal{P}^{+}_{\omega}}A_{p}b'\left(t-\ell_{p}\right),\label{eq:2ddtn}\\
\Rightarrow\quad & k^{+}_{\omega}\left(t\right)=-\sum_{p\in\mathcal{P}^{+}_{\omega}}A_{p}\delta'\left(t-\ell_{p}\right).\label{eq:ddtn-ker-2}
\end{align}

Equation (\ref{eq:periodic-m}) can now be obtained by substituting
(\ref{eq:ddtn-ker-2}) into (\ref{eq:kernel-M-correspondence}) and
using the identity $\widehat{\delta'\left(t-\ell\right)}\left(s\right)=se^{-\ell s}$,
which finally gives:
\begin{equation}
m^{+}_{\omega}\left(z\right)=-\sqrt{-z}\sum_{p\in\mathcal{P}^{+}_{\omega}}A_{p}e^{-\sqrt{-z}\ell_{p}},\label{eq:m-final}
\end{equation}
finishing the proof.
\end{proof}

\subsection{Proving the Borg-Marchenko result\label{subsec:bm-proof}}
\begin{proof}[Proof of the Theorem \ref{thm:BM} (Borg-Marchenko)]
 Let $\left(\omega\left(n\right)\right)^{\infty}_{n=0}\in\A^{\N}$.
We use the TPO expansion (\ref{eq:periodic-m}) of $m^{+}_{\omega}\left(z\right)$
in order to inductively recover each element of $\left(\omega\left(n\right)\right)^{\infty}_{n=0}$.
Hence, the graph which is considered below is $\Gamma^{+}_{\omega}$.
In exactly the same manner $m^{-}_{\omega}\left(z\right)$ may be
used to recover $\left(\omega\left(n\right)\right)^{-\infty}_{n=-1}$.

\uline{Base case, \mbox{$n=0$}:}

By Assumption \ref{assu:no-deg-2}, there is at most one $\bar{a}\in\A$
in which $v^{1}_{\bar{a}}$ and $v^{2}_{\bar{a}}$ are connected by
a single edge (such a tile might be an interval). Consider the TPO
expansion of $m^{+}_{\omega}\left(z\right)$. First, note that if
$\omega\left(0\right)=a\neq\a$, then setting
\begin{equation}
\ell^{a}_{\min}:=\min\left\{ \ell_{e}:e\sim v^{1}_{a}\right\} ,\label{eq:lmin}
\end{equation}
the term $-A_{a}\sqrt{-z}e^{-\sqrt{-z}2\ell^{a}_{\min}}$ appears
in this TPO expansion with $A_{a}\neq0$. Indeed, in this case, there
are exactly $\deg\left(o\right)$ TPOs of length $2\ell^{a}_{\min}$.
These are the TPOs that traverse the (unique) edge $e_{\min}\sim o$
of length $\ell^{a}_{\min}$, backscatter and return to the origin,
and then scatter to some other edge $e'\sim o$. Since we assume that
$a\neq\a$, we get that there exists a vertex $v_{\min}$ in $\Ta$
such that $e_{\min}=\left\{ o,v_{\min}\right\} $, then direct computation
using (\ref{eq:kirch-scar}) and the second term in (\ref{eq:6wave})
shows that the total scattering amplitude contribution of these shortest
TPOs is given by 
\begin{align}
 & C_{a}=\sum_{e'\sim o}\left[S_{o}\right]_{e',\hat{e}}\left[S_{v_{\min}}\right]_{\hat{e},e}\label{eq:scattering-prefactor}\\
 & =\left(\frac{2}{\deg\left(v_{\min}\right)}-1\right)\left(\frac{2}{\deg\left(o\right)}-1\right)+\sum_{e_{\min}\neq e'\sim o}\left(\frac{2}{\deg\left(v_{\min}\right)}-1\right)\cdot\frac{2}{\deg\left(o\right)}\nonumber \\
 & =\left(\frac{2}{\deg\left(v_{\min}\right)}-1\right)\left(\frac{2}{\deg\left(o\right)}-1+\frac{2(\deg\left(o\right)-1)}{\deg\left(o\right)}\right)\nonumber \\
 & =\frac{2}{\deg\left(v_{\min}\right)}-1\neq0,\nonumber 
\end{align}
where the last inequality follows since $v_{\min}$ is included in
$\Ta$ (by $a\neq\a$) and Assumption~\ref{assu:no-deg-2} asserts
that $\deg\left(v_{\min}\right)\geq3$.

Now, iterate over the list $\left(\ell^{a}_{\min}\right)_{a\in\A\backslash\{\bar{a}\}}$
in increasing order, keeping in mind that no two elements in this
list are equal, by Assumption \ref{assu:no-deg-2}. For each $\ell^{a}_{\min}$,
examine the corresponding term $-A_{a}\sqrt{-z}e^{-\sqrt{-z}2\ell^{a}_{\min}}$
in the TPO expansion (\ref{eq:periodic-m}) of $m^{+}_{\omega}\left(z\right)$
(with the prefactor $A_{a}$ being possibly zero). By the discussion
above, for each $a\in\A\backslash\{\bar{a}\}$ such that $A_{a}=0$,
we can immediately deduce that $\omega\left(0\right)\neq a$. Consider
the minimal $\ell^{a}_{\min}$ with $a\in\A\backslash\{\bar{a}\}$
such that $A_{a}\neq0$. Then there exists a TPO of length $2\ell^{a}_{\min}$
around $o$, and hence the unique edge of length $\ell^{a}_{\min}$
must lie in the first tile, implying that $\omega\left(0\right)=a$.
If, on the other hand, $A_{a}=0$ for all $a\in\A\backslash\{\a\}$,
then the only remaining possibility is $\omega\left(0\right)=\a$.
We note that by Assumption~\ref{assu:no-deg-2} it is not possible
that there is an arrangement of a few tiles (more than one) which
contains a TPO of just two scattering events, such that this TPO's
length equals $2\ell^{a}_{\min}$ for some $a\in\A$. The impossibility
of such arrangements validates that the process described above indeed
yields the correct $\omega(0)$.\uline{}\\
\uline{Induction step:}

Suppose that $\omega\left(0\right),...,\omega\left(n-1\right)$ are
known, and we wish to recover $\omega\left(n\right)$. Let $p$ be
a TPO of minimal length that travels from the origin (which is also
$v^{1}_{\omega\left(0\right)}$) to the right boundary vertex $v^{2}_{\omega\left(n-1\right)}$,
and then returns to the origin along the reversal of the same path
(there might be several such TPOs, either differing only by their
tail, or by other edges). We now write an expansion of all TPOs of
length $\ell_{p}$ which are contained in the first known $n$ tiles.
In this expansion there is an unknown parameter which is the degree
of the right boundary vertex of the TPO. The degree of that vertex
within $\Gamma^{+}_{\omega}$ equals $\deg(v^{2}_{\omega\left(n-1\right)})+\deg(v^{1}_{\omega\left(n\right)})$,
but we do not know it until we recover what is $\omega(n)$. Comparing
this TPO expansion to the corresponding term $-A_{p}\sqrt{-z}e^{-\sqrt{-z}\ell_{p}}$
allows to recover the value of $\deg(v^{1}_{\omega\left(n\right)})$.
Doing so, we may already restrict the possible choices for $\omega(n)$
to be in a subset $\A_{0}\subset\A$.

Recall that we take $\bar{a}$ to be such that the corresponding tile
$T_{\a}$ has $v^{1}_{\bar{a}}$ and $v^{2}_{\bar{a}}$ connected
by a single edge; there is at most one such tile (by Assumption~\ref{assu:no-deg-2})
and there might not be such at all. As in the base case, to recover
$\omega\left(n\right)$ we iterate over the list $\left(\ell^{a}_{\min}\right)_{a\in\A_{0}\backslash\{\bar{a}\}}$
in increasing order. For each $a\in\A_{0}\backslash\{\bar{a}\}$,
focus on the term $-A_{a}\sqrt{-z}e^{-\sqrt{-z}\left(\ell_{p}+2\ell^{a}_{\min}\right)}$
in the TPO expansion (\ref{eq:periodic-m}) of $m^{+}_{\omega}\left(z\right)$.
Since we already know the first $n$ tiles, we write an expansion
of all TPOs of length $\ell_{p}+2\ell^{a}_{\min}$ which are contained
in the first known $n$ tiles (if such exist). If these TPOs pass
via $v^{2}_{\omega\left(n-1\right)}$ (and go back into $T_{\omega(n-1)}$
rather than entering $T_{\omega(n)}$) we know the corresponding scattering
amplitude since the value $\deg(v^{2}_{\omega\left(n-1\right)})+\deg(v^{1}_{\omega\left(n\right)})$
is already known, as explained above. Hence, we may explicitly compute
the overall contribution of all TPOs of length $\ell_{p}+2\ell^{a}_{\min}$
contained in the first known $n$ tiles to the TPO expansion of $m^{+}_{\omega}\left(z\right)$.
This yields a term of the form $-\tilde{A_{a}}\sqrt{-z}e^{-\sqrt{-z}\left(\ell_{p}+2\ell^{a}_{\min}\right)}$
(and again, $\tilde{A_{a}}$ may be $0$).

Similar to before, the first $a\in\A_{0}\backslash\{\a\}$ for which
$A_{a}\neq\tilde{A_{a}}$, implies that there are additional TPOs
of length $\ell_{p}+2\ell^{a}_{\min}$ which pass through the $n+1$
tile, $T_{\omega(n)}$. By the choice of $\ell_{p}$ and $\ell^{a}_{\min}$,
such TPOs must consist of the mentioned shortest path from the origin
to $v^{2}_{\omega\left(n-1\right)}$, which go forth and back the
unique edge $e_{\min}$ of length $\ell^{a}_{\min}$ in $T_{\omega(n)}$
adjacent to $v^{1}_{\omega\left(n\right)}$, and then return to the
origin through the initial path (reversed). Since no two elements
in the list $\left(\ell^{a}_{\min}\right)_{a\in\A_{0}\backslash\{\bar{a}\}}$
are equal, we deduce that $\omega\left(n\right)=a$. If instead $A_{a}=\tilde{A_{a}}$
for all $a\in\A_{0}\backslash\{\bar{a}\}$, then $\omega\left(n\right)=\a$.
This completes the reconstruction of the sequence $\omega$.
\end{proof}

\begin{rem}
We emphasize that Theorem \ref{thm:BM} shows that one can reconstruct
$\Gamma_{\omega}$ from $m^{\pm}_{\omega}\left(z\right)$ assuming
that the possible tiles $\left(\Ta\right)_{a\in A}$ are already known.
It is natural to ask when one can recover $\Gamma_{\omega}$ from
$m^{\pm}_{\omega}\left(z\right)$ without apriori knowing the possible
tiles. This is closely tied to recovering the individual (compact)
tiles from their respective $m$-functions. This question has been
addressed in works such as \cite{AvdKur2_ipi10,AvdKur_ipi08,Kurasov_inverse_2009}.
This can, for instance, be done assuming that the tiles $\left(\Ta\right)_{a\in\A}$
are trees, but in general counterexamples exist.
\end{rem}

The following is an immediate corollary of the TPO expansion for $m^{\pm}_{\omega}\left(z\right)$
and the Borg-Marchenko theorem:
\begin{cor}
\label{cor:loc-uni}$\omega_{n}\rightarrow\omega$ in the product
topology on $\A^{\Z}$ if and only if $m^{\pm}_{\omega_{n}}\left(z\right)\rightarrow m^{\pm}_{\omega}\left(z\right)$
locally uniformly. In other words, the mapping $\omega\mapsto m^{\pm}_{\omega}\left(z\right)$
is bijective and continuous.
\end{cor}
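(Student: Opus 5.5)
Injectivity of $\omega\mapsto(m^+_\omega,m^-_\omega)$ is exactly Theorem \ref{thm:BM}. So the plan reduces to proving continuity in both directions, using the TPO expansion of Theorem \ref{thm:periodic-orbits} together with finite propagation speed.

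\emph{Forward direction.} Suppose $\omega_n\to\omega$ in the product topology. Then for every $N$ there is $n_0$ such that $\omega_n|_{[-N,N]}=\omega|_{[-N,N]}$ for $n\ge n_0$, so the graphs $\Gamma^{\pm}_{\omega_n}$ and $\Gamma^{\pm}_\omega$ agree on a ball of radius $R_N\ge N\ell_0$ around the origin. A TPO of metric length $\ell_p<2R_N$ stays inside that ball, and its amplitude $A_p$ only involves degrees of vertices strictly inside the ball. The boundary vertex where the $(N{+}1)$st tile attaches is at distance at least $R_N$; a TPO that scatters there already has $\ell_p\ge 2R_N$. Hence the expansions (\ref{eq:periodic-m}) for $\omega_n$ and $\omega$ coincide up to the terms with $\ell_p<2R_N$.

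It remains to bound the tails uniformly, and I expect this to be the main obstacle. I would use that $|A_p|\le 1$, since every entry of $S_v$ has modulus at most $1$, and that the number of TPOs with $|p|=j$ is at most $D^{j+1}$, where $D$ bounds the degrees. Since $\ell_p\ge \ell_0|p|$, the sum converges absolutely only for $\mathrm{Re}\sqrt{-z}>\log D/\ell_0$, and there the tail is $O\big(e^{-(\mathrm{Re}\sqrt{-z}-\log D/\ell_0)2R_N}\big)$. So on the region $\{\mathrm{Re}\sqrt{-z}>c\}$ with $c$ large, we get $m^\pm_{\omega_n}\to m^\pm_\omega$ uniformly on compacts.

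To reach all of $\C_+$, I would invoke a normal-families argument. The functions $m^\pm_{\omega_n}$ are Herglotz, hence form a normal family on $\C_+$: after composing with a Cayley transform they map into the disk, and a nonconstant limit stays in $\C_+$. Any subsequential locally uniform limit must agree with $m^\pm_\omega$ on the open set where we have convergence, so by the identity theorem it equals $m^\pm_\omega$. Therefore the whole sequence converges locally uniformly on $\C_+$. If one prefers to avoid absolute convergence of the TPO series, an alternative is to compare the Weyl solutions $f^\pm$ directly: they are determined by the local transfer data up to an exponentially small error controlled by the Combes--Thomas decay of the resolvent. I would use the normal-families route.

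\emph{Converse direction.} Suppose $m^\pm_{\omega_n}\to m^\pm_\omega$ locally uniformly. Since $\A^\Z$ is compact, every subsequence of $(\omega_n)$ has a further subsequence converging to some $\omega'$. By the forward direction, $m^\pm_{\omega_{n_k}}\to m^\pm_{\omega'}$, so $m^\pm_{\omega'}=m^\pm_\omega$, and Theorem \ref{thm:BM} gives $\omega'=\omega$. Since every subsequence has a further subsequence converging to $\omega$, the whole sequence converges to $\omega$. This also exhibits $\omega\mapsto m^\pm_\omega$ as a continuous bijection from a compact space onto its image, hence a homeomorphism onto that image.
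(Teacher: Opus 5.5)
Your proposal is correct and follows the route the paper has in mind. The paper gives no argument beyond calling the statement an immediate consequence of the TPO expansion (Theorem \ref{thm:periodic-orbits}) and Theorem \ref{thm:BM}; your locality-plus-tail estimate on the region $\{\mathrm{Re}\sqrt{-z}>c\}$, the normal-families and identity-theorem extension to all of $\C_+$, and the compactness-plus-injectivity argument for the converse supply exactly the details it leaves implicit. Your argument uses only that the coefficient of $e^{-\sqrt{-z}\,\ell}$ is determined by the ball of radius $\ell/2$ about $o$ and grows at most exponentially in $\ell$, not the precise values of the $A_p$, so it does not depend on the exact form of the amplitudes.
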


This result will be used in the proof of the oracle theorem (Theorem
\ref{thm:oracle}) to pass from identities of $m$-functions to continuity
of the corresponding restriction maps.

The proof of the Borg-Marchenko result above was tailored for tiling
graphs which satisfy the edge length condition of Assumption \ref{assu:no-deg-2}
(which is a generic assumption). There are many other families of
tiles which do not satisfy this assumption. Nevertheless, a similar
Borg-Marchenko result may be proven for those as long as the TPOs
of the different tiles are ``sufficiently distinguishable''. One
class of graphs whose tiles do not satisfy the edge length condition
of Assumption \ref{assu:no-deg-2} is the class of decorated $\Z$-graphs
from Definition \ref{def: decorated lines}. If we adjust the edge
length assumption to this class of tiles, we may get the following
Borg-Marchenko result.
\begin{assumption}
\label{assu:deco-deg}Let $\Gamma_{\omega}$ be a decorated $\Z$-graph.
Suppose that the decorations $\left(\D_{a}\right)_{a\in\A}$ satisfy
that at most one decoration is trivial (i.e., a single vertex), and
that the decorations have no vertices of degree $2$, except for possibly
the base vertex $u_{a}$.
\end{assumption}

\begin{prop}
\label{prop:BM-decorated}Let $\Gamma_{\omega}$ be a decorated $\Z$-graph
as in Assumption \ref{assu:deco-deg}. Suppose that for each $a\in\A$,
the quantity
\begin{equation}
\ell^{a}_{\min}:=\min\left\{ \ell_{e}:e\sim v_{a}\right\} \label{eq:lmin-decorated-line}
\end{equation}
is realized by a unique edge, and that $\ell^{a}_{\min}\neq\ell^{a'}_{\min}$
for all $a\neq a'$. Then for each $\omega\in\Omega$, the pair of
$m$-functions $m^{\pm}_{\omega}\left(z\right)$ uniquely determines
the sequence $\omega$.
\end{prop}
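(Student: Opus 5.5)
The plan is to repeat the inductive reconstruction from the proof of Theorem~\ref{thm:BM}, reading off from the TPO expansion (Theorem~\ref{thm:periodic-orbits}) of $m^{+}_{\omega}(z)$ the successive letters $\omega(0),\omega(1),\dots$; $m^{-}_{\omega}$ is treated symmetrically. The decorated structure actually simplifies the geometry. The spine $\Z$ consists of unit edges (two half-edges of length $\nicefrac{1}{2}$ meeting at a degree-$2$ vertex, which we remove as in the remark after Assumption~\ref{assu:no-deg-2}). The origin $o$ is the left endpoint of the first dangling half-edge, i.e.\ the midpoint of a spine edge. So every TPO of $\Gamma^{+}_{\omega}$ either stays in the part of the spine to the right of $o$, or walks distance $n+\nicefrac12$ along the spine to the base vertex $v_{\omega(n)}$ and makes excursions into decorations.

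For the base step, we look at the shortest TPOs. A TPO of length exactly $1$ goes from $o$ to $v_{\omega(0)}$ and back. Its amplitude involves $\left[S_{v}\right]_{\hat e,e}=\frac{2}{\deg v}-1$, where $\deg v=2+\deg_{\D_{\omega(0)}}(v_{\omega(0)})$. Hence the coefficient of $e^{-\sqrt{-z}}$ in (\ref{eq:periodic-m}) reveals $\deg(v_{\omega(0)})$. This already singles out the trivial decoration, which is the only one with $\deg=2$ and coefficient $0$; by Assumption~\ref{assu:deco-deg} there is at most one trivial decoration. Otherwise we restrict to the subset $\A_0$ of letters with the observed base degree.

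Next, for $a\in\A_0$ in increasing order of $\ell^{a}_{\min}$, we examine the coefficient of $e^{-\sqrt{-z}(1+2\ell^{a}_{\min})}$. Write $\ell^{a}_{\min}$ for the unique shortest decoration edge at $v_a$. The TPOs contributing at that length are of two kinds. The first kind goes $o\to v_{\omega(0)}$, traverses $e_{\min}$ back and forth, and returns; since the decoration edge ends at a vertex of degree $\neq2$ (or of degree $1$), its amplitude is a nonzero product of the form $\frac{2}{\deg(v_{\min})}-1$ times known spine factors. The second kind consists of spurious orbits made of spine excursions, whose lengths are $1+2k$ for integers $k$, together with loops inside earlier-known pieces.

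We handle the second kind by computing their contribution explicitly from the data already known, exactly as $\tilde A_a$ was computed in the proof of Theorem~\ref{thm:BM}. Alternatively we exploit that we compare the actual coefficient with this known contribution. The first index $a$ at which the coefficients disagree is $\omega(0)$, because $\ell^{a}_{\min}$ is uniquely realized and the values $\ell^{a}_{\min}$ are pairwise distinct.

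The induction step repeats this with the base path $o\to v_{\omega(n)}$ of length $n+\nicefrac12$. First, the coefficient at length $2n+1$, after subtracting the explicitly known contributions of TPOs supported in the already-determined region, gives $\deg(v_{\omega(n)})$. Then the lengths $2n+1+2\ell^{a}_{\min}$ determine $\omega(n)$. Any TPO of such a length that enters the $(n+1)$st decoration must use the minimal spine path and the edge $e_{\min}$, since any other route is strictly longer.

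The main obstacle is ensuring that the new contributions cannot cancel against, or coincide in length with, orbits entirely inside the known region. Unlike Assumption~\ref{assu:no-deg-2}, we have no arithmetic condition excluding $\ell^{a}_{\min}$ from being a half-integer or a combination of known loop lengths. I would argue that this is harmless: known-region orbits contribute a \emph{computable} amount. The new orbits entering tile $n$ at the given length are exactly the $\deg(v_{\omega(n)})$-many tail variants of the single minimal excursion, up to multiplicities of the spine path, and their total amplitude is a nonzero explicit product. The difference of coefficients is therefore nonzero precisely when $a=\omega(n)$. This requires a careful check that the multiplicity of shortest spine paths (there is exactly one, since the spine is a line and decorations hang off it) does not produce sign cancellations; I expect this check to be the only delicate point.
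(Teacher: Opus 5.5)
Your scheme is the one the paper has in mind: it says the proof is completely analogous to that of Theorem~\ref{thm:BM}. Your first sub-step is sound, because the only orbit of length $2n+1$ reaching $v_{\omega(n)}$ is the straight spine path, so that coefficient yields $\deg(v_{\omega(n)})$.

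The second sub-step has a genuine gap. You assert that every orbit of length $2n+1+2\ell^{a}_{\min}$, other than the excursion along $e_{\min}\subset\D_{\omega(n)}$, contributes an amount computable from the data already known. This overlooks orbits that pass $v_{\omega(n)}$ and continue to the right along the spine. They backscatter at $v_{\omega(n+1)},v_{\omega(n+2)},\dots$ or enter the decorations there, so their amplitudes depend on letters not yet determined. Already in your base step, the spine excursion of length $3$ carries the factor $\frac{2}{\deg(v_{\omega(1)})}-1$. The lengths of these orbits are $2n+1+j+\sum_e k_e\ell_e$ with an integer $j\ge 2$ and $k_e\in\mathbb{Z}_{\ge0}$. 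Nothing in the hypotheses prevents $2\ell^{a}_{\min}$ from having this form.

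Concretely, take a Sturmian subshift with $\D_a$, $\D_b$ dangling edges of lengths $1$ and $\tfrac{3}{2}$ (all hypotheses hold), and $\omega(0)=b$.
\begin{enumerate}
\item Then $\A_0=\{a,b\}$, and you test $a$ first, at length $3$.
\item No orbit entering $\D_b$ is shorter than $4$.
\item The orbit $o\to v_{\omega(0)}\to v_{\omega(1)}\to v_{\omega(0)}\to o$ has length $3$, and its amplitude contains $\tfrac{2}{3}\cdot\bigl(\tfrac{2}{3}-1\bigr)\cdot\tfrac{2}{3}\neq0$, since $\omega(1)\in\{a,b\}$.
\end{enumerate}
So the coefficient differs from the contribution of $[o,v_{\omega(0)}]$, and your rule returns $\omega(0)=a$, which is wrong.

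Treating $\deg(v_{\omega(n+1)})$ as one more unknown at that length does not rescue this. Suppose $\ell^{a}_{\min}=1$ with $e_{\min}$ ending at a degree-$3$ vertex, $b$ has the same base degree and $\ell^{b}_{\min}>1$, and some decoration is trivial. Then ``$\omega(n)=a$, $\omega(n+1)$ trivial'' and ``$\omega(n)=b$, $\deg(v_{\omega(n+1)})=3$'' give the same coefficient at length $2n+3$.

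In Theorem~\ref{thm:BM} this cannot happen. There, every new edge at the attachment vertex belongs to the new tile, so $\ell^{\omega(n)}_{\min}$ bounds every new excursion from below. Assumption~\ref{assu:no-deg-2}(2) then removes coincidences with orbits crossing several tiles. At $v_{\omega(n)}$ there is an extra new direction, the spine to the right, which $\ell^{\omega(n)}_{\min}$ does not control. You therefore need a substitute for Assumption~\ref{assu:no-deg-2}(2). Either of the following works.
\begin{enumerate}
\item If $\ell^{a}_{\min}<1$ for all nontrivial $a$, every rightward orbit exceeds length $2n+1$ by at least $2>2\ell^{a}_{\min}$, and your argument goes through verbatim.
\item A Baire-generic, full-measure alternative, in line with Corollary~\ref{cor:generic-BM-decorated}, is to require $2\ell^{a}_{\min}\neq j+\sum_e k_e\ell_e$ for all integers $j\ge2$ and $k_e\ge0$. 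Since $j\neq0$, each such relation cuts out at most a proper affine hyperplane in the space of decoration lengths.
\end{enumerate}
Under either condition, for $\ell^{a}_{\min}<\ell^{\omega(n)}_{\min}$ no orbit leaving the determined region has length $2n+1+2\ell^{a}_{\min}$. For $a=\omega(n)$, the only such orbit is the intended excursion.

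By contrast, the point you single out as delicate is harmless. The shortest path from $o$ to $v_{\omega(n)}$ is the unique spine path, so the intended excursion contributes a single nonzero product. Note also that your lower bound $2\ell^{\omega(n)}_{\min}$ for excursions into $\D_{\omega(n)}$ presupposes that $e_{\min}$ is not a loop.
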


\begin{cor}
\label{cor:generic-BM-decorated}For a Baire-generic and Lebesgue
almost-sure choice of the decoration edge lengths, $m^{\pm}_{\omega}\left(z\right)$
uniquely determine $\omega$ for decorated $\Z$-graphs.
\end{cor}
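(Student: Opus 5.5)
The statement to prove is Corollary~\ref{cor:generic-BM-decorated}. Since Proposition~\ref{prop:BM-decorated} is assumed, the plan is to show that its hypotheses on the decoration edge lengths hold on a Baire-generic, Lebesgue-full set. Fix the combinatorial structure of the decorations $(\D_{a},v_{a})_{a\in\A}$ as in Assumption~\ref{assu:deco-deg}, and let $\vec{\ell}\in\R^{N}_{+}$ list all decoration edge lengths. The condition required in Proposition~\ref{prop:BM-decorated} has two parts:
\begin{itemize}
\item[(i)] for each $a$, $\min\{\ell_{e}:e\sim v_{a}\}$ is attained by a unique edge;
\item[(ii)] $\ell^{a}_{\min}\neq\ell^{a'}_{\min}$ for all $a\neq a'$.
\end{itemize}
Both parts follow once no two distinct decoration edges have equal length. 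Indeed, if two edges incident to $v_{a}$ have different lengths, the minimum is unique. If the minimizing edges for $a\neq a'$ are distinct edges (they lie in different decorations), their lengths differ. A trivial decoration has no incident edges, so it imposes no constraint; by Assumption~\ref{assu:deco-deg} there is at most one such decoration. A loop at $v_{a}$ counts as a single edge, so it causes no ambiguity.

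Let $G=\bigcap_{e\neq e'}\{\vec{\ell}:\ell_{e}\neq\ell_{e'}\}$, a finite intersection. Each set in the intersection is the complement of a hyperplane $\{\ell_{e}=\ell_{e'}\}$ in $\R^{N}_{+}$. Such a complement is open, and it is dense because the hyperplane has empty interior. The hyperplane also has Lebesgue measure zero. Hence $G$ is open and dense, so in particular Baire-generic, and its complement is a finite union of null sets. The preceding paragraph shows that every $\vec{\ell}\in G$ satisfies (i) and (ii), so Proposition~\ref{prop:BM-decorated} applies and $m^{\pm}_{\omega}$ determine $\omega$.

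The only point requiring care is edge bookkeeping. We must confirm that the two dangling half-edges used to embed the decorations into the $\Z$-line are not among the edges counted in $\ell^{a}_{\min}$. By \eqref{eq:lmin-decorated-line}, $\ell^{a}_{\min}$ is taken over edges of $\D_{a}$ at $v_{a}$. Those half-edges all have the fixed length $\nicefrac{1}{2}$ and are not varied, so they do not enter the genericity argument. Beyond this, the argument is routine, and I do not anticipate a genuine obstacle.
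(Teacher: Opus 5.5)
Your argument is correct and follows the route the paper intends. The corollary is stated as an immediate consequence of Proposition~\ref{prop:BM-decorated}, whose edge-length hypotheses (uniqueness of $\ell^{a}_{\min}$ and $\ell^{a}_{\min}\neq\ell^{a'}_{\min}$) can fail only on a finite union of hyperplanes $\{\ell_{e}=\ell_{e'}\}$, so they hold on an open dense set of full measure. Your stronger sufficient condition (all decoration edge lengths pairwise distinct) and your remarks on the trivial decoration and on excluding the fixed half-edges of length $\nicefrac{1}{2}$ from $\ell^{a}_{\min}$ are the right bookkeeping for this.
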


The proof of Proposition~\ref{prop:BM-decorated} is completely analogous
to that of Theorem \ref{thm:BM}, and appears in the appendix of the
PhD thesis \cite[appendix A]{Sofer2026}.

\section{Dynamical characterization of $\sigma\left(H_{\Omega}\right)$ \label{sec:Kotani}}

In this section, we present several important spectral and dynamical
properties of tiling graphs that will be used in the proof of Theorem
\ref{thm:TMP}.

\subsection{Transfer matrices and the cocycle\label{subsec:Transfer-matrices}}

A basic tool which is required for our analysis is transfer matrices.
We give a brief review of those, and refer to \cite{damFil_book22}
for a more thorough overview.

As always, we fix the minimal subshift $\left(\Omega,\sft\right)$,
with an associated family of metric tiling graphs $\left(\Gamma_{\omega}\right)_{\omega\in\Omega}$.
Consider the following ODE on $\Gamma_{\omega}$:
\begin{equation}
-\frac{d^{2}}{dx^{2}}\phi_{\omega}=E\phi_{\omega}\,\,\,\,(E\in\mathbb{R}),\label{eq:ODE-transfer}
\end{equation}
with the Kirchhoff condition imposed at all vertices of $\Gamma_{\omega}$.
For a solution $\phi_{\omega}$ to (\ref{eq:ODE-transfer}), we denote
\begin{equation}
\Phi_{\omega}:=\left(\begin{array}{c}
\phi_{\omega}\left(o\right)\\
\sum_{{e\in\Gamma^{+}_{\omega}:\,e\sim o}}\left.\partial\phi_{\omega}\right|_{e}\left(o\right)
\end{array}\right)\in\C^{2},\label{eq:initial-ode}
\end{equation}
where the sum is taken over edges incident to the origin from the
right, and the derivatives are taken in the positive direction. Standard
ODE theory then shows that for all but a discrete set of $E\in\mathbb{R}$
(described after Definition \ref{def:Bad-energies}), there exists
a unique matrix $\mw\in SL\left(2,\mathbb{C}\right)$ such that for
any solution $\phi_{\omega}$ to (\ref{eq:ODE-transfer}),
\begin{equation}
\Phi_{\sft\omega}=\mw\Phi_{\omega}.\label{eq:trans-mat}
\end{equation}
The matrix $\mw$ is known as the one-step transfer matrix (or monodromy
matrix). One can similarly define the $n$-step transfer matrix by
\begin{equation}
\mnw:=\begin{cases}
\mtw{\sft^{n-1}}\cdot...\cdot\mw, & n\geq1,\\
I_{2}, & n=0,\\
\mtw{\sft^{n}}^{-1}\cdot...\cdot\mtw{\sft^{-1}}^{-1}, & n\leq-1.
\end{cases}\label{eq:trans-mat-n}
\end{equation}

The transfer matrices allow to define a dynamical system on $SL\left(2,\C\right)$
(known as a cocycle) through
\begin{equation}
\left(\omega,n\right)\mapsto\mnw.\label{eq:cocycle}
\end{equation}

\begin{rem}
Since neighboring tiles are attached to one another through a single
vertex, our cocycle consists of $2\times2$ matrices. If one allows
for more than a single attachment vertex, the cocycle will consist
of larger matrices, which is much more technically challenging. We
do not pursue this direction here, see for instance \cite{Becker}.
\end{rem}

\bigskip

There is a discrete set of values $E\in\R$ such that there exists
a nontrivial solution to (\ref{eq:ODE-transfer}) with initial condition
$\Phi_{\omega}=\vec{0}$ for some $\omega\in\Omega$ (see e.g., \cite[thm. 2.1 and cor. 2.4]{BanBerSmi_ahp12}).
Hence, the cocycle definition via (\ref{eq:trans-mat}) fails for
this set of energies (which are sometimes called resonances).
\begin{defn}
\label{def:Bad-energies}Denote by $\B\subset\R$ the set of energies
$E\in\R$ for which the cocycle $\mathcal{M}\left(\cdot,E\right)$
is not well-defined.
\end{defn}

In general, the set $\B$ may intersect $\sigma\left(H_{\Omega}\right)$
nontrivially -- for instance, this is the case for the aperiodic
antitrees studied in \cite{DamFanSuk_ahp20}. At such spectral points,
the analysis of $\sigma\left(H_{\Omega}\right)$ through the transfer
matrix applied below breaks down. However, since the set $\B$ is
discrete, it is enough to analyze $\sigma\left(H_{\Omega}\right)$
on $\R\backslash\B$ in order to prove that $\sigma\left(H_{\Omega}\right)$
is of Lebesgue measure zero. In Subsection \ref{subsec:Generic-Cantor-spectrum},
we show that generically $\B\cap\sigma\left(H_{\Omega}\right)=\varnothing$
for decorated $\Z$-graphs.

\bigskip

\subsection{Characterization of the spectrum via the Lyapunov exponent}

In this subsection, we provide a dynamical characterization of the
spectrum, by showing that $\sigma\left(H_{\omega}\right)$ is equal
(up to intersection with $\B$) to the zero set of the Lyapunov exponent
associated with the cocycle. Such a result has been proven in the
ergodic Schrödinger operator setting for many systems, including Sturmian
Hamiltonians and the Almost-Mathieu operator (cf. \cite{Damanik_PSPM_2007}).

Given $E\in\R\backslash\B$ and our cocycle $\mw$, Kingman's ergodic
theorem ensures that the quantity
\begin{equation}
L_{\omega}\left(E\right):=\lim_{n\rightarrow\infty}\frac{1}{n}\log\left\Vert \mnw\right\Vert ,\label{eq:Ly-exp}
\end{equation}
is identical for almost every $\omega\in\Omega$. We denote this almost-sure
value by $L_{\Omega}\left(E\right)$, and call $L_{\Omega}\left(E\right)$
the \textit{Lyapunov exponent} of the cocycle.
\begin{thm}
\label{thm:Kotani-eq} There exists a closed subset $\sigma\left(H_{\Omega}\right)\subset\R$
such that $\sigma\left(H_{\omega}\right)=\sigma\left(H_{\Omega}\right)$
for all $\omega\in\Omega$, and furthermore,
\begin{equation}
\sigma\left(H_{\Omega}\right)\backslash\B=\mathcal{Z}:=\set{E\in\R\backslash\B}{L_{\Omega}\left(E\right)=0}.\label{eq:sigma=00003Dz}
\end{equation}
\end{thm}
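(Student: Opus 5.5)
We prove the two parts of Theorem~\ref{thm:Kotani-eq} separately: first that the spectrum does not depend on $\omega$, then the two inclusions in (\ref{eq:sigma=00003Dz}).

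\textbf{Independence of $\omega$.} We use covariance (\ref{eq:covariant}) together with minimality.

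1. By (\ref{eq:covariant}), $\sigma(H_{\sft^{n}\omega})=\sigma(H_{\omega})$ for every $n$.

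2. Next we show lower semicontinuity of the spectrum in $\omega$. Suppose $\omega_{k}\to\omega$ in $\A^{\Z}$ and $E\in\sigma(H_{\omega})$. By Weyl's criterion there are compactly supported approximate eigenfunctions $f$ with $\|(H_{\omega}-E)f\|\le\varepsilon\|f\|$. Such an $f$ can be taken supported in finitely many tiles, inside the domain and satisfying Kirchhoff conditions, e.g.\ by cutting off smoothly on edges interior to tiles. For $k$ large, $\Gamma_{\omega_{k}}$ and $\Gamma_{\omega}$ agree on those tiles, so $f$ transplants to $\Gamma_{\omega_{k}}$. Hence $E\in\sigma(H_{\omega_{k}})$, up to closure.

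3. Minimality means every orbit $\{\sft^{n}\omega'\}$ is dense. Apply step 2 with $\omega_{k}=\sft^{n_{k}}\omega'$ converging to $\omega$. This gives $\sigma(H_{\omega})\subset\sigma(H_{\omega'})$, and by symmetry the two spectra are equal.

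\textbf{Inclusion $\sigma(H_{\Omega})\setminus\B\subset\mathcal{Z}$.} We argue by contrapositive: if $E\notin\B$ and $L_{\Omega}(E)>0$, then $E\notin\sigma(H_{\Omega})$.

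1. We first establish uniformity of the cocycle. Boshernitzan's condition, via the Damanik--Lenz theorem for locally constant $SL(2)$ cocycles over Boshernitzan subshifts, gives that $\frac1n\log\|\mnw\|\to L_{\Omega}(E)$ uniformly in $\omega$. The cocycle is locally constant because $\mw$ depends only on $\omega(0)$. Entries are real for real $E$, so the cocycle is $SL(2,\R)$-valued.

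2. Uniform convergence with positive exponent yields uniform hyperbolicity, by the Lenz / Damanik--Lenz result: a uniform cocycle with $L>0$ is uniformly hyperbolic.

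3. Uniform hyperbolicity gives, for every $\omega$, solutions decaying exponentially on each half-line, and these lie in $L^{2}$. Within a tile the solution is controlled by the vertex data $\Phi$, since $E\notin\B$ and tiles are drawn from finitely many compact types. We then build a Green's function from the $\pm$ decaying solutions. Its kernel decays exponentially, so $(H_{\omega}-E)^{-1}$ is bounded and $E\notin\sigma$.

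This inclusion is the main obstacle. Two points need care: the transfer from decay at vertices to $L^{2}$ decay on whole tiles, and the Green's function construction on a graph rather than on a line. I would handle both by cutting at the attachment vertices, where everything reduces to the two scalars $\Phi$. An alternative route is Johnson's theorem adapted to this setting.

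\textbf{Inclusion $\mathcal{Z}\subset\sigma(H_{\Omega})$.} Conversely, suppose $E\notin\sigma(H_{\Omega})$ and $E\notin\B$.

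1. The $m$-functions $m^{\pm}_{\omega}$ extend analytically and real-valued across a neighborhood of $E$. The Weyl solutions at $E$ are then $L^{2}$ on half-graphs; by Combes--Thomas they decay exponentially, and so do their vertex data $\Phi$.

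2. This exhibits two distinct invariant directions, namely the stable directions for $n\to\pm\infty$. They are distinct because otherwise $E$ would be an eigenvalue, which is impossible since $E\notin\sigma$.

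3. The invariant directions vary continuously in $\omega$. Together with uniform exponential decay (uniform by compactness), this gives a dominated splitting, hence uniform hyperbolicity and $L_{\Omega}(E)>0$.

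Combining the two inclusions gives (\ref{eq:sigma=00003Dz}).
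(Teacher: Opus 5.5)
Your proposal is correct in outline and shares the paper's skeleton. The $\omega$-independence is the standard minimality argument the paper cites, and Boshernitzan gives uniform convergence, hence $\NUH=\varnothing$, reducing everything to $\sigma(H_\Omega)\setminus\B=(\R\setminus\B)\setminus\UH$. You prove both inclusions differently, however.

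For $\UH\cap\sigma(H_\Omega)=\varnothing$, the paper never builds a resolvent. It combines the absence of subexponentially bounded solutions on $\UH$ (Lemma~\ref{fact:fact1}) with the spectrally almost sure existence of such generalized eigenfunctions (Lemma~\ref{fact:fact2}, via Boutet de Monvel--Stollmann and ultracontractivity), so open intervals in $\UH$ are spectrally null. Your Green's function route avoids that machinery at the cost of the within-tile work you flagged, and it does go through. For $y$ in a tile, solve $(-\partial_x^2-E)g=\delta_y$ on that tile; this is possible off $\B$ after imposing suitable Robin conditions at its two attachment vertices. Then glue $c_-u_-$ on the left and $c_+u_+$ on the right. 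The matching system at the attachment vertices is uniquely solvable, since its homogeneous version is a global solution decaying at both ends, which $\UH$ excludes. Finitely many tile types give uniform exponential bounds, and Schur's test gives boundedness.

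For $\mathcal Z\subset\sigma(H_\Omega)$, the paper's argument is much lighter: uniform convergence of $\frac1n\log\|\mnw\|$ to $0$ makes every solution subexponentially bounded, and Lemma~\ref{lem:Subexponential} supplies a Weyl sequence. Your contrapositive is Johnson's theorem (resolvent set $\subset\UH$). It costs $\omega$-uniform Combes--Thomas bounds and a uniform lower bound on the angle between the two stable directions, but it needs no recurrence hypothesis. As a result, your route proves the Johnson-type identity $\sigma(H_\Omega)\setminus\B=(\R\setminus\B)\setminus\UH$ for any minimal subshift, and confines Boshernitzan's role to ruling out $\NUH$.

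One small correction: across a gap, $m^\pm_\omega$ extend only meromorphically, since half-graph Dirichlet eigenvalues may lie in the gap. So take the invariant directions in $\mathbb{RP}^1$, e.g.\ as the lines spanned by the $\Phi$-vectors of the decaying solutions, rather than as finite values of $m^\pm_\omega(E)$.
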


The proof of Theorem \ref{thm:Kotani-eq} appears at the end of this
subsection. To prove it, we first need to present several lemmas.
These lemmas are adaptations of existing techniques to quantum graphs.
Since in this subsection we are mostly adapting many of the techniques
presented in previous works to the quantum graph setting, some of
the results will not be proven completely, and the proofs will only
be outlined, with reference to the relevant works.
\begin{defn}
Define
\begin{align}
\UH & :=\set{E\in\R\backslash\B}{\exists\gamma>1,C>0;\left\Vert \mnw\right\Vert \geq C\gamma^{\left|n\right|},\forall\omega\in\Omega,\forall n\in\mathbb{Z}},\label{eq:uh}\\
\NUH & :=\set{E\in\R\backslash\B}{L_{\Omega}\left(E\right)>0}\backslash\UH.\label{eq:NUH}
\end{align}
It is easy to see that $\R\backslash\B=\UH\sqcup\NUH\sqcup\mathcal{Z}$,
and this partition into uniformly hyperbolic, non-uniformly hyperbolic
and non-hyperbolic energies forms the basis to our spectral analysis.
\end{defn}

\begin{lem}[{\cite[thm. 1.2]{MR3543643}}]
\label{fact:fact1}Let $E\in\UH$. Then there is no nontrivial solution
$\phi_{\omega}$ of (\ref{eq:ODE-transfer}) such that the vector
$\Phi_{n}$ in (\ref{eq:initial-ode}) is subexponentially bounded
in both directions in $\Gamma_{\omega}$, i.e. there is no nontrivial
solution such that for every $\varepsilon>0$ there exists $C_{\varepsilon}>0$
with
\begin{equation}
\|\Phi_{n}\|\le C_{\varepsilon}e^{\varepsilon|n|}\qquad\forall n\in\mathbb{Z}.\label{eq:subexponential}
\end{equation}
\end{lem}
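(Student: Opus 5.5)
The plan is to deduce the lemma from the standard dichotomy for uniformly hyperbolic $SL(2,\R)$ cocycles. Fix $E\in\UH$; in particular $E\notin\B$, so the cocycle $\omega\mapsto\mw$ is well defined. Since $E$ is real and $\phi_\omega$ can be taken real, the transfer matrices lie in $SL(2,\R)$. The map $\omega\mapsto\mw$ depends only on $\omega(0)$, so it is locally constant and in particular continuous on the compact space $\Omega$. For any solution $\phi_\omega$ of (\ref{eq:ODE-transfer}), the covariance of the construction gives $\Phi_n:=\Phi_{\sft^n\omega}=\mathcal{M}_n(\omega,E)\Phi_\omega$ for all $n\in\Z$. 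The whole question is therefore about the growth of $\mathcal{M}_n(\omega,E)v$ for a nonzero $v\in\R^2$. Note that $v\neq0$ whenever $\phi_\omega$ is nontrivial, since $E\notin\B$.

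The key input is the characterization of uniform hyperbolicity in \cite[thm.~1.2]{MR3543643}: uniform exponential growth of the norms, $\|\mathcal{M}_n(\omega,E)\|\ge C\gamma^{|n|}$ for all $\omega$ and $n$, is equivalent to the existence of a continuous, $\mathcal{M}$-invariant splitting $\R^2=E^s(\omega)\oplus E^u(\omega)$ with the following property. There are constants $C'>0$ and $\lambda>1$ such that
\[
\|\mathcal{M}_n(\omega,E)v\|\le C'\lambda^{-n}\|v\|\ \ (v\in E^s(\omega),\,n\ge0),\qquad \|\mathcal{M}_{-n}(\omega,E)v\|\le C'\lambda^{-n}\|v\|\ \ (v\in E^u(\omega),\,n\ge0).
\]
I would invoke this directly; it is the one genuinely nontrivial step (a Yoccoz-type argument via the cone criterion), and I would not reprove it. What needs checking is only that our graph cocycle fits its hypotheses, namely continuity over a compact base and values in $SL(2,\R)$, both noted above.

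Given the splitting, decompose $v=v_s+v_u$. Because $E^s(\omega)$ and $E^u(\omega)$ are transversal and depend continuously on $\omega\in\Omega$ (compact), the angle between them is uniformly bounded below. Hence the projections are uniformly bounded, and determinant one together with invariance gives uniform expansion in the complementary direction. If $v_u\ne0$, then $\|\mathcal{M}_n v\|\ge c\lambda^{n}\|v_u\|-C'\lambda^{-n}\|v_s\|$, which grows exponentially as $n\to+\infty$. If $v_u=0$, then $v=v_s\neq0$ and symmetrically $\|\mathcal{M}_{-n}v\|\ge c\lambda^{n}\|v\|$ as $n\to+\infty$. In either case $\|\Phi_n\|\ge c''\lambda^{|n|}$ along one of the two half-lines. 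This contradicts (\ref{eq:subexponential}) once we choose $\varepsilon<\log\lambda$. Therefore no nontrivial solution has $\Phi_n$ subexponentially bounded in both directions, which proves the lemma.
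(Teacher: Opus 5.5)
Your proposal is correct and takes essentially the same route as the paper, whose only proof is the citation of \cite[thm.~1.2]{MR3543643}: you use that result to turn the norm-growth definition of $\UH$ into a uniformly hyperbolic invariant splitting, and then correctly show that every nonzero $\Phi_0$ grows exponentially along one half-line. The appeal to the angle bound and to $\det=1$ is not needed, since forward expansion on $E^u(\omega)$ already follows by applying the backward contraction on $E^u(\sft^n\omega)$ to $\mathcal{M}_n(\omega,E)v_u$.
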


Recall that a maximal spectral measure $\rho_{\omega}$ for $H_{\omega}$
is a Borel measure on $\R$ such that, for every Borel set $A\subset\mathbb{R}$,
\begin{equation}
\rho_{\omega}(A)=0\quad\Longleftrightarrow\quad\chi_{A}(H_{\omega})=0,\label{eq:max-spec}
\end{equation}
where $\chi_{A}(H_{\omega})$ denotes the spectral projection of $H_{\omega}$
associated with $A$. We say that a property holds for spectrally
almost every $E$ if it holds for $\rho_{\omega}$-almost every $E$,
and call $A$ spectrally null if $\rho_{\omega}(A)=0$.
\begin{lem}
\label{fact:fact2} For spectrally almost every $E\in\sigma\left(H_{\omega}\right)$,
there is a nontrivial solution $\phi_{\omega}$ of (\ref{eq:ODE-transfer})
such that for all $\beta>0$,
\begin{equation}
e^{-\beta\cdot\left|x\right|}\phi_{\omega}\left(x\right)\in L^{2}\left(\Gamma_{\omega}\right),\label{eq:L2-decay-2}
\end{equation}
where $\left|x\right|$ denotes the geodesic distance of $x$ from
the origin $o$. In particular, the vectors $\Phi_{n}$ in (\ref{eq:initial-ode})
are subexponentially bounded as in (\ref{eq:subexponential}).
\end{lem}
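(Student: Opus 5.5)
This is a Sch'nol/Berezanskii-type generalized eigenfunction expansion for $H_{\omega}$. I would prove it by identifying a spectral measure under which spectrally almost every energy carries a polynomially bounded generalized eigenfunction; subexponential growth then follows at once. Fix a strictly positive weight $w(x)=(1+|x|)^{-s}$ on $\Gamma_{\omega}$ with $s$ large enough that $w\in L^{2}(\Gamma_{\omega})$. This is possible because the edge lengths are bounded below by $\ell_{0}>0$ and the degrees are bounded, so the volume of the ball of radius $R$ around $o$ grows at most linearly in $R$ (the graph is a chain of finitely many compact tile types). The standard route (cf. \cite{MR3543643}, and the general results of Poerschke--Stolz--Weidmann and Lenz--Teplyaev for quantum graphs) is to show that $w\,(H_{\omega}+1)^{-1}$, or a power of the resolvent, is Hilbert--Schmidt. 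The resolvent kernel is locally like that of $-d^{2}/dx^{2}$ on intervals, so it is bounded on the diagonal uniformly in $x$. Hence the Hilbert--Schmidt norm is controlled by $\int_{\Gamma_{\omega}}w^{2}\,dx<\infty$, after using $(H+1)^{-1}=(H+1)^{-1/2}(H+1)^{-1/2}$ or working with $(H+1)^{-2}$.

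Given the Hilbert--Schmidt property, the abstract expansion theorem yields a maximal spectral measure $\rho_{\omega}$ and, for $\rho_{\omega}$-a.e.\ $E$, a nonzero $\psi_{E}$ with $w\psi_{E}\in L^{2}$ solving $H_{\omega}\psi_{E}=E\psi_{E}$ in the weak (distributional) sense. Next I would upgrade weak to strong. Testing against functions supported in the interior of a single edge gives $-\psi''=E\psi$ there, so $\psi$ is a combination of $\cos(\sqrt{E}x)$ and $\sin(\sqrt{E}x)$ on each edge. Testing against functions in the domain supported near a vertex then recovers continuity and the Kirchhoff sum condition, so $\psi=\phi_{\omega}$ solves (\ref{eq:ODE-transfer}). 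Since $(1+|x|)^{-s}\le C_{\beta}e^{-\beta|x|}$ for every $\beta>0$, we get $e^{-\beta|x|}\phi_{\omega}\in L^{2}$, which is (\ref{eq:L2-decay-2}).

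For the final claim about $\Phi_{n}$, I would use local elliptic control: on a compact tile, a solution of $-u''=Eu$ has its values and derivatives at vertices bounded by $C(E)\|u\|_{L^{2}(\text{tile})}$, with $C$ uniform because there are finitely many tile types. The tile $\Gamma^{(n)}_{\omega}$ sits at distance at most $C|n|$ from $o$, so $\|\Phi_{n}\|\le C(1+|n|)^{s}\|w\phi\|_{L^{2}}$. This is polynomially bounded, hence satisfies (\ref{eq:subexponential}).

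The main obstacle is the first step, namely verifying the trace-class/Hilbert--Schmidt hypothesis of the expansion theorem for an infinite metric graph. I would handle it through a uniform bound on the resolvent kernel (or heat kernel) diagonal, using the uniform geometry. The technical nuisance of energies in $\B$ is harmless here, since the lemma concerns $\phi_{\omega}$ itself and not transfer matrices.
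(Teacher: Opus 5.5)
Your argument is correct. It rests on the same mechanism as the paper's, namely an expansion in generalized eigenfunctions of Berezanskii--Schnol type, but reaches it by a more hands-on route.

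The paper verifies nothing directly. It quotes the expansion theorem of Boutet de Monvel--Stollmann for generators of Dirichlet forms \cite[thm. 1.1]{BouSto_jram03}, which has two hypotheses: subexponential growth of ball volumes (here linear, from the chain-of-tiles structure), and ultracontractivity of $e^{-tH_{\omega}}$, which the paper imports from \cite[sec. 4.2]{Bifulco2023}.

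You instead use the resolvent form of the expansion theorem with a strictly positive polynomial weight. The step you flag as the main obstacle closes cleanly:
\begin{itemize}
\item The form domain consists of continuous edgewise-$H^{1}$ functions.
\item Because every edge has length at least $\ell_{0}$, the Sobolev bound $\|g\|_{\infty}\le C(\ell_{0})\|g\|_{H^{1}}$ holds uniformly over the graph.
\item Hence $(H_{\omega}+1)^{-1/2}$ is bounded from $L^{2}$ to $L^{\infty}$; equivalently, the resolvent kernel is uniformly bounded on the diagonal.
\item Therefore $w(H_{\omega}+1)^{-1/2}$ is Hilbert--Schmidt, and so is $w(H_{\omega}+1)^{-1}$, through exactly the factorization you indicate.
\end{itemize}
This $L^{2}\to L^{\infty}$ smoothing is precisely the resolvent counterpart of the ultracontractivity the paper cites. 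Strict positivity of $w$ is what makes the resulting spectral measure maximal.

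What your route buys: it is self-contained, and it yields polynomially rather than merely subexponentially bounded generalized eigenfunctions. It also makes explicit two steps the paper leaves implicit. First, your test-function argument shows that the weak generalized eigenfunction is an edgewise solution satisfying continuity and the Kirchhoff condition. Second, your uniform local estimate on a tile turns the weighted $L^{2}$ bound into the stated bound on $\Phi_{n}$.

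What the paper's route buys: it is much shorter, at the price of relying on two external results.
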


\begin{proof}[Proof of Lemma \ref{fact:fact2}]
 By \cite[thm. 1.1]{BouSto_jram03}, for the result to hold, it suffices
to verify two conditions:\\
1. For all $\alpha>0$ and $x_{0}\in\Gamma_{\omega}$,
\begin{equation}
\lim_{R\rightarrow\infty}e^{-\alpha R}\left|B^{\omega}_{R}\left(x_{0}\right)\right|=0,\label{eq:Brx}
\end{equation}
where $B^{\omega}_{R}\left(x_{0}\right)$ is the geodesic ball of
radius $R$ around $x_{0}\in\Gamma_{\omega}$ and $\left|\cdot\right|$
denotes the Lebesgue measure.\\
2. For all $\omega\in\Omega$ and $t>0$, the heat semigroup $e^{-tH_{\omega}}$
is ultracontractive, i.e.,
\begin{equation}
e^{-tH_{\omega}}:L^{2}\left(\Gamma_{\omega}\right)\rightarrow L^{\infty}\left(\Gamma_{\omega}\right).\label{eq:heat-semigroup}
\end{equation}

The first condition holds since the Lebesgue measure of geodesic balls
in $\Gamma_{\omega}$ grows piecewise linearly with $R$. The fact
that the second condition holds is shown in \cite[sec. 4.2]{Bifulco2023}.
\end{proof}

\begin{lem}
\label{lem:Subexponential}Suppose that $\phi_{\omega}$ is a subexponentially
bounded solution of (\ref{eq:ODE-transfer}) on $\Gamma_{\omega}$.
Meaning, for all $\kappa>0$ there is $C>0$ such that 
\begin{equation}
\left|\phi_{\omega}\left(x\right)\right|\leq Ce^{\kappa\left|x\right|},\quad\forall x\in\Gamma_{\omega}.\label{eq:subexp}
\end{equation}
Then $E\in\sigma\left(H_{\omega}\right)$.
\end{lem}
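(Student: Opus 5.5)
This is a Shnol-type result, and I would prove it with a Weyl sequence. By self-adjointness of $H_{\omega}$, it suffices to build $g_{R}\in\mathrm{dom}(H_{\omega})$ with $\|g_{R}\|=1$ and $\|(H_{\omega}-E)g_{R}\|\to0$ along some sequence of radii $R$.

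\textbf{Construction.} Fix a smooth cutoff $\chi:[0,\infty)\to[0,1]$ with $\chi=1$ on $[0,1]$, $\chi=0$ on $[2,\infty)$, and bounded first and second derivatives. Set $\chi_{R}(x):=\chi(|x|/R)$, where $|x|$ is the geodesic distance from $o$, and define $g_{R}:=\chi_{R}\phi_{\omega}/\|\chi_{R}\phi_{\omega}\|$.

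\textbf{Domain membership.} The distance function is piecewise linear along each edge, so $\chi_{R}$ is continuous on $\Gamma_{\omega}$ and piecewise smooth on edges. It is not necessarily $C^{1}$ at the interior points where the geodesic distance has a kink. To avoid this, I would instead take $\chi_{R}$ to be a function that equals $1$ on the union of tiles $\Gamma^{(n)}_{\omega}$ with $|n|\le N$, vanishes on tiles with $|n|\ge2N$, and is smooth along each edge. I would also make it locally constant near every vertex, for instance by letting it vary only in the middle third of each edge. Then $\chi_{R}\phi_{\omega}$ is compactly supported and $H^{2}$ on each edge. It satisfies continuity at every vertex. It satisfies Kirchhoff at every vertex, because near each vertex it is a constant multiple of $\phi_{\omega}$. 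Hence it lies in $\mathrm{dom}(H_{\omega})$. Since edge lengths are bounded above and below and tiles come from a finite alphabet, the derivatives obey $|\chi'|\le C/N$ and $|\chi''|\le C/N^{2}$ uniformly.

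\textbf{Commutator estimate.} On each edge,
\[
(H_{\omega}-E)(\chi\phi_{\omega})=-\chi''\phi_{\omega}-2\chi'\phi_{\omega}',
\]
which is supported in the annulus $A_{N}$ consisting of tiles with $N<|n|<2N$. By interior elliptic estimates on each edge (equivalently, because on an edge of length $\ell\ge\ell_{0}$ the solution is $\alpha\cos(\sqrt{E}x)+\beta\sin(\sqrt{E}x)$), I have
\[
\|\phi_{\omega}'\|_{L^{2}(e)}\le C_{E}\,\|\phi_{\omega}\|_{L^{2}(e)}
\]
with $C_{E}$ uniform over edges. This gives
\[
\|(H_{\omega}-E)(\chi\phi_{\omega})\|\le\frac{C}{N}\,\|\phi_{\omega}\|_{L^{2}(A_{N})}.
\]
Writing $a_{N}:=\|\phi_{\omega}\|_{L^{2}(B_{N})}$ with $B_{N}$ the union of tiles $|n|\le N$, it remains to find $N_{k}\to\infty$ with $a_{2N_{k}}/(N_{k}a_{N_{k}})\to0$.

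\textbf{Main obstacle: the growth argument.} This is the subexponential-growth step, and I would argue by contradiction. Suppose instead that $a_{2N}\ge cN\,a_{N}$ for all large $N$. Iterating along $N=2^{j}N_{0}$ gives $a_{2^{j}N_{0}}\ge c^{j}\,2^{j(j-1)/2}N_{0}^{j}\,a_{N_{0}}$. This is superexponential growth in $2^{j}$ only if it beats $e^{\kappa 2^{j}}$, and it does not: $2^{j^{2}/2}$ is much smaller than $e^{\kappa2^{j}}$. So the pure factor-$N$ gain is too weak. I would therefore instead compare $N$ with $N+M$ for a fixed large $M$. If for every $N$ one had $\|\phi_{\omega}\|^{2}_{L^{2}(A)}\ge\epsilon a_{N}^{2}$ on the shell $N<|n|<N+M$, then $a_{N}$ grows exponentially with a rate tending to $0$ as $M\to\infty$ and $\epsilon$ fixed. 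This is compatible with subexponential bounds, so the standard trick is needed: choose the cutoff with transition width $M=\lfloor\delta N\rfloor$ and use the pointwise bound from the hypothesis, which gives $a_{N}^{2}\le C e^{2\kappa N}N$. If the ratio $\|\phi_{\omega}\|_{L^{2}(N<|n|<N+M)}/a_{N}$ stayed $\ge\epsilon$ along all $N$, the shells would compound to growth $e^{cN/M}$ with $c\approx\epsilon^{2}$. Taking $M$ fixed but large and $\kappa<c/(2M)$ contradicts the hypothesis. The $1/M$ gain from $\chi'$ then sends the Weyl error to $0$ as $M\to\infty$. Finally, $\phi_{\omega}$ is nontrivial, so $a_{N}>0$ for large $N$, which gives $E\in\sigma(H_{\omega})$.
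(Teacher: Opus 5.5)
Your proposal is correct and follows essentially the paper's approach: the paper also builds a Weyl sequence from cut-offs of the subexponentially bounded solution (deferring the details to the thesis), and your fixed-width shell argument closes it properly, namely fixed transition width $M$, exponential growth at rate $\sim\epsilon^{2}/M$ contradicting the hypothesis for $\kappa<c/(2M)$, then $M\to\infty$. There are two harmless slips: the remark that such exponential growth is ``compatible with subexponential bounds'' is false, since it is exactly what the hypothesis rules out (as you then correctly say, so the $M=\lfloor\delta N\rfloor$ detour is unnecessary); and with transitions confined to middle thirds of edges you only get $|\chi''|\le C/N$ (resp.\ $C/M$) rather than $C/N^{2}$, which still suffices.
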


The proof is based on using the subexponential boundedness to construct
a Weyl sequence for $E$. The full details are given in \cite[lem. 4.13]{Sofer2026}.

\begin{proof}[Proof of Theorem \ref{thm:Kotani-eq}]
 The fact that $\sigma\left(H_{\omega}\right)$ is independent of
$\omega$ follows from standard arguments for minimal dynamical systems,
see for instance \cite[cor. 4.5]{Beckus2017}, \cite[prop. 2.1]{DamFilGor_ahp15},
\cite[thm. 5]{GruLenVes_jfa07} and references therein for similar
proofs. We wish to prove the equality (\ref{eq:sigma=00003Dz}). Recall
that $\mathbb{R}\backslash\B=\UH\sqcup\NUH\sqcup\mathcal{Z}$, and
further explore this decomposition as follows. Since the dynamical
system $\left(\Omega,\sft\right)$ satisfies Boshernitzan's condition,
we conclude by \cite[thm. 1]{MR2219271} that the Lyapunov exponent
$L_{\Omega}\left(E\right)$ in (\ref{eq:Ly-exp}) converges uniformly
in $\omega\in\Omega$. This implies that $\NUH=\varnothing$ (see
\cite[app. A]{DAMANIK_LENZ_2025}), and $\mathbb{R}\backslash\B=\UH\sqcup\mathcal{Z}$.
We thus only need to prove that $\sigma\left(H_{\Omega}\right)\backslash\B=\left(\mathbb{R}\backslash\B\right)\backslash\left(\UH\right)$,
and do it by showing both inclusions. Fix arbitrary $\omega\in\Omega$
and assume that $E_{0}\in\UH$. Since $\UH$ is open, we can choose
some open interval $I_{E_{0}}\subset\UH$ around $E_{0}$. For each
$E\in I_{E_{0}}$, Lemma \ref{fact:fact1} rules out a nontrivial
solution of (\ref{eq:ODE-transfer}) on $\Gamma_{\omega}$ whose vectors
$\Phi_{n}$ are subexponentially bounded in both directions. On the
other hand, by Lemma \ref{fact:fact2}, such a solution exists for
spectrally almost every $E\in\sigma\left(H_{\omega}\right)$. Hence
$I_{E_{0}}$ is spectrally null, that is $\chi_{I_{E_{0}}}\left(H_{\omega}\right)=0$.
Since $I_{E_{0}}$ is open and $\sigma\left(H_{\omega}\right)=\text{supp}\left(\rho_{\omega}\right)$,
it follows that $I_{E_{0}}\cap\sigma\left(H_{\omega}\right)=\varnothing$.
In particular $E_{0}\notin\sigma\left(H_{\omega}\right)$ and we get
that $\sigma\left(H_{\Omega}\right)\backslash\B=\sigma\left(H_{\omega}\right)\backslash\B\subset\left(\mathbb{R}\backslash\B\right)\backslash\left(\UH\right)$.

To prove the other inclusion, Lemma \ref{lem:Subexponential} tells
us that it suffices to show that for each $E\in\mathcal{Z}$, the
associated solution of (\ref{eq:ODE-transfer}) has subexponential
growth. But this is true by the definition of $\mathcal{Z}$ and the
uniform convergence of the Lyapunov exponent.
\end{proof}

\section{proof of Theorems \ref{thm:TMP} and \ref{thm:Cantor}\label{sec:Cantor spectrum proof}}

\subsection{The oracle theorem and proof of Theorem \ref{thm:TMP}}

We are now ready to prove Theorem \ref{thm:TMP} (we in fact prove
Theorem \ref{thm:Weaker-generalize-Cantor}, which immediately implies
Theorem \ref{thm:TMP}). For this, we will apply the following fundamental
result by Kotani \cite{Kotani1987}:
\begin{thm}
\label{thm:-Kotani} For almost every $\omega\in\Omega$ (with respect
to the measure $\mu$ from (\ref{eq:Boshernitzan})), the functions
$m^{\omega}_{\pm}$ are reflectionless on $\mathcal{Z}$. In other
words, for almost every $\omega\in\Omega$ and Lebesgue almost every
$E\in\mathcal{Z}$,
\begin{equation}
m^{-}_{\omega}\left(E+i0\right)=-\overline{m^{+}_{\omega}\left(E+i0\right)},\label{eq:reflectionless}
\end{equation}
where $m^{\pm}_{\omega}\left(E+i0\right)=\lim_{\epsilon\rightarrow0^{+}}m^{\pm}_{\omega}\left(E+i\epsilon\right)$.
\end{thm}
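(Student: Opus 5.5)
We plan to prove Theorem \ref{thm:-Kotani} by reducing it to the classical Kotani theory for ergodic Herglotz families. The idea is to build an equivalent one-dimensional object out of the transfer-matrix cocycle $\mathcal{M}(\omega,E)$ and the half-line $m$-functions $m^{\pm}_{\omega}$, so that the standard argument (as in \cite{Kotani1987} and the continuum version in \cite{DamFilGor_ahp15}) applies essentially verbatim.

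\textbf{Step 1: Riccati-type covariance.} Since $f^{+}_{\omega}(\cdot;z)$ restricted to $\Gamma^{+}_{\sft\omega}$ is the Weyl solution there, we get $\Phi_{\sft\omega}=\mathcal{M}(\omega,z)\Phi_{\omega}$ for $z\in\C_{+}$. Here $\mathcal{M}(\omega,z)$ extends analytically off $\R$ by the same ODE construction. Hence $m^{+}_{\sft\omega}$ is the image of $m^{+}_{\omega}$ under the Möbius action of $\mathcal{M}(\omega,z)$, up to the sign convention for the derivative, and the same holds for $m^{-}$. We also record the identity
\[
\log\left|f^{+}_{\omega}(o_{n};z)/f^{+}_{\omega}(o;z)\right|=\sum_{j<n}\log\left|f^{+}_{\sft^{j}\omega}(o_1;z)/f^{+}_{\sft^{j}\omega}(o;z)\right|,
\]
where $o_n$ is the origin of the $n$-th tile. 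Combined with Birkhoff's theorem and the decay of $f^{+}$, this gives
\[
-L_{\Omega}(z)=\mathbb{E}_{\mu}\log\left|\left(\mathcal{M}(\omega,z)\binom{1}{m^{+}_{\omega}(z)}\right)_{1}\right|.
\]

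\textbf{Step 2: Kotani's key inequality.} We derive a Wronskian/Green identity on a single tile. Taking imaginary parts of the Kirchhoff–Green formula $\mathrm{Im}\,z\int_{T}|f|^{2}=\mathrm{Im}(\bar f f')|_{\partial T}$ relates $\mathrm{Im}\,m^{+}_{\omega}$ and $\mathrm{Im}\,m^{+}_{\sft\omega}$ to $\mathrm{Im}\,z$ times the $L^{2}$ mass on the first tile. Averaging over $\mu$, we aim for the classical bound
\[
\mathbb{E}_{\mu}\left[\frac{\mathrm{Im}\,z\,\int_{T_{\omega(0)}}|f^{+}_{\omega}|^{2}}{\mathrm{Im}\,m^{+}_{\omega}(z)}\right]\le\frac{L_{\Omega}(z)}{\mathrm{Im}\,z}\cdot\text{(const)},
\]
and the analogous bound for $m^{-}$. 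The edge lengths are bounded below by $\ell_{0}$ and there are finitely many tiles, so the tile $L^{2}$ mass is comparable to $|f(o)|^{2}+|f'(o)|^{2}$. This is exactly the graph-analogue of the estimate in \cite{DamFilGor_ahp15}. Letting $\mathrm{Im}\,z\to0$ on the set where $L_{\Omega}(E)=0$, we then obtain, via Fatou and the standard $\mathrm{Im}(m^{+}+m^{-})$ computation, $\mathrm{Re}\,m^{+}_{\omega}(E+i0)=-\mathrm{Re}\,m^{-}_{\omega}(E+i0)$ and $\mathrm{Im}\,m^{+}_{\omega}=\mathrm{Im}\,m^{-}_{\omega}$ for a.e.\ $E\in\mathcal{Z}$ and $\mu$-a.e.\ $\omega$. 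This is the statement \eqref{eq:reflectionless}. The step uses Herglotz boundary values, with $\mathcal{B}$ discarded since it is discrete.

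\textbf{Main obstacle.} The delicate point is Step 2, namely establishing the averaged inequality for non-real $z$ with a constant uniform in $\omega$. The cocycle is only defined off $\mathcal{B}$, and $\mathcal{M}(\omega,z)$ has poles at resonances. We would first check that for $\mathrm{Im}\,z>0$ there are no Dirichlet-type resonances on a tile, because a compactly supported solution would give a nonreal eigenvalue of a self-adjoint operator. This makes $\mathcal{M}(\omega,z)$ analytic on $\C_{+}$, and the only issue is controlling the approach to real $E\notin\mathcal{B}$, where uniform boundedness of $\mathcal{M}$ on compact subsets handles it. If the direct inequality proves awkward, the fallback is to quote Kotani's abstract theorem for $SL(2,\C)$-cocycles that are Herglotz-analytic in $z$ \cite{Kotani1987}, after verifying its hypotheses (monotonicity of the Möbius action in $z$) via Step 1.
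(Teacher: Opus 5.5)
Your overall strategy matches the paper's. The paper gives no argument beyond asserting that Kotani's steps carry over, with a pointer to \cite[sec.~2]{Kotani1987}, so your fallback is exactly its route. The argument you write out in Step~2, however, does not close, and the gap is at the core of Kotani's theorem.

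First, the displayed bound has the wrong power of $\mathrm{Im}\,z$. As written it only gives $\mathbb{E}_\mu\bigl[\int_{T_{\omega(0)}}|f^+_\omega|^2/\mathrm{Im}\,m^+_\omega\bigr]=O\bigl(L_\Omega(z)/(\mathrm{Im}\,z)^2\bigr)$, which is useless on $\mathcal{Z}$. The correct version is sharp and needs no comparability estimate for the tile mass. For $f=f^+_\omega$ one has $|f(o)|^2\,\mathrm{Im}\,m^+_\omega=|f(o_1)|^2\,\mathrm{Im}\,m^+_{\sft\omega}+\mathrm{Im}\,z\int_{T_{\omega(0)}}|f|^2$, and averaging logarithms gives
\[
2L_{\Omega}(z)=-\mathbb{E}_{\mu}\log\Bigl(1-\frac{\int_{T_{\omega(0)}}|f^{+}_{\omega}|^{2}}{\int_{\Gamma^{+}_{\omega}}|f^{+}_{\omega}|^{2}}\Bigr)\ \ge\ \mathbb{E}_{\mu}\Bigl[\frac{\mathrm{Im}\,z\int_{T_{\omega(0)}}|f^{+}_{\omega}|^{2}}{|f^{+}_{\omega}(o)|^{2}\,\mathrm{Im}\,m^{+}_{\omega}(z)}\Bigr].
\]

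More seriously, a bound of this type treats $m^+$ and $m^-$ separately. Even granting that $L_\Omega(E+i\epsilon)/\epsilon$ stays bounded, it only yields $\mathrm{Im}\,m^\pm_\omega(E+i0)>0$, i.e. absolutely continuous spectrum for the half-line operators. That bound on $L_\Omega(E+i\epsilon)/\epsilon$ does not follow from positivity and harmonicity of $L_\Omega$ alone. Nothing in your argument forces $m^-_\omega=-\overline{m^+_\omega}$.

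Kotani couples $m^+$ and $m^-$ through a second, independent identity: $\partial_\epsilon L_\Omega(E+i\epsilon)$ equals an averaged imaginary part of a Green's function. This is a Thouless-type formula, saying that $L_\Omega$ is the real part of a function with Herglotz derivative. It also shows that $L_\Omega(E+i\epsilon)/\epsilon$ and $\partial_\epsilon L_\Omega(E+i\epsilon)$ have the same finite limit for a.e. $E\in\mathcal{Z}$. Inserting both into
\[
\frac{1}{\mathrm{Im}\,m^{+}}+\frac{1}{\mathrm{Im}\,m^{-}}=\mathrm{Im}\Bigl(\frac{-1}{m^{+}+m^{-}}\Bigr)\Bigl(4+\frac{|m^{+}+\overline{m^{-}}|^{2}}{\mathrm{Im}\,m^{+}\,\mathrm{Im}\,m^{-}}\Bigr)
\]
squeezes the averaged defect term to $0$. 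This works only because the constants match exactly, so your unspecified ``(const)'' breaks the argument.

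This step is also not verbatim on graphs. The derivative of $L_\Omega$ is tile-integrated, essentially $\mathbb{E}_\mu\int_{T_{\omega(0)}}\mathrm{Im}\,G_\omega(x,x;z)\,dx$ up to corrections from the singularities of the cocycle. The identity above lives at the vertex $o$, where $G_\omega(o,o;z)=-1/(m^+_\omega+m^-_\omega)$. Bridging the two is precisely the adaptation that must be supplied, or delegated to an abstract Kotani theorem for monotone cocycles, as the paper in effect does.

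Separately, your resolution of the ``main obstacle'' is wrong. A nonzero solution on $T_{\omega(0)}$ with $\Phi_\omega=\vec 0$ need not vanish together with its flux at $v^{\omega(0)}_2$, so it produces no compactly supported eigenfunction. It only forces $f^-_{\sft\omega}$ to vanish on $\Gamma^-_\omega$, which is no contradiction. The transfer matrix has poles wherever the off-diagonal entry $\lambda_{12}(z)$ of the tile's $2\times 2$ Dirichlet-to-Neumann matrix vanishes, and for tiles with cycles this can happen off the real axis.

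For example, take three parallel edges of lengths $1,2,3$ joining $v_1$ and $v_2$. Then $\lambda_{12}=k\sum_j 1/\sin(k\ell_j)$ with $k=\sqrt z$, and $\lambda_{12}=0$ reduces to $8\cos^3k+4\cos^2k-1=0$. This cubic has two non-real roots, and a non-real $\cos k$ forces $k^2\notin\mathbb{R}$. This is repairable, since only a thin strip above compact subsets of $\mathbb{R}\setminus\mathcal{B}$ enters Kotani's limit. Still, $\mathcal{M}(\omega,z)$ is not analytic on $\mathbb{C}_{+}$, and $L_\Omega$ is only superharmonic there, which any Thouless-type formula must account for.
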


In other words, Kotani's theorem states that the set
\begin{equation}
\D\left(\mathcal{Z}\right):=\set{\omega\in\Omega}{\text{\ensuremath{m^{\pm}_{\omega}\left(z\right)} are reflectionless on }\mathcal{Z}},\label{eq:Dz}
\end{equation}
is a set of full measure. We note that while Kotani's original result
was established for ergodic potentials on $\R$, it can be directly
extended to tiling graphs by following essentially the same steps,
adapted to metric graphs (see \cite[sec. 2]{Kotani1987}).

For the sake of the next definition we introduce the notations $\N^{+}:=\left\{ 0,1,2,\ldots\right\} $
and $\N^{-}:=\left\{ -1,-2,-3,\ldots\right\} $.
\begin{defn}
Denote by $\mathcal{R}_{\pm}$ the restriction maps of a sequence
$\omega$ from $\mathbb{Z}$ to $\mathbb{N^{\pm}}$:
\begin{align}
 & \mathcal{R}_{\pm}:\A^{\Z}\rightarrow\A^{\N^{\pm}},\label{eq:R1}\\
 & \mathcal{R}_{\pm}\left(\left(\omega\left(n\right)\right)_{n\in\Z}\right)=\left(\omega\left(n\right)\right)_{n\in\N^{\pm}}.\label{eq:R2}
\end{align}
\end{defn}

\begin{thm}[Oracle theorem]
\label{thm:oracle} Suppose that $\mathcal{Z}$ has positive Lebesgue
measure. Then for the set $\D\left(\mathcal{Z}\right)$ in (\ref{eq:Dz}),
the mapping
\begin{align}
\mathcal{R} & :\mathcal{R}_{-}\left(\D\left(\mathcal{Z}\right)\right)\rightarrow\mathcal{R}_{+}\left(\D\left(\mathcal{Z}\right)\right),\label{eq:R3}\\
 & \mathcal{R}_{-}\left(\omega\right)\mapsto\mathcal{R}_{+}\left(\omega\right),\quad\omega\in\D\left(\mathcal{Z}\right),\label{eq:R4}
\end{align}
 is well-defined, bijective, and uniformly continuous. The same holds
for $\mathcal{R}^{-1}$.
\end{thm}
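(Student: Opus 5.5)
The plan follows the standard Kotani--Remling oracle argument, with Corollary~\ref{cor:loc-uni} replacing the usual one-dimensional Borg--Marchenko continuity. The key input is that reflectionless Herglotz functions on a set of positive measure determine each other.

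\textbf{Well-definedness and injectivity.} Let $\omega,\omega'\in\D(\mathcal{Z})$ with $\mathcal{R}_{-}(\omega)=\mathcal{R}_{-}(\omega')$. The half-graphs $\Gamma^{-}_{\omega}$ and $\Gamma^{-}_{\omega'}$ are then identical, so $m^{-}_{\omega}=m^{-}_{\omega'}$. By Theorem~\ref{thm:-Kotani}, for Lebesgue a.e.\ $E\in\mathcal{Z}$,
\[
m^{+}_{\omega}(E+i0)=-\overline{m^{-}_{\omega}(E+i0)}=-\overline{m^{-}_{\omega'}(E+i0)}=m^{+}_{\omega'}(E+i0).
\]
Two Herglotz functions whose boundary values agree on a set of positive Lebesgue measure coincide, by the Lusin--Privalov uniqueness theorem applied to their difference. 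Hence $m^{+}_{\omega}=m^{+}_{\omega'}$, and Theorem~\ref{thm:BM} (applied to the $+$ half) gives $\mathcal{R}_{+}(\omega)=\mathcal{R}_{+}(\omega')$. So $\mathcal{R}$ is well defined. The same argument with the roles of $\pm$ exchanged shows that $\mathcal{R}^{-1}$ is well defined. Surjectivity is immediate from the definition of the domain and range, so $\mathcal{R}$ is a bijection.

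\textbf{Continuity.} The domain $\mathcal{R}_{-}(\D(\mathcal{Z}))$ sits inside the compact space $\A^{\N^{-}}$. I will prove that $\mathcal{R}$ extends continuously to the closure of its domain, which gives uniform continuity. Take $\omega_{n},\omega\in\overline{\D(\mathcal{Z})}$ with $\mathcal{R}_{-}(\omega_{n})\to\mathcal{R}_{-}(\omega)$.

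\begin{enumerate}
\item By Corollary~\ref{cor:loc-uni} for the $-$ side, $m^{-}_{\omega_{n}}\to m^{-}_{\omega}$ locally uniformly on $\C_{+}$.
\item By compactness, pass to a subsequence with $\mathcal{R}_{+}(\omega_{n})\to\eta$. Then Corollary~\ref{cor:loc-uni} gives $m^{+}_{\omega_{n}}\to m^{+}_{\eta}$ locally uniformly.
\item Show that $m^{+}_{\eta}$ and $m^{-}_{\omega}$ still satisfy the reflectionless relation on $\mathcal{Z}$.
\item Conclude by the uniqueness argument above that $m^{+}_{\eta}$ is determined by $m^{-}_{\omega}$. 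Every subsequential limit $\eta$ is then the same, so the full sequence $\mathcal{R}_{+}(\omega_{n})$ converges.
\end{enumerate}

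\textbf{Main obstacle: step 3.} Reflectionlessness is a statement about boundary values, which are not preserved by locally uniform convergence in general. I would use Remling's integral characterization: $m^{\pm}$ are reflectionless on a set $A$ iff for every Borel $S\subset A$,
\[
\int_{S}\xi_{+}(t)\,dt=\int_{S}\left(1-\xi_{-}(t)\right)dt,
\]
where $\xi_{\pm}=\frac{1}{\pi}\arg m^{\pm}$ are the Krein functions (with the sign convention adapted to $-m^{-}$). Equivalently, one tests with harmonic measures $\omega_{z}(S)$. Locally uniform convergence of Herglotz functions gives weak-$*$ convergence of their Krein functions in $L^{\infty}$, and this relation passes to the limit.

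The paper's $m$-functions are defined only on $\C_{+}$ and may have asymptotics differing from the Schrödinger case (e.g.\ $m\sim-\deg(o)\sqrt{-z}$). This should not matter, since the Krein-function argument only uses the Herglotz property. The point needing the most care is that the closure of $\D(\mathcal{Z})$ is reflectionless on $\mathcal{Z}$ up to null sets, exactly as in Remling's treatment. Once this closedness holds, the continuity of $\mathcal{R}^{-1}$ follows symmetrically.
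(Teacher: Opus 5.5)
Your architecture matches the paper's. Well-definedness and injectivity come from Theorem~\ref{thm:BM} plus uniqueness of Herglotz functions from their boundary values on $\mathcal{Z}$. Continuity comes from Corollary~\ref{cor:loc-uni} plus the fact that reflectionlessness survives locally uniform limits, i.e.\ that $\D(\mathcal{Z})$ is closed. The paper simply cites \cite[lem.~5]{kotani1985support} for this closedness and then uses that a continuous bijection between compact sets is a homeomorphism; your subsequence argument is an equivalent packaging.

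\textbf{The gap is in the tool you propose for Step~3.} The condition $\int_S\xi_+=\int_S(1-\xi_-)$ for all Borel $S\subset A$ only says that $\xi_+=1-\xi_-$ a.e.\ on $A$. That is, $\arg m^+(t)=\arg\bigl(-\overline{m^-(t)}\bigr)$, which carries no information about moduli and is strictly weaker than reflectionlessness. For example, $m^+\equiv i$ and $m^-\equiv 2i$ have $\xi_\pm\equiv\frac12$, yet $-\overline{m^-}=2i\neq m^+$. So the relation you pass to the limit does not determine $m^+_\eta$ from $m^-_\omega$: every $c\,m^+_\eta$ with $c>0$ satisfies it equally well. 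Hence Step~4 fails as written.

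\textbf{Repair.} Use Remling's full harmonic-measure criterion on the range side. Write $h_w(S)$ for the harmonic measure of $S\subset\R$ seen from $w\in\overline{\C_+}$, and $\xi_F=\frac1\pi\arg F(t+i0)$ for the Krein function of a Herglotz $F$.
\begin{enumerate}
\item $m^\pm$ are reflectionless on $A$ iff $h_{m^+(t)}(S)=h_{m^-(t)}(-S)$ for a.e.\ $t\in A$ and every Borel $S\subset\R$.
\item It suffices to take $S=(-\infty,c)$ with $c\in\Q$. This is exactly the family of Krein-function identities $\xi_{m^+-c}=1-\xi_{m^-+c}$ a.e.\ on $A$.
\item These identities say $\arg\bigl(m^+(t)-c\bigr)=\arg\bigl(-\overline{m^-(t)}-c\bigr)$ for all rational $c$. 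Since a point of $\overline{\C_+}$ is determined by these angles, they force $m^+(t)=-\overline{m^-(t)}$.
\item Each identity is linear in the Krein functions, so it passes to the limit by exactly the weak-$*$ argument you describe. Apply it to $m^+_{\omega_n}-c$ and $m^-_{\omega_n}+c$, whose limits are genuine Herglotz functions rather than real constants.
\end{enumerate}
Alternatively, cite Kotani's lemma as the paper does. With this correction your proof is complete and essentially coincides with the paper's.
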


In a nutshell, the oracle theorem states that if $\left|\mathcal{Z}\right|>0$,
then the set $\D\left(\mathcal{Z}\right)$ is in some sense ``deterministic'',
and one can predict one half of the sequence $\omega\in\D\left(\mathcal{Z}\right)$
by knowing the other (hence the name `oracle'). This is analogous
to Remling's Oracle Theorem for one-dimensional Schrödinger and Jacobi
operators \cite{MR2811596}. Our result is also closely related to
\cite[thm. 3.7]{DamFanSuk_ahp20}.
\begin{proof}
By the Borg-Marchenko Theorem \ref{thm:BM}, we know that the restrictions
$\mathcal{R}_{\pm}\left(\omega\right)$ determine $m^{\pm}_{\omega}$
uniquely, and this correspondence is continuous by Corollary \ref{cor:loc-uni}.
Moreover, since we assume that $m^{-}_{\omega}\left(E+i0\right)=-\overline{m^{+}_{\omega}\left(E+i0\right)}$
on the positive measure set $\mathcal{Z}$, and Herglotz functions
are uniquely determined by their boundary values on a set of positive
measure, we obtain a 1-1 correspondence between the functions $m^{\pm}_{\omega}\left(z\right)$
for $\omega\in\D\left(\mathcal{Z}\right)$. We thus see that $\mathcal{R}$
is well-defined, bijective, and continuous by the chain of continuous
bijections
\begin{equation}
\mathcal{R}_{+}\left(\omega\right)\iff m^{+}_{\omega}\left(z\right)\iff m^{-}_{\omega}\left(z\right)\iff\mathcal{R}_{-}\left(\omega\right).\label{eq:M-R-Correspondence}
\end{equation}
Due to \cite[lem. 5]{kotani1985support} and Corollary \ref{cor:loc-uni},
we also know that $\D\left(\mathcal{Z}\right)$ is closed, and thus
compact. Since $\mathcal{R}_{\pm}$ are continuous, $\mathcal{R}_{\pm}\left(\D\left(\mathcal{Z}\right)\right)$
are compact as well. Thus the continuous bijection $\mathcal{R}:\mathcal{R}_{-}\left(\D\left(\mathcal{Z}\right)\right)\to\mathcal{R}_{+}\left(\D\left(\mathcal{Z}\right)\right)$
is a homeomorphism. Consequently, both $\mathcal{R}$ and $\mathcal{R}^{-1}$
are uniformly continuous.
\end{proof}

The following lemma will allow us to apply the oracle theorem simultaneously
for entire orbits in $\Omega$:
\begin{lem}
\label{lem:shift-invariant}The set $\D\left(\mathcal{Z}\right)$
is shift-invariant.
\end{lem}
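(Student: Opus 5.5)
We prove that $\omega\in\D\left(\mathcal{Z}\right)$ implies $\sft\omega\in\D\left(\mathcal{Z}\right)$ and $\sft^{-1}\omega\in\D\left(\mathcal{Z}\right)$. The idea is to express the $m$-functions of $\Gamma_{\sft\omega}$ in terms of those of $\Gamma_{\omega}$ via the transfer matrix. Reflectionlessness is then a statement about the vanishing of the real part of the boundary value of the diagonal Green's function, and such vanishing is transported by an $SL(2,\R)$ change of frame. Note that $\Gamma_{\sft\omega}$ is $\Gamma_{\omega}$ with the origin moved from $v^{\omega(0)}_{1}$ to $v^{\omega(1)}_{1}=v^{\omega(0)}_{2}$, and $\mathcal{Z}$ itself is independent of $\omega$.

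\textbf{Step 1: Weyl solutions are transported by $\mw$.} Fix $z\in\C_{+}$ off the discrete set where the cocycle is undefined; the complexification of $\mathcal{B}$ is handled by analyticity. The $L^{2}$ solution $f^{+}_{\omega}$ on $\Gamma^{+}_{\omega}$ restricts to the $L^{2}$ solution on $\Gamma^{+}_{\sft\omega}$, which sits inside it. With the normalization of (\ref{eq:initial-ode}), the pair (value, summed right derivative) at the new origin equals $\mathcal{M}(\omega,z)$ applied to the pair at the old origin. Hence
\[
m^{+}_{\sft\omega}(z)=\mathcal{M}(\omega,z)\cdot m^{+}_{\omega}(z),
\]
where $\cdot$ denotes the Möbius action. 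The same relation holds for $-m^{-}$, since the left $L^{2}$ solution on $\Gamma^{-}_{\sft\omega}$ is obtained by extending the left Weyl solution of $\Gamma^{-}_{\omega}$ across the tile $T_{\omega(0)}$. This extension uses Kirchhoff at the new origin, where the summed derivatives from the left and right sides add to zero; this is exactly how the sign convention relates the two $m$'s.

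\textbf{Step 2: pass to boundary values.} For $E\in\R\backslash\B$ the matrix $\mathcal{M}(\omega,E)$ is real, lies in $SL(2,\R)$, and depends continuously (indeed analytically) on $z$ near $E$. Taking $z=E+i\epsilon$ with $\epsilon\to0^{+}$ at points where both boundary values exist, which is Lebesgue a.e., gives
\[
m^{+}_{\sft\omega}(E+i0)=\mathcal{M}(\omega,E)\cdot m^{+}_{\omega}(E+i0),\qquad -m^{-}_{\sft\omega}(E+i0)=\mathcal{M}(\omega,E)\cdot\left(-m^{-}_{\omega}(E+i0)\right).
\]
The reflectionless condition (\ref{eq:reflectionless}) says that $-m^{-}_{\omega}(E+i0)=\overline{m^{+}_{\omega}(E+i0)}$. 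A real Möbius map commutes with complex conjugation, so applying $\mathcal{M}(\omega,E)$ to both sides yields $-m^{-}_{\sft\omega}(E+i0)=\overline{m^{+}_{\sft\omega}(E+i0)}$. This holds for a.e. $E\in\mathcal{Z}$, because $\mathcal{B}$ is discrete and therefore null. The inverse shift is handled in the same way with $\mathcal{M}(\sft^{-1}\omega,E)^{-1}$.

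\textbf{Main obstacle.} The delicate point is Step 1, namely pinning down the sign and normalization conventions. The matrix $\mw$ in (\ref{eq:initial-ode}) uses only the right derivative sum, while $m^{-}$ is defined with outward derivatives on the left. One must check carefully that the Kirchhoff condition at the gluing vertex converts the left Weyl solution's data into the form $(f,-\sum\partial^{-})=(f,\sum\partial^{+})$, so that both $m^{+}$ and $-m^{-}$ transform by the same Möbius map. A further subtlety arises when $m^{\pm}_{\omega}(E+i0)$ equals $\infty$ or the denominator vanishes. These are handled by working projectively on $\C\cup\{\infty\}$, or by noting that such events occur only on a null set.
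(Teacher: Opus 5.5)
Your proposal is correct and follows the paper's proof essentially step for step. The paper also derives $m^{+}_{\sft\omega}=\mathcal{M}(\omega,z)\cdot m^{+}_{\omega}$ and $-m^{-}_{\sft\omega}=\mathcal{M}(\omega,z)\cdot(-m^{-}_{\omega})$ from the transfer matrix, then uses that $\mathcal{M}(\omega,E+i0)$ is real to carry the reflectionless identity over to $\sft\omega$; your explicit handling of the inverse shift via $\mathcal{M}(\sft^{-1}\omega,E)^{-1}$ and of the null set where the M\"obius denominator vanishes fills in details the paper leaves implicit.
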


\begin{proof}
Let $z\in\C_{+}$. For $\omega\in\Omega$ fixed, we denote the elements
of the transfer matrix $\mw$ by $\mathcal{M}_{ij}$. Then, combining
the definition (\ref{eq:m3}) for $m^{\pm}_{\omega}\left(z\right)$
with definition (\ref{eq:trans-mat}) for the transfer matrix (and
carefully keeping notice of the orientation of the derivative with
respect to the origin), direct computation shows that,
\begin{align}
 & m^{-}_{\sft\omega}\left(z\right)=\frac{\sum_{e\sim o\left(\Gamma^{-}_{\sft\omega}\right)}\left.\frac{d}{dx}f_{\sft\omega}\right|_{e}\left(o\left(\Gamma^{-}_{\sft\omega}\right);z\right)}{f_{\sft\omega}\left(o\left(\Gamma^{-}_{\sft\omega}\right);z\right)}=-\frac{\sum_{e\sim o\left(\Gamma^{-}_{\sft\omega}\right)}\left.\frac{d}{dx}f_{\sft\omega}\right|_{e}\left(o\left(\Gamma^{+}_{\sft\omega}\right);z\right)}{f_{\sft\omega}\left(o\left(\Gamma^{+}_{\sft\omega}\right);z\right)}\nonumber \\
= & -\frac{\mathcal{M}_{21}f_{\omega}\left(o\left(\Gamma^{+}_{\sft\omega}\right);z\right)+\mathcal{M}_{22}\sum_{e\sim o\left(\Gamma^{+}_{\omega}\right)}\left.\frac{d}{dx}f_{\omega}\right|_{e}\left(o\left(\Gamma^{+}_{\omega}\right);z\right)}{\mathcal{M}_{11}f_{\omega}\left(o\left(\Gamma^{+}_{\sft\omega}\right);z\right)+\mathcal{M}_{12}\sum_{e\sim o\left(\Gamma^{+}_{\omega}\right)}\left.\frac{d}{dx}f_{\omega}\right|_{e}\left(o\left(\Gamma^{+}_{\omega}\right);z\right)}\nonumber \\
= & -\frac{\mathcal{M}_{21}f_{\omega}\left(o\left(\Gamma^{-}_{\sft\omega}\right);z\right)-\mathcal{M}_{22}\sum_{e\sim o\left(\Gamma^{-}_{\omega}\right)}\left.\frac{d}{dx}f_{\omega}\right|_{e}\left(o\left(\Gamma^{-}_{\omega}\right);z\right)}{\mathcal{M}_{11}f_{\omega}\left(o\left(\Gamma^{-}_{\sft\omega}\right);z\right)-\mathcal{M}_{12}\sum_{e\sim o\left(\Gamma^{-}_{\omega}\right)}\left.\frac{d}{dx}f_{\omega}\right|_{e}\left(o\left(\Gamma^{-}_{\omega}\right);z\right)}\nonumber \\
= & -\frac{\mathcal{M}_{21}-\mathcal{M}_{22}m^{-}_{\omega}\left(z\right)}{\mathcal{M}_{11}-\mathcal{M}_{12}m^{-}_{\omega}\left(z\right)},\label{eq:mTw-1}
\end{align}
where we note that $\left.f_{\omega}\right|_{e}\left(o\left(\Gamma^{-}_{\omega}\right);z\right)\neq0$,
otherwise $m^{-}_{\omega}$ would have a pole at $z$ (which cannot
happen for $z\in\C_{+}$). Similarly (and noting that the value of
$f_{\omega}\left(v;z\right)$ does not depend on the orientation with
respect to $v$), one can show that
\begin{align}
m^{+}_{\sft\omega}\left(z\right)= & \frac{\mathcal{M}_{21}+\mathcal{M}_{22}m^{+}_{\omega}\left(z\right)}{\mathcal{M}_{11}+\mathcal{M}_{12}m^{+}_{\omega}\left(z\right)}.\label{eq:mTw}
\end{align}

Now, let $\omega\in\D\left(\mathcal{Z}\right)$, i.e., there exists
$\tilde{\mathcal{Z}}\subset\mathcal{Z}$ of full measure such that
$m^{-}_{\omega}\left(E+i0\right)=-\overline{m^{+}_{\omega}\left(E+i0\right)}$
for all $E\in\tilde{\mathcal{Z}}$. Taking $z=E+i\epsilon$ with $\epsilon\rightarrow0$
in (\ref{eq:mTw}) above, one obtains that for all $E\in\tilde{\mathcal{Z}}$,
\begin{align}
 & -\overline{m^{+}_{\sft\omega}\left(E+i0\right)}\nonumber \\
= & -\frac{\mathcal{M}_{21}\left(\omega,E+i0\right)+\mathcal{M}_{22}\left(\omega,E+i0\right)\overline{m^{+}_{\omega}\left(E+i0\right)}}{\mathcal{M}_{11}\left(\omega,E+i0\right)+\mathcal{M}_{12}\left(\omega,E+i0\right)\overline{m^{+}_{\omega}\left(E+i0\right)}}\nonumber \\
= & -\frac{\mathcal{M}_{21}\left(\omega,E+i0\right)-\mathcal{M}_{22}\left(\omega,E+i0\right)m^{-}_{\omega}\left(E+i0\right)}{\mathcal{M}_{11}\left(\omega,E+i0\right)-\mathcal{M}_{12}\left(\omega,E+i0\right)m^{-}_{\omega}\left(E+i0\right)}\nonumber \\
= & m^{-}_{\sft\omega}\left(E+i0\right),\label{eq:mp-mm-equality}
\end{align}
where we have used the fact that $\mathcal{M}\left(\omega,E+i0\right)$
is a real-valued matrix, as the ODE (\ref{eq:ODE-transfer}) has real-valued
solutions at the limit $\epsilon\rightarrow0$. This completes the
proof.
\end{proof}

\begin{thm}
\label{thm:Weaker-generalize-Cantor}$\sigma\left(H_{\Omega}\right)$
has Lebesgue measure zero. Moreover, all isolated points of $\sigma\left(H_{\Omega}\right)$
(if any exist) are eigenvalues from the discrete set $\B\cap\sigma\left(H_{\Omega}\right)$.
In particular, up to a possible discrete set of eigenvalues, $\sigma\left(H_{\Omega}\right)$
is a generalized Cantor set.
\end{thm}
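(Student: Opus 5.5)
We argue by contradiction, in the standard Kotani–Remling way. Since $\B$ is discrete, $\sigma\left(H_{\Omega}\right)$ has measure zero if and only if $\sigma\left(H_{\Omega}\right)\backslash\B$ does, and by Theorem~\ref{thm:Kotani-eq} this set equals $\mathcal{Z}$. So assume $\left|\mathcal{Z}\right|>0$.

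By Kotani's Theorem~\ref{thm:-Kotani}, $\D\left(\mathcal{Z}\right)$ has full $\mu$-measure, hence is nonempty. By Lemma~\ref{lem:shift-invariant} it is shift-invariant, and by the proof of the oracle Theorem~\ref{thm:oracle} it is closed. Minimality of $\Omega$ then forces $\D\left(\mathcal{Z}\right)=\Omega$. The oracle theorem now gives a uniformly continuous map $\mathcal{R}:\mathcal{R}_{-}(\Omega)\to\mathcal{R}_{+}(\Omega)$ that determines the right half of every $\omega\in\Omega$ from its left half.

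Uniform continuity yields an $N$ such that $\omega|_{[-N,-1]}$ determines $\omega(0)$ for every $\omega\in\Omega$. Indeed, otherwise there are pairs $\omega_k,\omega'_k$ agreeing on $[-k,-1]$ but differing at $0$, which contradicts uniform continuity of $\mathcal{R}$ with respect to the metric (\ref{eq:disc-metric}). Applying this to the shifts $\sft^{j}\omega\in\Omega$ (shift invariance again) shows that each letter is a fixed function of the preceding $N$ letters. Since there are only finitely many words of length $N$, the sequence $\omega(j),\omega(j+1),\ldots$ must eventually become periodic, and by the same argument run backwards with $\mathcal{R}^{-1}$, so must the left tail. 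Minimality then implies that $\Omega$ contains a periodic point: an eventually periodic $\omega$ has a periodic limit point of its shift orbit, and that limit lies in $\Omega$. This contradicts aperiodicity, so $\left|\mathcal{Z}\right|=0$. The main point requiring care is converting uniform continuity of $\mathcal{R}$ into the finite-window determination statement; the rest is routine symbolic dynamics.

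For the Cantor statement, we first note that $\sigma\left(H_{\Omega}\right)$ is closed and has measure zero, so it is nowhere dense. It remains to show that any isolated point $E$ of $\sigma\left(H_{\Omega}\right)$ lies in $\B$. Suppose $E\notin\B$ is isolated. Then $E$ is an eigenvalue of $H_{\omega}$ for every $\omega$, since isolated spectral points are eigenvalues. Moreover $E\in\mathcal{Z}$ by Theorem~\ref{thm:Kotani-eq}, so $L_{\Omega}(E)=0$.

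An eigenfunction $\phi$ is $L^{2}$, so its transfer vectors $\Phi_{n}=\mnw\Phi_{0}$ tend to zero in both directions. On the other hand, by Boshernitzan's condition the cocycle is uniform (\cite{MR2219271}), and at the zero-exponent energy $E\notin\B$ the standard argument of Damanik–Lenz excludes solutions decaying in both directions. Concretely, following \cite{DAMANIK_LENZ_2025}, we use the minimality and repetition properties implied by Boshernitzan's condition (a Gordon-type or uniform-growth argument) to show that $\limsup_{|n|\to\infty}\|\Phi_{n}\|>0$ for every solution. This rules out $L^{2}$ eigenfunctions with $E\notin\B$.

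Hence all isolated points lie in $\B\cap\sigma\left(H_{\Omega}\right)$, a discrete set. Removing it leaves a closed, nowhere dense set without isolated points, that is, a generalized Cantor set.
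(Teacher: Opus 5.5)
Your proof that $|\sigma(H_\Omega)|=0$ is correct and is essentially the paper's argument. Your extra step $\D(\mathcal{Z})=\Omega$, obtained from closedness, nonemptiness, shift-invariance and minimality, is legitimate and just streamlines the shifting.

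The gap is in the isolated-point part. You assert that Boshernitzan's condition and uniformity of the cocycle give $\limsup_{|n|\to\infty}\|\Phi_n\|>0$ for every solution at every $E\in\mathcal{Z}\setminus\B$. That amounts to saying that no $H_\omega$ has eigenvalues outside $\B$, and it does not follow:
\begin{itemize}
\item Uniform convergence of $\frac{1}{n}\log\|\mathcal{M}_{n}(\omega,E)\|$ to $0$ only says the transfer matrices grow subexponentially. That is compatible with a solution decaying, e.g., polynomially in both directions.
\item A Gordon-type argument needs specific local repetitions (squares or cubes adjacent to the origin) in the given $\omega$. Boshernitzan's condition does not guarantee these for every $\omega\in\Omega$.
\end{itemize}
As far as I know, absence of eigenvalues for all $\omega$ is not available in this generality even for discrete Schr\"odinger operators on $\Z$. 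In the paper, \cite{DAMANIK_LENZ_2025} is used only to get $\NUH=\varnothing$. Absence of eigenvalues is claimed only for Sturmian combs, via Gordon's lemma. Your step would give purely singular continuous spectrum off $\B$ for every $\omega$, which is more than the theorem asserts and more than the available tools provide.

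You only need to exclude isolated points, which is much softer, and this is what the paper does. Suppose $E\notin\B$ is isolated in $\sigma(H_\Omega)$.
\begin{itemize}
\item Then $\chi_{\{E\}}(H_\omega)=g(H_\omega)$ for a continuous $g$. Covariance (\ref{eq:covariant}) combined with minimality (or ergodicity) forces this nonzero projection to have infinite rank. This is the emptiness of the discrete spectrum, which the paper takes from \cite[thm. 4.2.4]{damFil_book22}.
\item On the other hand, $E\notin\B$ means every solution of (\ref{eq:ODE-transfer}) is determined by $\Phi_\omega\in\C^{2}$, so $\dim\ker(H_\omega-E)\le 2$.
\end{itemize}
These two facts are incompatible, so every isolated point lies in $\B\cap\sigma(H_\Omega)$. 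You should replace your Gordon/uniform-growth paragraph with this argument.
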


\begin{rem}
\label{rem:thm1.7.4.5}This is exactly Theorem \ref{thm:TMP}, where
the set $F$ is the set of isolated points in $\B\cap\sigma\left(H_{\Omega}\right)$.
\end{rem}

\begin{proof}
We know by Theorem \ref{thm:Kotani-eq} that $\sigma\left(H_{\omega}\right)\backslash\B=\sigma\left(H_{\Omega}\right)\backslash\B=\mathcal{Z}$
for all $\omega\in\Omega$. Since $\B$ is discrete, we can simply
show that $\left|\mathcal{Z}\right|=0$ to deduce that $\left|\sigma\left(H_{\Omega}\right)\right|=0$.
Assume by contradiction that this is not the case. Since $\left|\mathcal{Z}\right|>0$,
the oracle theorem gives that the mapping
\begin{equation}
\mathcal{R}:\left(\omega\left(n\right)\right)_{n\in\N^{-}}\mapsto\left(\omega\left(n\right)\right)_{n\in\N^{+}},\quad\omega\in\D\left(\mathcal{Z}\right),\label{eq:shift-invariant}
\end{equation}
is well-defined and uniformly continuous. Moreover, Kotani's theorem
\ref{thm:-Kotani} implies that the set $\D\left(\mathcal{Z}\right)\subset\Omega$
is of full measure.

By uniform continuity of $\mathcal{R}$ there exists $k=k\left(\mathcal{A}\right)\in\mathbb{N}$
(independent of $\omega$) such that for all $\omega\in\D\left(\mathcal{Z}\right)$,
the collection $\left(\omega\left(-k\right),...,\omega\left(-1\right)\right)$
uniquely determines $\omega\left(0\right)$. Using the shift invariance
of $\mathcal{D}(\mathcal{Z})$ (Lemma \ref{lem:shift-invariant}),
we repeat this argument for all positive shifts $\left\{ \sft^{n}\omega\right\} _{n\in\N}$,
deducing that the collection $\left(\omega\left(-k\right),...,\omega\left(-1\right)\right)$
uniquely determines $\left(\omega\left(n\right)\right)_{n\in\N^{+}}$.
Similarly, using the uniform continuity of $\mathcal{R}^{-1}$ together
with the shift invariance of $\mathcal{D}(\mathcal{Z})$ and applying
negative shifts, we get that $\left(\omega\left(n\right)\right)_{n\in\N^{+}}$
uniquely determines $\left(\omega\left(n\right)\right)_{n\in\N^{-}}$.
Summing up, we deduce that the collection $\left(\omega\left(-k\right),...,\omega\left(-1\right)\right)$
determines the entire sequence $\omega$. Thus,
\begin{equation}
\#\D\left(\mathcal{Z}\right)\leq\left|\mathcal{A}\right|^{k}<\infty.\label{eq:shift-finite}
\end{equation}
 We conclude that every sequence $\omega\in\D\left(\mathcal{Z}\right)$
has only finitely many possible shifts, and in particular it is periodic
-- contradicting the aperiodicity of $\Omega$. This gives that $\left|\mathcal{Z}\right|=\left|\sigma\left(H_{\Omega}\right)\backslash\B\right|=\left|\sigma\left(H_{\Omega}\right)\right|=0$.

In particular, $\sigma\left(H_{\Omega}\right)$ contains no intervals.
Since it is also closed, then it is nowhere dense in $\R$. Moreover,
standard theory \cite[thm. 4.2.4]{damFil_book22} implies the discrete
spectrum $\sigma_{\text{disc}}\left(H_{\Omega}\right)\backslash\B$
is empty. Hence, $\sigma\left(H_{\Omega}\right)$ is closed, nowhere
dense, and contains no isolated points, up to possible isolated points
in $\B\cap\sigma\left(H_{\Omega}\right)$ (which are automatically
eigenvalues as isolated spectral points), completing the proof.
\end{proof}

\begin{rem}
We emphasize that energies in $\B\cap\sigma\left(H_{\Omega}\right)$
may correspond either to isolated eigenvalues or to embedded eigenvalues.
In the latter case, these energies are accumulation points of the
spectrum and therefore do not affect its Cantor nature. The question
of whether attaching graph decorations or introducing internal vertex
structures necessarily leads to the opening of spectral gaps (and
hence to isolated eigenvalues) has been studied in various settings,
see for instance \cite{kuchment2005quantum,AvrExnLas_prl94,do2017resonant}.
These works identify conditions under which such local structures
produce genuine spectral gaps (and not embedded eigenvalues).
\end{rem}

\subsection{Generic Cantor spectrum for decorated $\protect\Z$-graphs -- proof
of Theorem~\ref{thm:Cantor}\label{subsec:Generic-Cantor-spectrum}}

By Theorem \ref{thm:TMP}, $\sigma\left(H_{\Omega}\right)\backslash\B$
is a generalized Cantor set. We have also seen that $\B\cap\sigma\left(H_{\Omega}\right)$
is discrete and now wish to examine the cases for which $\sigma\left(H_{\Omega}\right)\cap\B=\varnothing$,
so that the actual spectrum $\sigma\left(H_{\Omega}\right)$ is a
generalized Cantor set. In this subsection we prove that for the class
of decorated $\Z$-graphs (Definition \ref{def: decorated lines}),
$\sigma\left(H_{\Omega}\right)\cap\B=\varnothing$ holds Baire-generically
for the edge length set, which forms part of Theorem \ref{thm:Cantor}.
We conjecture that a similar genericity result should hold for a broader
class within tiling graphs, leading to generic Cantor spectrum, but
the proof below uses the decorated $\Z$-graph geometry in an essential
way.

For the remainder of this subsection, we fix a decorated $\Z$-graph
$\Gamma_{\omega}$ with no loops. As previously noted in Subsection~\ref{subsec:Transfer-matrices},
the discrete set $\B\cap\sigma\left(H_{\omega}\right)$ consists of
points $E\in\sigma\left(H_{\omega}\right)$ admitting a nontrivial
solution to (\ref{eq:ODE-transfer}) with initial condition $\Phi_{\omega}=\vec{0}$.
Consider the generalized solution to the eigenvalue equation $H_{\omega}f=Ef$,
with $E\in\B\cap\sigma\left(H_{\omega}\right)$. Since for decorated
$\Z$-graphs the vector $\Phi_{\omega}=\vec{0}$ is evaluated at the
center of an edge, the ODE uniqueness principle on intervals implies
that $f$ vanishes at the horizontal infinite line graph. This also
means that $f$ can be chosen to be a compactly supported eigenfunction.
\begin{lem}
\label{lem:compact}Assume that $\left(E,f\right)$ is an eigenpair
of a decorated $\Z$-graph $\Gamma_{\omega}$ with $f$ compactly
supported. Then there exists a pair of decorations $\D_{1},\D_{2}$
(possibly $\D_{1}=\D_{2}$) such that for $i\in\{1,2\}$, $\left(E,f|_{\D_{i}}\right)$
is an eigenpair of the Laplacian $H|_{\D_{i}}$, with a Dirichlet
condition imposed at the base vertex of the decoration $\D_{i}$ and
the Kirchhoff condition everywhere else. Furthermore, $f$ vanishes
on all edges of $\Gamma_{\omega}$ which are either to the left of
$\D_{1}$ or to the right of $\D_{2}$.
\end{lem}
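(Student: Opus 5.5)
Let $f$ be a compactly supported eigenfunction with eigenvalue $E$ on the decorated $\Z$-graph $\Gamma_{\omega}$. Since $f\not\equiv0$ and $f$ has compact support, the set of integers $n$ such that $f$ does not vanish identically on the closed unit neighbourhood of $n$ (the base vertex $n$ together with the decoration attached at $n$) is finite and nonempty. We will show that $f$ vanishes on the line between consecutive active sites. We take $n_{1}$ and $n_{2}$ to be the leftmost and rightmost integers for which $f$ is nonzero on the decoration $\D_{\omega(n_{i})}$ or on a line edge adjacent to $n_{i}$. We then set $\D_{i}:=\D_{\omega(n_{i})}$.

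\textbf{Outer part of the line.} Consider the line edge $[n_{1}-1,n_{1}]$ and everything to its left. To the left of $n_{1}$ the function $f$ vanishes identically on the line, by the choice of $n_{1}$ together with ODE uniqueness. Indeed, $f$ has compact support, so it vanishes far to the left. Propagating from the left, suppose $f$ and the Kirchhoff sum vanish at a base vertex $m$, and $f$ vanishes on the edge arriving from the left. Then the decoration at $m$ sees a Dirichlet condition at its base. Hence either $f$ vanishes on that decoration, or $E$ is a Dirichlet eigenvalue there. The leftmost $m$ at which $f$ is nonzero somewhere is $n_{1}$. At $n_{1}$, $f$ vanishes on the left line edge, so $f(n_{1})=0$, and the outgoing derivative on the left edge is $0$. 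If $f$ also vanished identically on $\D_{1}$, then Kirchhoff at $n_{1}$ would force zero Cauchy data on the right line edge. By uniqueness on the interval, $f$ would vanish there too, contradicting minimality. Hence $f|_{\D_{1}}\not\equiv0$. Moreover $f|_{\D_{1}}$ satisfies $-f''=Ef$ with Kirchhoff conditions at the internal vertices of $\D_{1}$ and $f(v)=0$ at the base vertex $v$. So $(E,f|_{\D_{1}})$ is a Dirichlet-at-base eigenpair. The symmetric argument from the right handles $n_{2}$ and $\D_{2}$. The vanishing of $f$ on all edges to the left of $\D_{1}$ and to the right of $\D_{2}$ is exactly what the propagation gives, since the decorations further out carry zero data. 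Here one must also note that $f$ on an outer decoration with zero base value is either zero or a Dirichlet eigenfunction. The latter is excluded by the choice of $n_{1}$ and $n_{2}$ as extremal sites.

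\textbf{Main obstacle.} The delicate point is making the extremal choice consistent. We must rule out that $f$ is nonzero on some decoration strictly to the left of $n_{1}$ while vanishing on the line there. We avoid this by defining $n_{1}$ via nonvanishing anywhere on the decoration or the adjacent line edges, rather than on the line alone. With this definition, the argument above shows $f(n_{1})=0$ and $f$ is not identically zero on $\D_{1}$. The case $n_{1}=n_{2}$ gives $\D_{1}=\D_{2}$, which is allowed. One should also check that "no loop edges" is not needed for this lemma. It will matter only later, for the genericity argument.
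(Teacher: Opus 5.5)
Your proof is correct and is essentially the paper's argument. Both choose an extremal (leftmost/rightmost) site, use ODE uniqueness along the line through a base vertex whose decoration is silent, and use continuity to get the Dirichlet condition at the base. The only twist is that you build the vanishing to the left into the definition of $n_1$ and then derive $f|_{\D_1}\not\equiv0$ from Kirchhoff plus uniqueness. The paper instead picks the leftmost decoration with $f\neq0$ and deduces the vanishing on the line from compact support.

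Delete your opening claim that $f$ vanishes on the line between consecutive active sites. It is false in general, though you never use it. For example, on a comb with teeth of length $1/2$ at $E=\pi^{2}$, $f=\sin(\pi(x-n_1))$ on the line between two teeth extends to a compactly supported eigenfunction.
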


\begin{proof}
Let $\left(E,f\right)$ be as above. Since the line graph itself does
not admit compactly supported eigenfunctions, $f$ must be nonzero
at some decoration $\D$. Choose $\D_{1}$ to be the left-most decoration
of the comb such that this happens. Since $f\equiv0$ to the left
of $\D_{1}$, continuity then implies that $f\left(v_{a}\right)=0$
at the base vertex $v_{a}$ of the decoration $\D_{1}$. Now, by construction,
$f|_{\D_{1}}$ is a solution to the eigenvalue equation $-f''=Ef$,
satisfies the Dirichlet condition at the base of $\D_{1}$, and the
Kirchhoff condition everywhere else, as required. Similarly we choose
$\D_{2}$ to be the right-most decoration of the comb at which $f$
is non-zero. We get that the assertion of the lemma is satisfied for
$\D_{1}$ and $\D_{2}$. It might also be that $\D_{1}=\D_{2}$.
\end{proof}

Thanks to Theorem~\ref{thm:Kotani-eq}, there exists a closed subset
$\sigma\left(H_{\Omega}\right)\subset\R$ such that $\sigma\left(H_{\omega}\right)=\sigma\left(H_{\Omega}\right)$
for all $\omega\in\Omega$. We now prove
\begin{thm}
\label{thm:Generic-assumption}There exists a Baire-generic set $A$
of edge lengths for which $\B\cap\sigma\left(H_{\Omega}\right)=\varnothing$
for all $\vec{\ell}\in A$.
\end{thm}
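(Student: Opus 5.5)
The plan is to reduce the condition $\B\cap\sigma\left(H_{\Omega}\right)=\varnothing$ to a spectral-disjointness condition on finitely many compact graphs, and then show that this condition is Baire-generic in the edge lengths.

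\textbf{Reduction.} Let $E\in\B\cap\sigma\left(H_{\Omega}\right)$. As explained before Lemma \ref{lem:compact}, there is some $\omega$ for which $E$ is an eigenvalue of $H_{\omega}$ with a compactly supported eigenfunction $f$ that vanishes on the horizontal line. Lemma \ref{lem:compact} then provides a decoration $\D_{a}$ on which $f|_{\D_{a}}$ is a nontrivial Dirichlet eigenfunction at the base vertex $v_{a}$. Moreover, since $f$ vanishes on the horizontal edges, Kirchhoff's condition at $v_{a}$ forces
\[
\sum_{e\sim v_{a},\,e\in\D_{a}}\partial f|_{e}(v_{a})=0.
\]
Hence $E$ is an eigenvalue of the decoration with \emph{both} Dirichlet and Kirchhoff (Neumann-sum) conditions at the base vertex. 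Equivalently, $E$ is a pole-free zero of the Dirichlet-to-Neumann (Titchmarsh) function of $\left(\D_{a},v_{a}\right)$ at an eigenvalue of the Dirichlet problem.

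It therefore suffices to show that, for generic edge lengths and for each of the finitely many $a\in\A$, no eigenvalue of the Dirichlet-at-base problem on $\D_{a}$ has an eigenfunction with vanishing normal-derivative sum at $v_{a}$. In other words, generically the Dirichlet eigenvalues of $\D_{a}$ are all simple, and every Dirichlet eigenfunction has nonzero derivative sum at $v_{a}$. The trivial decoration (a single vertex) contributes nothing and can be ignored.

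\textbf{Genericity.} Fix $a$ and write $\D=\D_{a}$ with base vertex $v$. For each $N$, I would define $U_{N}$ to be the set of length vectors $\vec{\ell}$ such that every Dirichlet eigenvalue $E\le N$ is simple and its eigenfunction satisfies $\partial_{n}f(v)\neq0$. The set $A$ will then be the intersection of the $U_{N}$ over all $N$ (and over all $a$).

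\emph{Openness} of $U_{N}$ follows from the continuous dependence of finitely many eigenvalues and eigenprojections on $\vec{\ell}$. Near a point of $U_{N}$ the eigenvalues stay simple, and the normalized eigenfunction and its boundary derivative move continuously. One needs some care with an eigenvalue crossing the level $N$, which can be handled by using non-strict thresholds or by choosing $N$ outside the spectrum.

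\emph{Density} is the core of the argument, and is where the no-loop assumption enters. I would first use a known generic simplicity result for quantum graphs (Friedlander, or Berkolaiko--Kuchment's genericity theorem) applied to the Dirichlet-at-$v$ problem. Then, to show that $\partial_{n}f(v)\neq0$ can be achieved by a small perturbation, I would use the Hadamard-type derivative formula for a simple eigenvalue $E$ with respect to the length of an edge $e$:
\[
\frac{\partial E}{\partial\ell_{e}}=-\left(|f'|^{2}+E|f|^{2}\right)\text{-type boundary expression on }e.
\]
Alternatively, and more directly, I would compare the Dirichlet and Neumann-type problems at $v$. The condition that $f$ is an eigenfunction with $f(v)=0$ and $\sum\partial f(v)=0$ means that $E$ is simultaneously an eigenvalue of two interlacing problems, namely the Dirichlet problem and the Kirchhoff problem on $\D$. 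Lengthening a single edge $e\sim v$ whose restriction $f|_{e}\not\equiv0$ moves these two eigenvalues at different rates, which breaks the coincidence. Such an edge exists: since there are no loops, an edge at $v$ on which $f$ vanishes identically would force, via unique continuation, $f$ to be supported away from $v$, and this can be excluded generically as well.

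\textbf{Expected obstacle.} The main obstacle is exactly this density step: excluding eigenfunctions that vanish at $v$ together with their derivative sum. Such eigenfunctions are typically supported on cycles, and they can persist under all perturbations when loops are present, which is why loops are excluded. My first attempt would be to invoke the Berkolaiko--Liu / Friedlander genericity result that, for generic lengths, eigenfunctions do not vanish at vertices and the eigenvalues are simple. This would be applied to the graph $\D$ with a pendant edge attached at $v$, which makes the compactly supported eigenfunction into a genuine eigenfunction vanishing at an interior vertex. Because genericity has to hold for all eigenvalues $E\le N$ at once, this step is exactly what the $N$-truncation in the definition of $U_{N}$ is for.
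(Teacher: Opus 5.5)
Your reduction step has a genuine gap. You use the remark before Lemma \ref{lem:compact} to assert that the compactly supported eigenfunction vanishes on the whole horizontal line. From this you conclude that $f|_{\D_a}$ is a Dirichlet eigenfunction of a \emph{single} decoration with zero flux at $v_a$. That vanishing does not follow.

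\emph{Why it fails.} ODE uniqueness only forces $f\equiv 0$ on the horizontal edge through the point where $\Phi_\omega=\vec 0$. At the adjacent base vertex you learn $f(v_a)=0$. But if $f|_{\D_a}$ is a Dirichlet eigenfunction with \emph{nonzero} flux, Kirchhoff's condition forces a nonzero derivative on the next horizontal edge. Such a decoration pins the line to zero at its base while still injecting flux. Eigenfunctions can then live on the horizontal segment between two such decorations.

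\emph{A counterexample from the paper's own comb.} For a dangling edge of length $\ell$, the Dirichlet-at-base eigenfunction $\sin(ky)$ has flux $k\neq 0$. So your single-decoration condition holds for \emph{every} $\ell$, and the intersection of your $U_N$ contains every $\ell$. Now take $\ell=d/2$, where $d$ is the distance between two consecutive teeth at $n_1$ and $n_1+d$, and set $k=\pi/d$, $E=k^2$. Then:
\begin{itemize}
\item $E\in\B_\ell$ by (\ref{eq:B-comb}), since $k\ell=\pi/2$.
\item Take $f=\sin(k(x-n_1))$ on $[n_1,n_1+d]$, and $-\cos(k(\ell-y))$ on each of the two teeth, with $y$ measured from the base.
\item One checks directly that $f$ is continuous, Neumann at the tips, and satisfies Kirchhoff at both bases.
\end{itemize}
So $f$ is a compactly supported eigenfunction, it is nonzero on the line, and $\B\cap\sigma(H_\Omega)\neq\varnothing$. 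Your openness and density discussion, however it is completed, therefore establishes a condition that does not control $\B\cap\sigma(H_\Omega)$.

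\emph{The missing idea.} You need to work with the finite pieces produced by Lemma \ref{lem:compact}, not with single decorations. This is what the paper does:
\begin{itemize}
\item Take the extreme decorations $\D_1,\D_2$ in the support of $f$.
\item Observe that $f$ restricted to $[\D_1,\D_2]$, including the horizontal edges between them, is an eigenfunction of the Kirchhoff Laplacian $H_{[\D_1,\D_2]}$ that vanishes at the base vertices of $\D_1$ and $\D_2$.
\item Exclude this by the no-loop genericity theorem of Berkolaiko--Liu \cite{BerLiu_jmaa17}, which says that generically no eigenfunction vanishes at a vertex.
\item Intersect the resulting generic sets over the countably many subgraphs of the form $[\D_3,\D_4]$.
\end{itemize}
Note that the paper's proof uses only compact support, not vanishing on the line. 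Your closing pendant-edge idea is in this spirit, but it has to be applied to $[\D_1,\D_2]$ rather than to $\D_a$ alone.

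One caution if you carry this out. Inside $[\D_1,\D_2]$ the horizontal lengths are fixed, and repeated decoration types share their lengths. A genericity statement over independent edge lengths therefore does not transfer verbatim to this constrained family of decoration lengths, and that transfer needs its own argument.
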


\begin{rem}
In light of the discussion above, Theorem~\ref{thm:Generic-assumption}
states that Baire-generically (over the edge length set) $\sigma\left(H_{\Omega}\right)$
admits no compactly supported eigenfunctions. We note that a generic
exclusion of compactly supported eigenfunctions was proven for periodic
discrete graphs in \cite{Faust2025,M.Faust2025} (where it is also
referred to as flat bands or eigenvalues of infinite multiplicity).
\end{rem}

\begin{proof}
By the discussion preceding Lemma~\ref{lem:compact}, it suffices
to construct a Baire-generic set of edge lengths such that the corresponding
decorated $\Z$-graphs do not admit compactly supported eigenfunctions.
For a compactly supported eigenfunction $f$ we may apply Lemma~\ref{lem:compact}
and get the existence of decorations $\D_{1},\D_{2}$ as in the lemma.
Denote by $\left[\D_{1},\D_{2}\right]$ the subgraph of $\Gamma_{\omega}$
which lies between $\D_{1},\D_{2}$, including these two decorations.
Denote by $H_{\left[\D_{1},\D_{2}\right]}$ the Laplacian on $\left[\D_{1},\D_{2}\right]$
with Kirchhoff conditions at all vertices. We get that $f$ is an
eigenfunction of $H_{\left[\D_{1},\D_{2}\right]}$ and in addition
vanishes at the base vertices of $\D_{1}$ and $\D_{2}$. Since the
decorations are assumed to contain no loops, applying \cite[thm. 3.6]{BerLiu_jmaa17}
(see also \cite[thm. 3.2]{alon2024generic}) yields that there exists
a Baire-generic set of edge lengths such that no eigenfunction of
$H_{\left[\D_{1},\D_{2}\right]}$ vanishes at vertices. Hence, such
a choice of edge lengths excludes the possibility of an eigenfunction
$f$ as above which is supported on $\left[\D_{1},\D_{2}\right]$.
Now, we apply this argument for all possible (countably many) subgraphs
of $\Gamma_{\omega}$ of the type $\left[\D_{3},\D_{4}\right]$, i.e.,
subgraphs which are bounded between two arbitrary decorations. Take
the intersection of all Baire-generic sets of edge lengths which by
\cite[thm. 3.6]{BerLiu_jmaa17} ensure that no eigenfunction of the
Kirchhoff Laplacian on any of these subgraphs vanishes at a vertex.
This (countable) intersection is a Baire-generic set of edge lengths
which (by Lemma~\ref{lem:compact}) permits no compactly supported
eigenfunction.
\end{proof}

The proof of Theorem \ref{thm:Cantor} now follows immediately:
\begin{proof}[Proof of Theorem \ref{thm:Cantor}]
Theorem \ref{thm:Weaker-generalize-Cantor} for decorated $\Z$-graphs
can be proven by repeating verbatim the proof of Theorem \ref{thm:TMP},
using the adapted Borg-Marchenko result from Proposition \ref{prop:BM-decorated},
under the generic condition on decoration edge lengths assumed in
the theorem (see also Corollary \ref{cor:generic-BM-decorated}).
In this case, we immediately get that $\sigma\left(H_{\Omega}\right)$
is of zero Lebesgue measure. Furthermore, by Theorem \ref{thm:Generic-assumption},
we have that $\B\cap\sigma\left(H_{\Omega}\right)=\varnothing$ under
a Baire-generic choice of the edge lengths (and since the intersection
of generic sets is generic, we can also ensure that the conditions
of Proposition \ref{prop:BM-decorated} still hold). For such generic
choices of the edge lengths, Theorem \ref{thm:Weaker-generalize-Cantor}
then also implies that $\sigma\left(H_{\Omega}\right)$ is a Cantor
set.
\end{proof}

\section{Detailed example -- Sturmian combs\label{sec:Discussion-and-Example}}

To illustrate the results developed in this paper, we now turn to
the concrete example of Sturmian comb graphs. These are the graphs
presented in Example \ref{exa: Sturm-comb} (cf. Figure \ref{fig: TilingGraphs}),
which consist of two possible tiles -- a trivial tile (a horizontal
line) and a nontrivial tile (a star graph with three edges). Following
the notation of the Sturmian sequences, (\ref{eq:sturm}), we denote
the corresponding operators by $\left(H_{\alpha,\theta}\right)^{\theta\in[0,1)}_{\alpha\in[0,1)}$.
By Theorem \ref{thm:TMP}, $\sigma\left(H_{\alpha,\theta}\right)$
is independent of $\theta$, and we thus focus on $H_{\alpha,0}=:H_{\alpha}$.
Concretely, let $\Gamma_{\alpha}$ denote the chain graph on $\Z$
with an attached dangling edge of length $\ell>0$ for each $n\in\Z$
such that $\omega_{\alpha}\left(n\right)=1$, where
\begin{equation}
\omega_{\alpha}\left(n\right)=\chi_{(1-\alpha,1]}\left(n\alpha\text{ mod \ensuremath{1}}\right).\label{eq:Sturmian-0}
\end{equation}

Given $\alpha\notin\Q$, we can define a family of periodic approximations
for $H_{\alpha}$ using the (infinite) continued fraction expansion
for $\alpha$:
\begin{equation}
\alpha=\frac{1}{a_{1}+\frac{1}{a_{2}+...}}.\label{eq:cont-frac}
\end{equation}
By truncating the continued fraction expansion at the $n$-th step,
we obtain a rational number $\alpha_{n}:=\frac{p_{n}}{q_{n}}$, which
through (\ref{eq:Sturmian-0}) corresponds to a periodic quantum graph
operator $H_{\alpha_{n}}$ (see Figure \ref{fig: periodic-approx}).

\begin{figure}[h]
\includegraphics[scale=0.5]{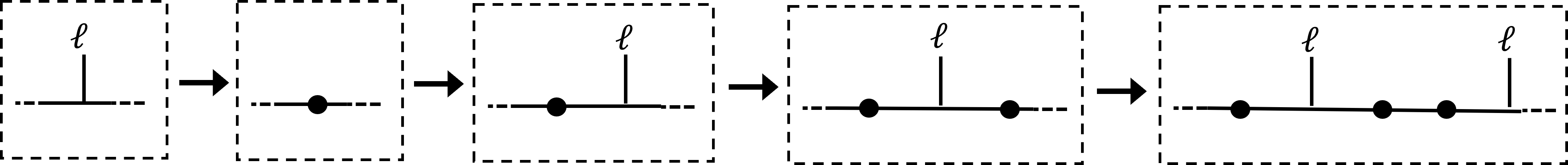}

\caption{Fundamental domains for the sequence of periodic approximations for
the Fibonacci comb with $\alpha=\frac{\sqrt{5}-1}{2}$. The $n$th
fundamental domain contains $F_{n}$ tiles, where $F_{n}$ is the
$n$th Fibonacci number. \label{fig: periodic-approx}}
\end{figure}

To analyze the spectra, we compute the one-step transfer matrix of
the possible tiles. They are:
\begin{equation}
\mathcal{M}_{a}\left(E=k^{2}\right)=\left(\begin{array}{cc}
\cos\left(kL\right) & \frac{1}{k}\sin\left(kL\right)\\
-\frac{1}{k}\sin\left(kL\right) & \cos\left(kL\right)
\end{array}\right),\label{eq:fibo-scatter-2}
\end{equation}
for the trivial tile, and
\begin{align}
 & \mathcal{M}_{b}\left(E=k^{2}\right)\nonumber \\
 & =\left(\begin{array}{cc}
\cos\left(2kL\right)-\frac{1}{2}\sin\left(2kL\right)\tan\left(k\ell\right) & -k\sin\left(2kL\right)-k\cos\left(kL\right)^{2}\tan\left(k\ell\right)\\
\frac{1}{k}\sin\left(2kL\right)-\frac{1}{k}\sin^{2}\left(kL\right)\tan\left(k\ell\right) & \cos\left(2kL\right)-\frac{1}{2}\sin\left(2kL\right)\tan\left(k\ell\right)
\end{array}\right),\label{eq:Fibo-scatter}
\end{align}
for the nontrivial tile. Note that the cocycle $\mathcal{M}\left(k^{2}\right)$
is well-defined for all $k$, except for a discrete subset of energies,
as in Definition \ref{def:Bad-energies}:
\begin{equation}
\B_{\ell}:=\left\{ \left(\frac{\pi/2+\pi m}{\ell}\right)^{2}:m\in\Z\right\} .\label{eq:B-comb}
\end{equation}
To make our analysis more convenient, we assume from now on that these
``forbidden'' values of $k^{2}$ are excluded from the spectrum,
similar to the Baire-generic assumption in Theorem \ref{thm:Generic-assumption}:
\begin{assumption}
\label{assu:bad-energies}$\sigma\left(H_{\alpha}\right)\cap\B_{\ell}=\varnothing$.
\end{assumption}

Under this generic assumption, Theorem \ref{thm:TMP} ensures that
$\sigma\left(H_{\alpha}\right)$ is of zero Lebesgue measure. In addition,
the standard Gordon lemma (see, e.g., \cite[prop. 2]{Suetoe_cmp87})
also guarantees that $\sigma\left(H_{\alpha}\right)$ contains no
eigenvalues, and so the spectral measures are purely singular continuous
and the spectrum is a generalized Cantor set.

On the other hand, if Assumption~\ref{assu:bad-energies} is not
satisfied, then one can show that $\sigma\left(H_{\alpha}\right)$
will in fact contain isolated eigenvalues of infinite multiplicity
at the spectral points where $\mw$ does not exist (and in particular,
will not be a Cantor set). In this case, the integrated density of
states is discontinuous at these spectral points, similarly to the
case of the Penrose and Ammann-Beenker tilings studied in \cite{DamEmbFilMei_exp23}.
In our parallel work, \cite{BanSof_GLT2026} (see also \cite{Bandb}),
about the IDS gap labels we also examine the Sturmian comb example.
We completely characterize there for which energies and tiles the
infinite multiplicity eigenvalues appear, and compute the associated
jump discontinuities of the IDS explicitly.

We return to the periodic approximants which are defined via the rational
approximants $\alpha_{n}=\frac{p_{n}}{q_{n}}$, with the corresponding
graphs $\Gamma_{\alpha_{n}}$and operators $H_{\alpha_{n}}$ (as in
Figure~\ref{fig: periodic-approx}). With some careful analysis of
the corresponding transfer matrices and applying the methods presented
in \cite{Bellissard1989,Suetoe_cmp87} and the result in \cite[thm. 2.5.1]{Beckus2016a},
we get the following:
\begin{thm}
\label{thm:Per-approx}Denoting $\Sigma_{\alpha_{n}}:=\sigma\left(H_{\alpha_{n}}\right)\cup\sigma\left(H_{\alpha_{n+1}}\right)$,
we have 
\begin{equation}
\sigma\left(H_{\alpha}\right)=\bigcap_{n\in\N}\Sigma_{\alpha_{n}}\label{eq:approx1}
\end{equation}
and 
\begin{equation}
\Sigma_{\alpha_{n}}\rightarrow\sigma\left(H_{\alpha}\right)\label{eq:Hausdorff-conv}
\end{equation}
in the Hausdorff topology.
\end{thm}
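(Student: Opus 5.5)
We follow the Bellissard--Iochum--Scoppola--Testard / Süt\H{o} strategy, adapted to the comb cocycle (\ref{eq:fibo-scatter-2})--(\ref{eq:Fibo-scatter}). Restrict to energies $E=k^{2}\notin\B_{\ell}$; by Assumption~\ref{assu:bad-energies} this excludes no point of $\sigma(H_{\alpha})$. Since $\B_{\ell}$ is discrete, it suffices to treat compact energy windows avoiding $\B_{\ell}$.

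\textbf{Step 1: transfer matrices and traces.} Let $M_{n}(E)$ be the transfer matrix over one period $q_{n}$ of the approximant $\omega_{\alpha_{n}}$. The Sturmian words $s_{n}$ of length $q_{n}$ satisfy the standard recursion $s_{n+1}=s_{n}^{a_{n+1}}s_{n-1}$, with suitable initial words built from the two tiles. The matrices therefore obey $M_{n+1}=M_{n-1}M_{n}^{a_{n+1}}$.

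Floquet--Bloch theory for the periodic quantum graph $\Gamma_{\alpha_{n}}$, valid off $\B_{\ell}$ because the fundamental domain is attached through single vertices, gives
\begin{equation*}
\sigma(H_{\alpha_{n}})\setminus\B_{\ell}=\set{E\notin\B_{\ell}}{\left|x_{n}(E)\right|\le2},
\end{equation*}
where $x_{n}:=\mathrm{tr}M_{n}$. The Cayley--Hamilton theorem in $SL(2)$ then gives Chebyshev-type trace recursions, as in \cite{Bellissard1989}, together with a conserved Fricke--Vogt quantity
\begin{equation*}
I(E)=x_{n+1}^{2}+x_{n}^{2}+x_{n-1,n}^{2}-x_{n+1}x_{n}x_{n-1,n}-4,
\end{equation*}
where $x_{n-1,n}=\mathrm{tr}(M_{n-1}M_{n})$. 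We would compute $I(E)$ explicitly from (\ref{eq:fibo-scatter-2})--(\ref{eq:Fibo-scatter}). The only property needed is that $I$ is bounded on compact windows away from $\B_{\ell}$; this holds because the entries of the matrices are.

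\textbf{Step 2: the inclusion $\sigma(H_{\alpha})\subset\bigcap_{n}\Sigma_{\alpha_{n}}$.} We reproduce Süt\H{o}'s dichotomy \cite{Suetoe_cmp87}. If $E\notin\Sigma_{\alpha_{N}}$ for some $N$, then $|x_{N}|,|x_{N+1}|>2$. Invariance of $I$ and the recursion then force $|x_{n}|\to\infty$ super-exponentially, hence $L_{\Omega}(E)>0$. Theorem~\ref{thm:Kotani-eq} now gives $E\notin\sigma(H_{\alpha})$.

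Conversely, suppose $E\in\Sigma_{\alpha_{n}}$ for all $n$. The same invariant keeps the traces bounded along a subsequence, which bounds $\|M_{n}\|$ polynomially and gives $L_{\Omega}(E)=0$, so $E\in\sigma(H_{\alpha})$. This is the standard characterization $\sigma=\set{E}{(x_{n}(E))\text{ bounded}}$, and it yields (\ref{eq:approx1}). An alternative route to both inclusions is Süt\H{o}'s observation $\Sigma_{\alpha_{n+1}}\subset\Sigma_{\alpha_{n}}$, i.e. the nestedness $\sigma(H_{\alpha_{n+1}})\subset\sigma(H_{\alpha_{n}})\cup\sigma(H_{\alpha_{n-1}})$. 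This follows from the trace map, and we would prove it first.

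\textbf{Step 3: Hausdorff convergence.} We would invoke \cite[thm.~2.5.1]{Beckus2016a}. In $\Omega$, the periodic words $\omega_{\alpha_{n}}$ converge to elements of $\Omega_{\alpha}$. Combined with continuity of the spectrum in the Fell topology for operator families with finite local complexity, this gives that $\sigma(H_{\alpha_{n}})$ converges to $\sigma(H_{\alpha})$ in the Fell/Hausdorff sense on compact energy windows. The hypotheses require checking for quantum graphs: the tiles have bounded geometry and the Hamiltonian is pattern-equivariant. Together with the nestedness from Step 2, decreasing closed sets converge to their intersection, which gives (\ref{eq:Hausdorff-conv}).

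\textbf{Expected main obstacle.} The difficulty lies in Step 1--2 at the singularities of the trace map. The entries of $\mathcal{M}_{b}$ contain $\tan(k\ell)$, which blows up at $\B_{\ell}$, and $I(E)$ is unbounded near these points. Both the escape argument and the boundedness argument must therefore be made locally uniform only on compacts avoiding $\B_{\ell}$. Assumption~\ref{assu:bad-energies}, together with closedness of the spectrum, supplies a positive distance from $\sigma(H_{\alpha})$ to $\B_{\ell}$ on each window. A possible secondary nuisance is that $I$ may vanish at some energies, where the escape criterion needs the $I=0$ case of Süt\H{o}'s argument; that case is still covered by his treatment.
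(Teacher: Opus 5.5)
Your overall architecture is the route the paper indicates: the trace map and Fricke--Vogt invariant in the spirit of S\"ut\H{o} and Bellissard et al., followed by Beckus's continuity theorem. However, the argument you give for $\bigcap_n\Sigma_{\alpha_n}\subset\sigma(H_\alpha)$ is not justified.

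Grant that the whole trace sequence is bounded; boundedness along a subsequence, as you write, would not suffice. Even so, bounded traces do not bound $\|M_n\|$. With $|x_n|\le C$, Cayley--Hamilton and $M_{n+1}=M_{n-1}M_n^{a_{n+1}}$ in general give nothing better than the trivial estimate $\log\|M_n\|=O(q_n)$ (e.g.\ for Fibonacci), which says nothing about $L_\Omega(E)$. Power-law bounds are a separate and delicate theorem (Iochum--Testard for Fibonacci, Damanik--Killip--Lenz under a bounded-density condition on the partial quotients). They do not follow from the invariant and would have to be redone for the comb cocycle. They should not be expected for arbitrary $\alpha$ either, since for $2<|x_n|\le C$ the factor $M_n^{a_{n+1}}$ grows exponentially in $a_{n+1}$.

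What is really needed is upper semicontinuity, and you can get it without norm bounds:
\begin{itemize}
\item If $E\notin\sigma(H_\alpha)\cup\B_\ell$, then $E\in\UH$ by Theorem~\ref{thm:Kotani-eq} and $\NUH=\varnothing$.
\item Uniform hyperbolicity of this locally constant cocycle is witnessed by a strictly invariant cone field that can be chosen to depend only on a finite window of $\omega$. It therefore persists for all $E'$ near $E$, along every sequence whose windows of a fixed length all occur in $\Omega_\alpha$.
\item For large $n$ the periodic word $\omega_{\alpha_n}$ has this property. Hence $|x_n(E')|,|x_{n+1}(E')|>2$, i.e.\ a neighborhood of $E$ misses $\Sigma_{\alpha_n}$ for all large $n$.
\end{itemize}
This is Beckus's theorem in cocycle form, and it spares you from checking its hypotheses for quantum graphs. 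Combined with your first direction ($\sigma(H_\alpha)\subset\Sigma_{\alpha_N}$ for every $N$), it yields (\ref{eq:approx1}) and the convergence away from $\B_\ell$, with no norm bounds and not even nestedness.

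Second, restricting to windows that avoid $\B_\ell$ only controls the left-hand side of (\ref{eq:approx1}). You must also show that for large $n$ the sets $\Sigma_{\alpha_n}$ neither contain nor accumulate at points of $\B_\ell$, where the trace map is undefined and $I$ blows up. This is not automatic. At $E_0=k_0^2\in\B_\ell$ every solution vanishes at the decorated base vertices. So $E_0\in\sigma(H_{\alpha_n})$ (as a flat band) exactly when some distance $d$ between consecutive decorations in $\omega_{\alpha_n}$ satisfies $k_0d\in\pi\Z$. You need two things here:
\begin{itemize}
\item For large $n$ these distances are among those occurring in $\omega_\alpha$, so that Assumption~\ref{assu:bad-energies} rules such $d$ out.
\item A cone argument near $E_0$, uniform in $n$: the jump matrix with $|\tan(k\ell)|\gg1$ pushes directions towards $\phi=0$ and expands by roughly $|\tan(k\ell)\sin(kd)|$.
\end{itemize}

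Finally, say explicitly that what you prove is convergence in the Fell (local Hausdorff) topology. That is all a window-by-window argument gives, and ``decreasing closed sets converge to their intersection'' is false for the Hausdorff metric on unbounded sets. In the $E$ variable nothing stronger seems possible. After conjugating by $\mathrm{diag}(1,\sqrt{E})$, the cocycle depends on $E$ only through $(\sqrt{E},\sqrt{E}\,\ell)$ modulo a lattice. So a point of $\Sigma_{\alpha_n}$ at a fixed $k$-distance from $\sigma(H_\alpha)$ recurs at arbitrarily large $k$, and the metric Hausdorff distance in $E$ should be infinite for every $n$.

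Also, your invariant is misindexed; it should read $x_{n-1}^2+x_n^2+x_{n-1,n}^2-x_{n-1}x_nx_{n-1,n}-4$.
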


The result is interesting in particular as the sets $\Sigma_{\alpha_{n}}$
are not compact (though closed). A detailed analysis and proof appear
in the PhD thesis \cite[theorems 7.3, 7.4]{Sofer2026}. Figure \ref{fig: periodic-approx-2}
demonstrates how the sequence $\Sigma_{\alpha_{n}}$ converges to
the limiting set $\sigma\left(H_{\alpha}\right)$. It also sheds light
on the Cantor set nature of the spectrum, and to it being of zero
measure. Figures \ref{fig: butterfly-1} and \ref{fig: butterfly}
display the fractal nature of the spectrum $\sigma\left(H_{\alpha}\right)$
as a function of $\alpha$; this can be thought of as a version of
the Kohmoto butterfly (cf. \cite{band2024dry,KohOon_pla84,KohKadTan_prl83})
for quantum graphs. We also refer to \cite{baradaran2023magnetic},
where a plot similar to \ref{fig: butterfly} was produced for a Harper-like
quantum graph model, showing resemblance to the Hofstadter butterfly
\cite{Hof_prb76}.
\begin{figure}
\includegraphics[scale=0.55]{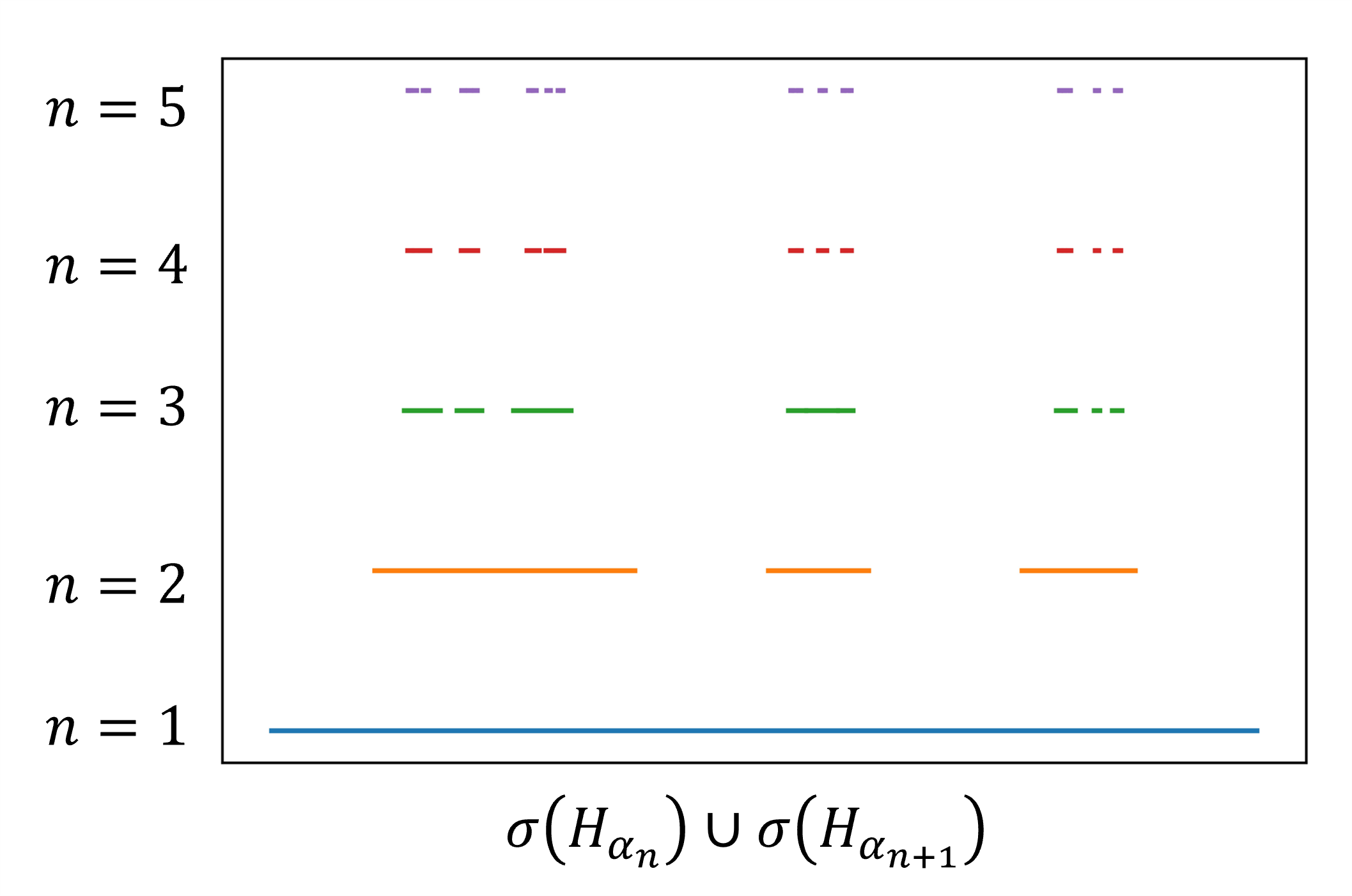}

\caption{Parts of the sequence of periodic approximations $\sigma\left(H_{\alpha_{n}}\right)\cup\sigma\left(H_{\alpha_{n+1}}\right)$
for the spectrum $\sigma\left(H_{\alpha}\right)$ of the Fibonacci
comb. \label{fig: periodic-approx-2}}
\end{figure}

\begin{figure}
\includegraphics[scale=0.75]{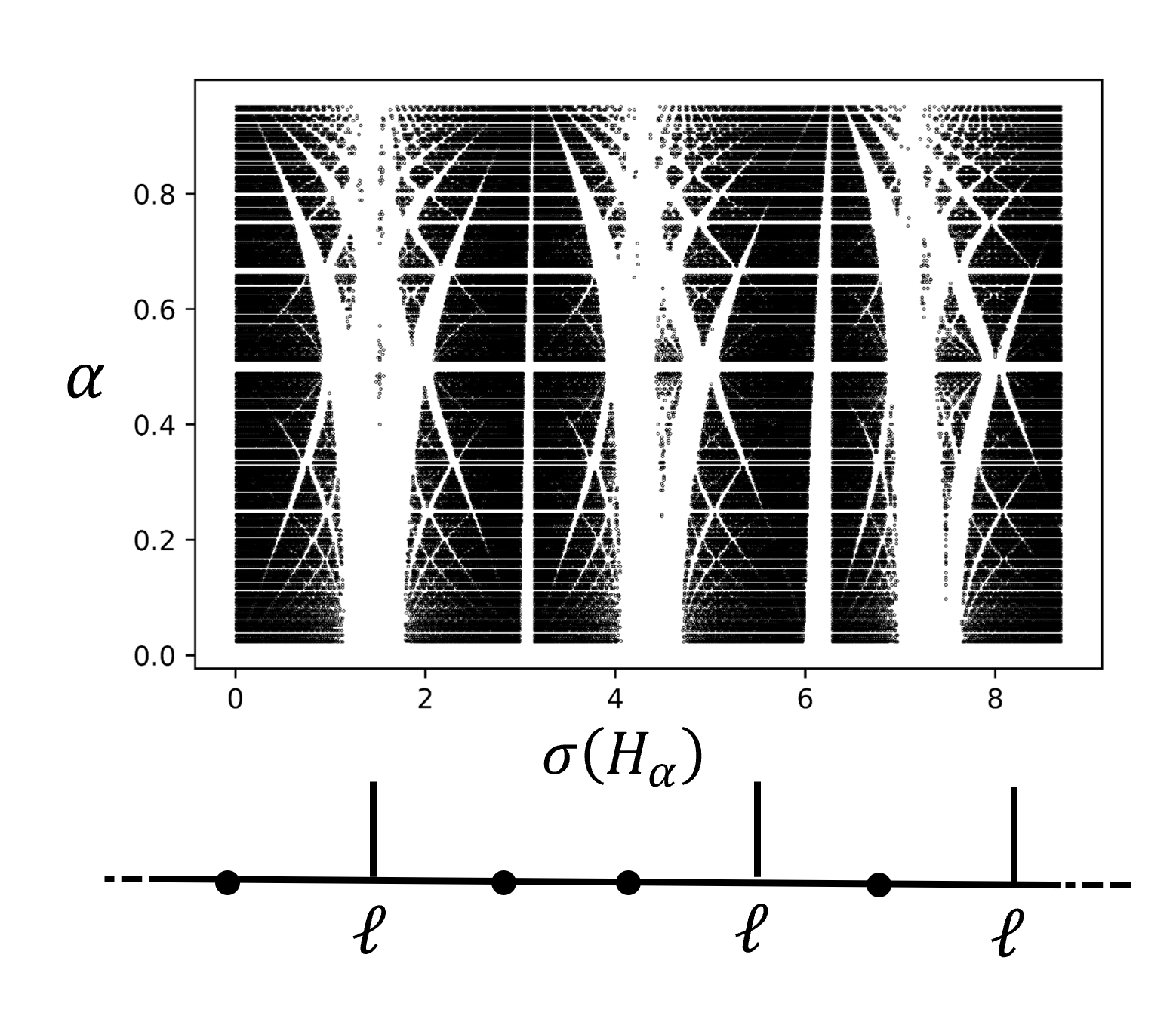}

\caption{The \textquotedblleft Kohmoto butterfly\textquotedblright{} for Sturmian
combs, showing the fractal structure of $\sigma\left(H_{\alpha}\right)$
changing gradually as the energy increases. \label{fig: butterfly}}
\end{figure}

\bibliographystyle{alpha}
\bibliography{GlobalBib_240308}

\end{document}